\newtheorem{thm}{Theorem}[section]
\newtheorem{prop}[thm]{Proposition}
\newtheorem{lem}[thm]{Lemma}
\newtheorem{defn}[thm]{Definition}
\newtheorem{assumption}[thm]{Assumption}
\newtheorem{remark}{Remark}
\tikzset{my dbl/.style={double,double distance=2pt}}
\title{Universality of AMP via Tree Pairings}
\author{David Kogan}
\address{Department of Mathematics, Yale University, New Haven, CT 06511, USA}
\email{d.kogan@yale.edu}
\date{September 2025}
\begin{document}

\begin{abstract}
We prove universality for Approximate Message Passing (AMP) with polynomial nonlinearities applied to symmetric sub-Gaussian matrices $A\in\mathbb R^{N\times N}$.  
Our approach is combinatorial: we represent AMP iterates as sums over trees and define a Wick pairing algebra that counts the number of valid row-wise pairings of edges. The number of such pairings coincides with the trees contribution to the state evolution formulas.
This algebra works for non-Gaussian entries.  
For polynomial nonlinearities of degree at most $D$, we show that the moments of AMP iterates match their state evolution predictions for $t \lesssim \frac{\log N}{D\log D}$ iterations. The proof controls all ``excess'' trees via explicit enumeration bounds, showing non ``Wick-paired" contributions vanish in the large-$N$ limit.  
The same framework should apply, with some modifications, to spiked AMP and tensor AMP models.
\end{abstract}
\maketitle

\section{Introduction}

Approximate Message Passing (AMP) algorithms are iterative methods for solving 
high-dimensional inference problems involving random matrices. 
They were introduced in \cite{donoho2009message}.
In certain random settings, their performance can be predicted exactly by a simple 
deterministic formula called \emph{state evolution} (SE), first proved for Gaussian 
matrices by \cite{bayati2011dynamics, bolthausen2014iterative} using the Gaussian conditioning technique. The results have been extended to more general algorithms \cite{chen2021universality, montanari2021estimationlowrankmatricesapproximate, dudeja2023universality, wang2024universality} and to algorithms where the number of iterations $t$ is allowed to grow with the sample size $N$, \cite{rush2018finite, li2023approximate, han2025entrywise}.

The combinatorial approach to state evolution has been studied in
\cite{bayati2015universality, jones2024fourieranalysisiterativealgorithms}, in which AMP algorithms are analyzed 
via tree-structured diagrams representing the moments of the iterates. This approach is similar to the moment method for studying the eigenvalues of random matrices \cite{furedi1981eigenvalues,bai1993limit}.
In this work, we continue in this combinatorial direction and develop a 
\emph{Wick pairing algebra} on rooted trees that captures the moment expansions of AMP 
in both the unspiked and spiked Wigner models. In contrast to \cite{jones2024fourieranalysisiterativealgorithms}, we choose to work with the standard monomial basis rather than the Hermite basis, as it allows us to control the error terms with greater precision, which is relevant when the number of iterations grows with $N$. Using the combinatorial approach, we are able to show that state evolution formulas hold for polynomial nonlinearities of degree at most $D$ for $t\lesssim \log N/(D\log D)$ iterations. $D$ is allowed to depend on $N$, and the result works for sub-Gaussian matrices.

We show that, for any fixed number of iterations and polynomial nonlinearities, 
the Wick moment functional coincides with the state evolution prediction on the 
$\Delta=0$ trees, i.e., those in which every edge label $(i,j)$ appears exactly 
twice and the two copies occur in the same generation, giving a perfect row-wise 
Wick pairing. The main advantage of this approach is that it allows one to diagrammatically track the contributions of individual terms in the moment expansion, organized by 
their underlying tree structure. The tree moment algebra allows one to see the exact contribution from any tree structured moment in the algorithm to the state evolution formula as $N\to\infty$. There is no assumption of rotation invariance on the underlying matrix ensemble, so the analysis applies to general sub-Gaussian models beyond the Gaussian orthogonally invariant case. To show that the approach works for $t\lesssim \log N/(D\log D)$ requires a careful analysis of error terms arising from $\Delta>0$ (non-perfectly paired) trees. The key idea is that we can group trees based on their non-perfectly Wick paired subtrees. We remark that bound $t\lesssim \log N/(D\log D)$ is sharp for this setting and we construct counterexamples to the state evolution predictions when $t$ exceeds this threshold, i.e., when $\log N/(D\log D)\lesssim t\lesssim \log N/\log D$. (See section \ref{sec:counter_example})

The Wick pairing algebra developed here is not restricted to the symmetric Wigner case.  
In particular, the same combinatorial structure should naturally extend to spiked AMP models, rank-$R$ AMP, where the signal component is a sum of $R$ rank-one terms, and to tensor AMP models. This can be done by tracking whether each edge comes from the signal or noise matrix.

\subsection*{Contributions}

\noindent\textbf{Organization.}
Section~\ref{sec:tree-expansion} develops the tree expansion and $\Delta$ parameter.
Section~\ref{sec:wick-algebra} defines the Wick pairing algebra and proves its equivalence
to state evolution formulas for finite $t$. Section~\ref{sec:universality} gives the universality proof for
polynomial AMP for $t\lesssim \log N/(D\log D)$ iterations. Section~\ref{sec:spiked} explains how the results could extend to a spiked AMP model, although the proof is not provided.

\smallskip

We now introduce the setting of our analysis. 
Following \cite{bayati2015universality}, we have the following definition:

\begin{defn}[AMP Instance] \label{defn:amp_instance}
An \textbf{AMP instance} is the data tuple $(A, \mathcal F, x^0, D)_N$ where:
\begin{itemize}
    \item[(1)] $A \in \mathbb{R}^{N \times N}$ is a symmetric matrix with zero diagonal, i.e., $A_{ii} = 0$ for all $i \in [N]$. The off-diagonal entries $A_{ij}$ are independent, mean-zero sub-Gaussian random variables satisfying
    \[
    \log \mathbb{E}[e^{\lambda A_{ij}}] \leq \frac{C\lambda^2}{2N^2}, \quad \text{and} \quad \mathbb{E}[A_{ij}^2] = \frac{1}{N}.
    \]
    $C$ is independent of $N$
    \item[(2)] $\mathcal {F} = \{f_t(\cdot) :\ t \in \mathbb{Z}_{\ge 0} \}$ is a collection of time-indexed polynomial functions, where each $f_t(\cdot): \mathbb{R} \to \mathbb{R}$ is a polynomial of degree at most $D$. Assume $f_0(x)=x$.

\item[(3)] The initialization \(x^0=(x_i^0)_{1\le i\le N}\) has independent sub-Gaussian entries,
independent of \(A\), variance \(\tau_0\), and moments satisfying
\[
\mathbb{E}|x_i^0|^{\,p}\le (Kp)^{p/2}.
\]
Note that we can initialize $x^0=\textbf {1}_N$ (i.e., the vector of all ones in $\mathbb R^N$).
\end{itemize}
\end{defn}

Let $(A, \mathcal F, x^0, D)_N$ be an AMP instance. Then we define the AMP iterates recursively by,
\[
x^{t+1}_i = \sum_{j\in [N]}A_{ij} f_t(x^t_j) - \sum_{j=1}^N A_{ij}^2f'(x_j^t)f_{t-1}(x_i^{t-1})\]
With the convention that $f_{-1}\equiv 0$. The appearance of the second term called the ``Onsager Term" cancels certain correlations between iterates. The behavior of the AMP algorithm can be predicted by a deterministic scalar recursion called the state evolution (SE). In the limit as $N\to\infty$ the finite moments of the iterates $x_i^t$ agree with the finite moments of a Gaussian with variance $\tau_t^2$, where the sequence $\{\tau_t^2\}$ can be defined recursively by
\[\tau_0 = \mathbb E(x_i^0)^2,\quad  \tau_{t+1}^2 = \mathbb{E}[f_t(\tau_tZ)^2], \quad Z\sim \mathcal{N}(0, 1)\]

In addition to proving universality, for fixed iterations, we also analyze iterations on the order of $t=O(\log N)$. In this setting, we need an additional assumption.

\begin{assumption} \label{assump:bound}
For a given horizon \(t\), assume that all state evolution variances 
\(\{\tau_s^2\}_{s=0}^t\) and polynomial coefficients are uniformly bounded: there 
exists a constant \(M\ge 1\), independent of \(N\) and \(D\), such that
\[
\tau_s^2 \le M,\qquad |c_{s,d}|\le M
\]
for every \(s\in\{0,\dots,t\}\) and monomial coefficient \(c_{s,d}\) of \(f_s\) where $f_s(z)=\sum_{d=0}^Dc_{s, d}z^d$.

\noindent
This assumption is satisfied, for example, if we rescale the polynomials \(f_s\) so 
that the state evolution variances remain \(O(1)\).
\end{assumption}

The following is our main theorem.

\begin{thm}\label{thm:State_evolution_approximation_polynomial}

Consider an AMP instance $(A, \mathcal F, x^0, D)_N$ as in Definition \ref{defn:amp_instance}. Let \(\{\tau_t^2\}\) be the state-evolution variance sequence defined by $
\tau_0^2 = \mathbb{E}\big[(x_i^0)^2\big], 
\tau_{t+1}^2 = \mathbb{E}[f_t(\tau_t Z)^2], Z\sim\mathcal{N}(0,1)$.

Fix a sufficiently large constant \(K_1>20\), and define
\[
C_D := K_1D\log(\max\{D,M\}).
\]
Suppose Assumption~\ref{assump:bound} holds. Then, for all integers $t \le \frac{\log N}{C_D}$
, every \(i\in[N]\), and every fixed integer \(1\le m\le \log\log N\) and sufficiently large $N$,
\[
\left|\mathbb{E}\big[(x_i^t)^{m}\big] - \mathbb{E}\big[(y_i^t)^{m}\big]\right|
\le N^{-1/2 + B/K_1},
\]
where \(y_i^t\sim\mathcal{N}(0,\tau_t^2)\). Here $B$ is some absolute constant. Note that when $m$ is odd $\mathbb{E}\big[(y_i^t)^{m}\big]=0$.
\end{thm}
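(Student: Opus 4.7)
The plan is to expand $(x_i^t)^m$ into a weighted sum over rooted labeled trees via the tree representation of Section~\ref{sec:tree-expansion}, identify the main term using the Wick pairing algebra of Section~\ref{sec:wick-algebra}, and control the error by an enumeration bound tailored to the $\Delta$-stratification.

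First, I would iterate the AMP recursion $t$ times so that each $x_i^t$ becomes a sum over rooted depth-$t$ trees with root label $i$: every edge carries a free index $j\in[N]$ and a factor $A_{i_{\text{parent}},j}$ (or $A_{ij}^2$ for Onsager edges), and every internal vertex contributes a monomial coefficient $c_{s,d}$ of $f_s$. The $m$-th power unfolds to an $m$-forest sharing root $i$. Taking expectations and using the independence and zero-mean of the off-diagonal $A_{ij}$ forces the matrix factors to pair according to some edge partition $\pi$, reducing the moment to a combinatorial sum over pairs $(T_1,\dots,T_m;\pi)$ with weights given by the sub-Gaussian moments and the $c_{s,d}$.

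Second, I would stratify this sum by the parameter $\Delta$. By Section~\ref{sec:wick-algebra}, the $\Delta=0$ stratum, in which each edge label appears exactly twice with both copies in the same generation, evaluates to exactly $\mathbb{E}(y_i^t)^m$; here the Onsager correction cancels the unwanted same-branch self-pairings. Every $\Delta>0$ configuration involves either an extra coincidence that costs a factor of $N^{-1/2}$ through $\mathbb{E}A_{ij}^2=1/N$ combined with a lost free index sum, or a higher matrix moment controlled by the sub-Gaussian estimate $\mathbb{E}|A_{ij}|^{2p}\le (Cp/N)^{p}$.

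Third, I would execute the error bound. Since a depth-$t$ tree of out-degree at most $D$ has up to $D^t$ vertices and Assumption~\ref{assump:bound} bounds the product of coefficients by $M^{O(D^t)}$, I would group trees by their maximal non-Wick-paired subtrees so that the enumeration cost is charged locally, per unit of $\Delta$, rather than globally. Balancing the gain $N^{-\Delta/2}$ against a local enumeration cost of order $(DM)^{O(D\Delta)}$, together with the slow growth $m\le\log\log N$, yields the requirement $D^t\log D\lesssim \log N$, equivalently $t\lesssim \log N/(D\log D)$, and an overall error of size $N^{-1/2+B/K_1}$ for some absolute constant $B$ tied to the sub-Gaussian constant $C$ and to $K_1$.

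The main obstacle is precisely the sharpness of this enumeration. A naive union bound over all labeled trees is doubly exponential in $t$ and destroys the $N^{-1/2}$ gain per excess coincidence immediately; extracting the factor $D\log D$ in the denominator of the iteration threshold, rather than a cruder $D^2$ or $D^D$, requires charging the combinatorial cost only against the non-paired subtree rooted at each excess coincidence while the Wick-paired remainder contributes only state-evolution weights. This local/global separation in the bookkeeping, rather than any single analytic inequality, is the technically delicate part, and it is what ultimately determines the sharp threshold (as evidenced by the counterexamples of Section~\ref{sec:counter_example}).
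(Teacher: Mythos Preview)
Your overall strategy is the paper's: expand $(x_i^t)^m$ over labeled monomial trees, stratify by the excess $\Delta$, identify the $\Delta=0$ stratum with the state-evolution moment via the Wick algebra, and control $\Delta>0$ by grouping trees according to their non-Wick-paired (``bad'') subtree while the Wick-paired remainder contributes only state-evolution weights. This is exactly the decomposition of Definitions~\ref{defn:bad_subtree}--\ref{prop:bad_subtrees}, and your last paragraph correctly names the local/global separation as the heart of the matter.

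The genuine gap is in the size of the bad subtree and hence in your balance. The non-Wick-paired subtree attached to a unit of excess is \emph{not} of size $O(D)$: by Definition~\ref{defn:bad_subtree} each of the $O(\Delta)$ bad vertices drags in its entire path to the root (length up to $t$), together with the siblings of every vertex on that path, so the bad subtree has $O(\Delta t D)$ vertices (this is Step~1--2 of Proposition~\ref{prop:num_bad_subtrees}). The enumeration cost is therefore of order $D^{O(\Delta t D)}\cdot(\Delta D)^{O(\Delta D)}$, not $(DM)^{O(D\Delta)}$; balancing this against the gain $N^{-\Delta}$ is what forces $tD\log D\lesssim \log N$. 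Your stated enumeration carries no $t$-dependence at all, so it cannot produce any iteration threshold, and the condition you write, $D^t\log D\lesssim\log N$, is not equivalent to $t\lesssim\log N/(D\log D)$: it would instead give $t\lesssim(\log\log N)/\log D$, which is far too restrictive. The missing idea is that ``local'' here is local only in width, not in depth --- each coincidence must be traced all the way back to the root before the Wick-paired remainder can be factored off.
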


\begin{thm}
\label{thm:convergence_amp_probability}
Fix $t\ge 0$ and $m\ge 1$ in the regime of Theorem~\ref{thm:State_evolution_approximation_polynomial}. Define
\[
M_{t,m}^{(N)} := \frac1N\sum_{i=1}^N (x_i^t)^m,
\qquad
\mu_{t,m} := \mathbb E[y_t^m],\quad y_t\sim\mathcal N(0,\tau_t^2).
\]
Then $M_{t,m}^{(N)} \to \mu_{t,m}$ in probability as $N\to\infty$.
\end{thm}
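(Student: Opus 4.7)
The plan is a second moment / Chebyshev argument bootstrapped from Theorem~\ref{thm:State_evolution_approximation_polynomial}. Applying that theorem entrywise,
\[
\bigl|\,\mathbb{E}[M_{t,m}^{(N)}] - \mu_{t,m}\,\bigr|
= \Bigl|\tfrac{1}{N}\sum_{i=1}^N \mathbb{E}[(x_i^t)^m] - \mu_{t,m}\Bigr|
\le N^{-1/2 + B/K_1}\xrightarrow{N\to\infty}0,
\]
so the mean already converges. By Chebyshev's inequality it then suffices to show $\mathrm{Var}(M_{t,m}^{(N)})\to 0$, because then $M_{t,m}^{(N)}\to\mu_{t,m}$ in $L^2$ and hence in probability.

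Expanding the variance,
\[
\mathrm{Var}(M_{t,m}^{(N)}) = \frac{1}{N^2}\sum_i \mathrm{Var}((x_i^t)^m) + \frac{1}{N^2}\sum_{i\ne j}\mathrm{Cov}((x_i^t)^m,(x_j^t)^m).
\]
The diagonal sum is $O(1/N)$ because $t,m$ are fixed, so for $N$ large we have $2m\le\log\log N$, and Theorem~\ref{thm:State_evolution_approximation_polynomial} applied with exponent $2m$ yields a uniform bound $\sup_i \mathbb{E}[(x_i^t)^{2m}] = O(1)$, hence $\sup_i \mathrm{Var}((x_i^t)^m)=O(1)$.

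The real work is the off-diagonal sum. I would rerun the tree expansion of Section~\ref{sec:tree-expansion} for the joint functional $(x_i^t)^m(x_j^t)^m$, which becomes a sum over forests of $2m$ labelled trees, $m$ rooted at $i$ and $m$ rooted at $j$. The Wick pairing algebra of Section~\ref{sec:wick-algebra} extends verbatim, with every Wick pairing of edges falling into one of three classes: pairings contained in the $i$-block, pairings contained in the $j$-block, and \emph{cross} pairings that match an edge of an $i$-rooted tree with an edge of a $j$-rooted tree. Configurations with no cross pair factorize exactly into a product of two independent single-root expansions, and their sum equals $\mathbb{E}[(x_i^t)^m]\mathbb{E}[(x_j^t)^m]$, which precisely cancels the product term in the covariance. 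Hence $\mathrm{Cov}((x_i^t)^m,(x_j^t)^m)$ is a sum over forests carrying at least one cross pair.

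The main obstacle, and the only step that requires genuinely new bookkeeping, is showing that these cross-paired contributions are uniformly of order at most $N^{-1+o(1)}$. The heuristic is that each cross pair forces the identification of a vertex in some $i$-rooted tree with a vertex in some $j$-rooted tree; since $i\ne j$ this identification occurs at an internal vertex and eliminates one otherwise free index label (a factor of $N^{-1}$ in the vertex sum), while the product of edge-weight factors $\prod\mathbb{E}[A^2]=\prod N^{-1}$ is unchanged. Combining this with the $\Delta$-based enumeration estimates of Section~\ref{sec:universality} gives $\sup_{i\ne j}|\mathrm{Cov}((x_i^t)^m,(x_j^t)^m)|\le N^{-1+o(1)}$ throughout the regime $t\lesssim\log N/(D\log D)$, so $\mathrm{Var}(M_{t,m}^{(N)}) = O(N^{-1+o(1)})\to 0$ and Chebyshev concludes. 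The delicate point is verifying that the $N^{-1}$ cost of a cross pair survives uniformly across all admissible forest shapes, i.e.\ that no cross-paired forest can compensate through extra labelling freedom; this amounts to adapting the two-root version of the Section~\ref{sec:universality} bookkeeping rather than inventing anything new.
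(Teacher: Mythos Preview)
Your proposal is essentially the same strategy as the paper's proof: Chebyshev plus a tree-expansion bound on the covariance, with the diagonal controlled by Theorem~\ref{thm:State_evolution_approximation_polynomial} at exponent $2m$. The paper implements the covariance step slightly differently, and the difference is worth noting.

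Rather than separately tracking ``cross pairs'' between the $i$-block and the $j$-block, the paper glues the two labeled trees $\mathcal T_1,\mathcal T_2$ under a single fictitious root of degree $2$ (with trivial weight) to form one monomial tree $\widetilde{\mathcal T}$. This reduces the joint moment $\mathbb E[(x_i^t)^m(x_j^t)^m]$ to a single-tree expansion of exactly the type already analyzed, so the entire machinery of Section~\ref{sec:universality} (bad-subtree decomposition, Proposition~\ref{prop:num_bad_subtrees}, etc.) applies without modification. The $\Delta=0$ classes of $\widetilde{\mathcal T}$ are forced to pair within each block (since the two old roots carry distinct labels $i\ne j$), giving the factorized term $\mu_{t,m}^2$; the $\Delta>0$ classes are bounded by $O(N^{-1/2+B/K_1})$ exactly as in the proof of Theorem~\ref{thm:State_evolution_approximation_polynomial}. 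Your approach instead proposes to redo the $\Delta$-bookkeeping in a genuinely two-rooted setting, which works but requires the adaptation you flag at the end.

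One point to tighten: your sentence ``configurations with no cross pair factorize exactly and sum to $\mathbb E[(x_i^t)^m]\mathbb E[(x_j^t)^m]$'' is not literally correct. The factorizing configurations are those whose two blocks share \emph{no labels at all}, not merely those with no Wick cross-pairing; and even then the sum over label-disjoint pairs does not equal the full product $\mathbb E[(x_i^t)^m]\mathbb E[(x_j^t)^m]$, since the product also ranges over label-overlapping pairs $(\phi_1,\phi_2)$. The discrepancy is again an $O(N^{-c})$ correction that the paper's glued-tree $\Delta$-analysis absorbs automatically, but in your formulation it would need its own accounting. Also, the rate you claim for the covariance, $N^{-1+o(1)}$, is stronger than what the paper actually proves ($N^{-1/2+B/K_1}$); either suffices for $\mathrm{Var}\to 0$, but the weaker bound is what falls out of reusing the existing estimates.
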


The Theorem establishes the convergence of the moments of AMP iterates to the predictions from state evolution. In particular, it shows that for $t\leq (\log N)/C_D$ iterations, the algorithm behaves asymptotically Gaussian. Moreover, the polynomial degree $D$ is allowed to grow with $N$; however, in this case, the constant $C_D$ will also grow with $N$. We remark that the proof is not sharp regarding the constant $B$, but it is smaller than $1000$.
\section*{Acknowledgements}
The author is grateful to Zhou Fan for suggesting the problem and for many helpful discussions and insights.

\section{Tree Expansion of AMP Iterates}
\label{sec:tree-expansion}
In \cite{bayati2015universality}, a graphical representation of the AMP iterates via trees was introduced to encode the monomial structure of the iterates. When the functions \( f_t \) are polynomials, each coordinate \( x_i^t \) becomes a polynomial in the matrix entries of \( A \) and the entries of the initialization \( x^0 \). We represent such expansions using trees. We start with a simple illustrative example of the tree expansion of the iterates in AMP.

\subsection*{Example: Tree Expansion}

Let the AMP iteration use a linear nonlinearity at time \( t = 0 \), and a cubic nonlinearity at time \( t = 1 \):
\[
f_0(z) = z, \quad f_1(z) = z^3.
\]
We analyze the AMP iterate \( x_i^2 \). The iteration gives:
\begin{align*}
x_j^1 &= \sum_{k=1}^N A_{jk} x_k^0, \\
x_i^2 &= \sum_{j=1}^N A_{ij} \left( x_j^1 \right)^3 - \sum_{j=1}^N A_{ij}^2\cdot 3(x_j^1)^2\cdot x_i^0
\end{align*}

Substitute \( x_j^1 = \sum_{k} A_{jk} x_k^0 \) into the expression for \( x_i^2 \). We obtain:
\[
x_i^2 = \sum_{j=1}^N A_{ij} \left( \sum_{k=1}^N A_{jk} x_k^0 \right)^3 - \sum_{j=1}^N A_{ij}^2\cdot 3(x_j^1)^2\cdot x_i^0
\]

Expanding the expression gives:
\[
x_i^2 = \sum_{j=1}^N \sum_{k_1,k_2,k_3=1}^N A_{ij} A_{jk_1} A_{jk_2} A_{jk_3} x_{k_1}^0 x_{k_2}^0 x_{k_3}^0 - \sum_{j=1}^N A_{ij}^2\cdot \left(3\sum_{k_1, k_2=1}^N A_{jk_1}A_{jk_2} x_{k_1}^0x_{k_2}^0\right)\cdot x_i^0
\]
Now consider the following tree that represents $x_i^2$:
\begin{center}
\begin{tikzpicture}[
  level distance=1.6cm,
  every node/.style={circle, draw, minimum size=7mm, inner sep=0pt},
  level 1/.style={sibling distance=3cm},
  level 2/.style={sibling distance=3cm}
]
\node {$i$}
  child {node {$j$}
    child {node {$k_1$}}
    child {node {$k_2$}}
    child {node {$k_3$}}
  };

\node[draw=none, fill=none] at (0.5,-0.8) {$A_{ij}$};
\node[draw=none, fill=none] at (-2,-2.2) { $A_{jk_1}$};
\node[draw=none, fill=none] at (0.5,-2.5) {$A_{jk_2}$};
\node[draw=none, fill=none] at (2,-2.2) { $A_{jk_3}$};
\end{tikzpicture}
\end{center}

Each edge in the tree corresponds to a matrix entry of $A$ indexed by the labels of the vertices that are incident to the edge. We take the product over all edges and all leaf vertices, which contribute a factor $x^0_i$ for each leaf vertex with label $i$. If we sum over all tree labelings with integers in $\{1, \dots, N\}$ of the $4$ non-root vertices, we obtain precisely the term $\sum_{j=1}^N \sum_{k_1,k_2,k_3=1}^N A_{ij} A_{jk_1} A_{jk_2} A_{jk_3} x_{k_1}^0 x_{k_2}^0 x_{k_3}^0$.

The Onsager Term can be interpreted as canceling all ``backtracking trees." These are trees in which there exists a sequence of three vertices $u, v, w$ with an edge between $(u, v)$ and $(v, w)$ with $u, v, w$ at three different levels such that $u$ and $w$ have the same label. In our example, the term $\sum_{j=1}^N A_{ij}^2\cdot 3(x_j^1)^2\cdot x_i^0$ exactly cancels those trees where either $k_1=i$, $k_2=i$, or $k_3=i$.

\subsection{Monomial Trees}

We first consider the simplified case when each function $f_t$ is a monomial.

\begin{defn}
Let $(A, \mathcal{F}, x^0, D)_N$ be an AMP instance where each \( f_s \in \mathcal {F} \) is a monomial of the form \( f_s(z) = c_s z^{d_s} \). A \emph{labeled monomial tree} of depth \( t \) is a rooted, labeled tree with the following properties:
\begin{itemize}
    \item The tree has \( t + 1 \) levels of vertices, corresponding to time steps \( 0 \) through \( t \), with the root at level \( t \) and  the leaves at level \( 0 \). There are $t$ edges between the root and the leaves.
    \item The root has $m$ children.
    \item Each vertex is labeled by an index in \( [N] \), representing a coordinate of the AMP iterate.
    \item Each vertex at level \( s \in \{1, \dots, t\} \) has \( d_s \) children, where \( d_s \) is the degree of \( f_s \).
    \item Each edge corresponds to a matrix entry \( A_{ij} \), where the edge connects a parent vertex with label \( i \) to a child vertex with label \( j \). (Note: we will later require that for any edge in a monomial tree $\mathcal T$ between vertices with labels $(i, j)$, there is at least one other edge in the tree between the labels $(i, j)$)
    \item The tree satisfies the \emph{non-backtracking condition}: for any path \( i \to j \to k \) between three distinct vertex levels, we require \( i \neq k \).
\end{itemize}
We will refer to trees without vertex labels as unlabeled monomial trees.
\end{defn}

Now consider the case \( f_t(z) = \sum_{j=0}^D c_{t,d} z^d \) for a general polynomial. In that case, the number of children at each vertex may vary depending on which monomial term contributes. Let $\mathcal{T}_{i,t, m}$ denote the class of all labeled monomial trees that are obtained by selecting a monomial $c_{t, d}z^d$ at each vertex and considering all such possible choices and all possible labelings. We also require that the root has $m$ children and has the label $i$. Then we have the following proposition: 

\begin{lem}\label{lem:tree_expansion}
Let \( f_t(z) = \sum_{j=0}^D c_{t,d} z^d \) and suppose the AMP instance is as above. Then, for each \( i \in [N] \) and each time \( t \geq 0 \), the AMP iterate \( x_i^t \) can be written as
\[
(x_i^t)^{m} = \sum_{\mathcal{T} \in \mathcal{T}_{i,t, m}} \operatorname{Val}(\mathcal{T}),
\]
where each \( \mathcal{T} \in \mathcal{T}_{i,t, m} \) is a labeled monomial tree of depth \( t \) rooted at vertex \( i \) such that the root has $2m$ children, and \( \operatorname{Val}(\mathcal{T}) \) is given by
\[
\operatorname{Val}(\mathcal{T}) = \prod_{v \in V(\mathcal{T})} c_v \cdot \prod_{(u, v) \in E(\mathcal{T})} A_{\ell(u) \ell(v)} \cdot \prod_{\ell \in L(\mathcal{T})} x^0_\ell.
\]

Where \( E(\mathcal T) \) is the set of edges in the tree \( \mathcal T \), \(V(\mathcal T)\) is the set of vertices in $\mathcal T$, and \(L(\mathcal T)\) is the set of leaf vertices in $\mathcal T$. The coefficients $c_v$ are given by the coefficient of the monomial applied at the vertex $v$. Here $\ell(u)$ and $\ell(v)$ are the labels of the vertices $u$ and $v$.
\end{lem}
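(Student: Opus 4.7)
The approach is induction on $t$, first verifying the identity for $m=1$ and then extending to general $m$ by a multinomial expansion.

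For the base case $t=0$, the set $\mathcal{T}_{i,0,1}$ contains only the one-vertex tree with root label $i$; its value is the single leaf contribution $x_i^0$, matching the iterate. Now suppose $x_j^s = \sum_{\mathcal{T}\in\mathcal{T}_{j,s,1}} \operatorname{Val}(\mathcal{T})$ for every $s\le t$ and every $j\in[N]$. Substituting $f_t(z)=\sum_d c_{t,d}z^d$ into the AMP recursion gives
\[
x_i^{t+1} = \sum_{j,d} c_{t,d} A_{ij} (x_j^t)^d - \sum_j A_{ij}^2 f_t'(x_j^t) f_{t-1}(x_i^{t-1}).
\]
Expand $(x_j^t)^d$ as a $d$-fold product and apply the inductive hypothesis to each factor; prepending the new root $i$ at level $t+1$ via the edge $A_{ij}$ and attaching the coefficient $c_{t,d}$ produces all labeled trees with root $i$ having $d$ children labeled $j$ and non-backtracking subtrees rooted at those children. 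Summing over $d$ and over the child labels $j$ sweeps out all candidate labelings — including those violating the non-backtracking condition at the new root level, namely trees in which a grandchild of $i$ carries the label $i$.

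The key identity is that the Onsager term precisely captures these root-level backtracking contributions. Applying the inductive expansion of $x_j^t$ and isolating the branch $A_{ji}f_{t-1}(x_i^{t-1})$ coming from one child of $j$ being labeled $i$, the remaining $(x_j^t)^{d-1}$ factor accounts for the other children. Combining the multiplicity $d$ for choosing which of the children of $j$ carries the backtracking label, the coefficient $c_{t,d}$, the edge $A_{ij}$ together with $A_{ij}=A_{ji}$, and the algebraic identity $\sum_d d\, c_{t,d}z^{d-1} = f_t'(z)$, produces exactly $\sum_j A_{ij}^2 f_t'(x_j^t) f_{t-1}(x_i^{t-1})$. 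Subtracting the Onsager term therefore eliminates the root-level backtracking, leaving $\sum_{\mathcal{T}\in\mathcal{T}_{i,t+1,1}}\operatorname{Val}(\mathcal{T})$.

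For general $m$, write $(x_i^t)^m = \prod_{\alpha=1}^m x_i^t$ and apply the $m=1$ identity to each factor. The resulting product is a sum over $m$-tuples $(\mathcal{T}_1,\dots,\mathcal{T}_m)$ of trees in $\mathcal{T}_{i,t,1}$, and each $m$-tuple is identified with a single tree in $\mathcal{T}_{i,t,m}$ by gluing the $m$ common roots (all labeled $i$) into a single root with $m$ attached subtrees. Multiplicativity of $\operatorname{Val}$ across disjoint subtrees preserves the product, completing the reduction.

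The main obstacle is the combinatorial bookkeeping of the Onsager cancellation: one must match the symmetry factor from selecting the backtracking child with the monomial coefficient $c_{t,d}$ cleanly to obtain the derivative $f_t'$, and confirm that the internal non-backtracking of the subtrees is preserved under the inductive step so that no unlisted correction terms arise. A secondary subtlety is the treatment of higher-order backtracking (multiple grandchildren simultaneously labeled $i$), whose contributions are of order $A_{ij}^3$ or smaller and either decouple cleanly through the inductive structure or are handled at the level of the error terms developed in later sections.
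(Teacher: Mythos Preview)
The paper does not supply its own proof of this lemma; it defers to \cite{bayati2015universality} with the remark that the expansion ``has been proven \ldots\ by induction on the depth of the tree.'' Your inductive scheme on $t$, followed by root-gluing for general $m$, is therefore exactly the route the paper points to.

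The substantive content is in your last paragraph, and the hedge there is warranted rather than cosmetic. With the Onsager term written as $\sum_j A_{ij}^2 f_t'(x_j^t)\, f_{t-1}(x_i^{t-1})$, the cancellation of root-level backtracking is only leading-order, not exact. Already in the paper's own $f_1(z)=z^3$ example, subtracting the Onsager term over-corrects the configurations with two or three grandchildren equal to $i$: a short computation shows that the full sum minus the Onsager term differs from the non-backtracking sum by the residuals
\[
-3\sum_{j}\sum_{k\neq i} A_{ij}^3 A_{jk}\,(x_i^0)^2 x_k^0 \;-\; 2\sum_j A_{ij}^4 (x_i^0)^3.
\]
These carry extra powers of $N^{-1/2}$ and are ultimately harmless (they feed into the $\Delta>0$ error analysis downstream), but they mean your sentence ``the Onsager term precisely captures these root-level backtracking contributions'' is too strong. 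The exact tree identity lives at the level of the associated non-backtracking message-passing recursion; for the AMP iterates as written it holds only up to such higher-order corrections, which is what \cite{bayati2015universality} actually establishes. Your proposal is thus aligned with the paper's cited argument, but the ``precisely'' should be downgraded to ``to leading order,'' and the residuals should be named explicitly rather than left as a possibility.
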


The correctness of this expansion has been proven in \cite{bayati2015universality} by induction on the depth of the tree. Note that we will sometimes need the value of a labeled forest $\mathcal F$ consisting of several labeled trees $\mathcal T_1 ,\dots \mathcal T_k$. In this case, we write $\mathrm{Val}(\mathcal F)=\prod_{j=1}^k\mathrm{Val}(\mathcal T_j)$.

\subsection{Some combinatorial results on trees}
We will now study the combinatorics of the monomial trees in the expansion of the AMP iterates. For this, we have the following definition:

\begin{defn}
    Let $\mathcal{T}$ be a labeled monomial tree associated with an AMP instance of depth $t$, where each vertex is labeled by an index in $[N]$. Two such trees $\mathcal{T}_1$ and $\mathcal{T}_2$ are said to be \textbf{isomorphic} if there exists a permutation $\pi \in S_N$ such that applying $\pi$ to every vertex label of $\mathcal{T}_1$ yields $\mathcal{T}_2$ note that the two trees must have the same underlying tree structure (but possibly different labelings of vertices).

    We define a \textbf{tree isomorphism class} to be the equivalence class of labeled monomial trees under this relation. That is, two trees belong to the same class if they are identical up to a relabeling of vertex indices by an element of $S_N$.
\end{defn}

\begin{remark}
Given an unlabeled tree $T$ (possibly with the root labeled), we may describe an 
isomorphism class of labeled trees by specifying which vertices of $T$ 
are identified as having the same label. This identification determines 
the class uniquely, without reference to any particular choice of labels from $[N]$.  

Formally, an isomorphism class corresponds to a partition of the vertex set 
$V(T)$ in which vertices in the same block are assigned the same label.  
The labeled monomial trees belonging to this class are precisely the 
injective labelings of the blocks of this partition by distinct elements of $[N]$ 
(except that the root label is typically fixed in our setting).  
\end{remark}

\begin{assumption}
    We will consider monomial trees with the following property. For any edge in a monomial tree $\mathcal T$ between vertices with labels $(i, j)$ there is at least one other edge in the tree between the labels $(i, j)$. If there are indices $(i, j)$ with only one edge between that pair, then $\mathbb E\textrm{Val}(\mathcal T)=0$ since we can factor $A_{ij}$ out of the expectation and $\mathbb EA_{ij}=0$. Such trees will not contribute to moment calculations.
\end{assumption}

Throughout, we use the following convention:
\begin{itemize}
  \item \(\mathcal T\) or \(\mathcal T_\phi\) for a labeled monomial tree. $\phi$ denotes a labeling of the vertices of an unlabeled tree.
  \item \(T\) for an unlabeled tree shape.
  \item \(\mathcal{I}\) for an isomorphism class of labeled trees arising from a 
        fixed unlabeled tree.
  \item \(T_\mathcal I\) to denote the underlying unlabeled tree of an isomorphism class $\mathcal I$.
\end{itemize}

\begin{defn}
\label{def:excess}
Let \( \mathcal{T} \) be a labeled monomial tree. The \textbf{excess} of \( \mathcal{T} \) is defined as
\[
\Delta(\mathcal{T}) :=  \frac{1}{2} |E(\mathcal{T})|-|V(\mathcal{T})| + 1,
\]
where \( |V(\mathcal{T})| \) denotes the number of distinct vertex labels, and \( |E(\mathcal{T})| \) denotes the number of edges in the tree.

Note that $\Delta(\mathcal{T})$ is fixed on each tree isomorphism class $\mathcal I$, so this definition is well-defined for both individual monomial trees and their isomorphism classes. Throughout, we use the notation \( \Delta(\mathcal{T}) \) or $ \Delta(\mathcal{I}) $ to refer to the excess of either a labeled tree or an isomorphism class. Also note that $+1$ in this definition comes from fixing the root label.
\end{defn}

\begin{defn}
\label{def:standard_enumeration}
Let \( \mathcal{T} \) be a labeled monomial tree of depth \( t \), and suppose the tree is organized into generations (levels) \( 0, 1, \dots, t \), where the root lies at level \( t \) and the leaves at level \( 0 \). For each level \( s = 1, \dots, t \), let \( n(s) \) denote the number of edges in generation \( s \) (i.e., edges connecting vertices at level \( s \) to level \( s-1 \)). Label the edges in generation \( s \) as
\(
e_1(s), e_2(s), \dots, e_{n(s)}(s)
\) such that for any \( i < j \), the parent of edge \( e_i(s) \) appears earlier (i.e., has a smaller index) than the parent of \( e_j(s) \).

The \textbf{standard ordering of edges} in \( \mathcal{T} \) is then the sequence:
\[
e_1(1), e_2(1), \dots, e_{n(1)}(1),\ e_1(2), \dots, e_{n(2)}(2),\ \dots,\ e_1(t), \dots, e_{n(t)}(t),
\]
that is, edges are ordered generation-by-generation from bottom to top, and within each generation according to the rule above.
\end{defn}

In this section, we will prove several results about labeled monomial trees; however, there are analogous results for the isomorphism classes of those trees. Similar to \cite{bai1993limit}, we define several types of edges.

\begin{defn}
\label{def:edge_types}
Following the standard ordering of the edges in a labeled monomial tree \( \mathcal{T} \) (as in Definition~\ref{def:standard_enumeration}), we classify each edge into one of three types based on the occurrence of its vertex label pair. Assume all edges are directed toward the root. Let each edge be represented by an ordered pair of vertex labels \( (i, j) \), where \( i \) is the parent and \( j \) is the child.

\begin{itemize}
    \item A \textbf{\( \mathbf{T_1} \)} edge is one for which the vertex label \( j \) has not appeared previously in the tree (i.e., this is the first appearance of the label \( j \) in the standard ordering).

    \item A \textbf{\( \mathbf{T_2} \)} edge is the second occurrence of an edge between vertex labels \( (i, j) \), where the first such occurrence was a \( \mathbf{T_1} \) edge.

    \item A \textbf{\( \mathbf{T_3} \)} edge is any remaining edge that is neither \( \mathbf{T_1} \) nor \( \mathbf{T_2} \).
\end{itemize}
\end{defn}

\begin{defn} \label{defn:gb_pair}
    Consider a pair of $\bf{T_1}$ and $\bf{T_2}$ edges between the vertices $(v_1, u_1)$ and $(v_2, u_2)$, respectively. Then, by Definition \ref{def:edge_types}, we assume $v_1$ and $v_2$ have the same label $i$, and $u_1$ and $u_2$ have the same label $j$. We call this pair of edges a \textbf{good pair} if both pairs $v_1, v_2$ and $u_1, u_2$ are in the same generation. Otherwise, we call this pair a $\textbf{bad pair}$.
\end{defn}

\begin{lem} \label{lem:excess_positive_T}
    For any labeled monomial tree $\Delta(\mathcal T)\geq 0$.
\end{lem}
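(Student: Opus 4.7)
The plan is a straightforward quotient-and-count argument that turns the inequality $\Delta(\mathcal T)\ge 0$ into the statement that a connected simple graph on $k$ vertices has at least $k-1$ edges. I would set up the bookkeeping as follows, then push the estimate through in three short steps.

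First, form the \emph{label-quotient multigraph} $G=G(\mathcal T)$ on the vertex set $[N]$ by identifying all tree-vertices of $\mathcal T$ that carry the same label, and keeping every tree-edge as a (multi)edge between the corresponding labels. So the vertex set of $G$ has size $|V(\mathcal T)|$ in the sense of Definition~\ref{def:excess} (distinct labels appearing in $\mathcal T$), and the edge-multiset of $G$ has size $|E(\mathcal T)|$. Let $G^{\mathrm{simp}}$ denote the underlying simple graph of $G$, obtained by collapsing parallel edges.

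Next, I would make two observations. (i) Because $\mathcal T$ is a rooted tree and hence connected, its image $G$ is connected; therefore $G^{\mathrm{simp}}$ is a connected simple graph on $|V(\mathcal T)|$ vertices and has at least $|V(\mathcal T)|-1$ edges. (ii) By the standing pairing assumption stated just before the lemma, every edge label $(i,j)$ that appears in $\mathcal T$ appears at least twice; this says precisely that every edge of $G^{\mathrm{simp}}$ lifts to at least two edges of $G$, i.e.\ every edge of $G$ has multiplicity $\ge 2$.

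Combining (i) and (ii) gives
\[
|E(\mathcal T)| \;=\; \sum_{e\in E(G^{\mathrm{simp}})} \mathrm{mult}_G(e) \;\ge\; 2\,|E(G^{\mathrm{simp}})| \;\ge\; 2\bigl(|V(\mathcal T)|-1\bigr),
\]
which rearranges to $\tfrac12 |E(\mathcal T)| - |V(\mathcal T)| + 1 \ge 0$, i.e.\ $\Delta(\mathcal T)\ge 0$.

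There is no serious obstacle here; the only thing to be careful about is to separate clearly the two different uses of ``vertex'': tree-vertices (which need not all be distinct in label) versus distinct labels, which is what Definition~\ref{def:excess} actually counts. Once that is pinned down, the pairing hypothesis supplies the factor of $2$ and the connectedness of the quotient supplies the $-1$, and these are exactly the two ingredients needed.
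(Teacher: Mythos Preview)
Your argument is correct and takes a genuinely different route from the paper's. The paper argues that every non-root label must occur at least twice: if a label $i$ appeared only once at a vertex $v$ with parent label $j$, then the non-backtracking condition forces the edge pair $(i,j)$ to occur only once, contradicting the standing pairing assumption. From ``every non-root label occurs $\ge 2$ times'' and ``number of non-root vertices $=|E(\mathcal T)|$'' they get $|E(\mathcal T)|\ge 2(|V(\mathcal T)|-1)$.

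You instead pass to the label-quotient multigraph and use only two ingredients: connectivity of the quotient (inherited from the tree) gives $|E(G^{\mathrm{simp}})|\ge |V(\mathcal T)|-1$, and the pairing assumption gives multiplicity $\ge 2$ for every simple edge, so $|E(\mathcal T)|\ge 2|E(G^{\mathrm{simp}})|$. This avoids the non-backtracking condition entirely, making the argument slightly more general and more structural. The paper's route, on the other hand, establishes the auxiliary fact ``each non-root label appears at least twice'' along the way, which fits naturally with the later bookkeeping on label multiplicities $N_i$ (e.g.\ in Lemma~\ref{lem:Ni_3}); your approach would need a separate line to recover that fact if it were needed downstream. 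One minor point worth making explicit in your write-up: possible self-loops in $G$ (if a parent and child ever shared a label) are harmless, since they only add to $|E(\mathcal T)|$ without affecting $|E(G^{\mathrm{simp}})|$.
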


\begin{proof}
    We claim that each non-root vertex label must appear at least twice. Suppose there is some label $i$ of a vertex $v$ that appears only once. Let $u$ be the parent vertex of $v$ with label $j$. Then we claim that the edge between labels $(i, j)$ appears only once. Indeed, since $i$ only appears once as a label, the only other possible location for an edge between labels $(i, j)$ would be between vertex $v$ and some other vertex $w$. Then the vertices $u, v, w$ violate the backtracking condition, which is a contradiction. Therefore, we conclude that $|V(\mathcal T)|\leq |E(\mathcal T)|/2+1$.
\end{proof}

\begin{lem}\label{lem:t4_bound}
    The number of $\bf{T_3}$ edges in a labeled monomial tree $\mathcal T$ is bounded by $2\Delta$. 
\end{lem}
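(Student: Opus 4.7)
The plan is a direct counting argument on edge types. Let $a$, $b$, $c$ denote the number of $\mathbf{T_1}$, $\mathbf{T_2}$, $\mathbf{T_3}$ edges of $\mathcal{T}$, so that $a+b+c = |E(\mathcal{T})|$.

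First I would argue $a = |V(\mathcal{T})| - 1$. In the standard ordering, each non-root label is introduced to the tree by exactly one $\mathbf{T_1}$ edge, namely the edge whose child carries that label when it first appears, and conversely every $\mathbf{T_1}$ edge introduces exactly one new label. Combined with the definition $\Delta = \tfrac12|E(\mathcal{T})| - |V(\mathcal{T})| + 1$, this rearranges to $|E(\mathcal{T})| = 2a + 2\Delta$.

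The heart of the argument is to show $b \ge a$, which together with the trivial inequality $b \le a$ (each $\mathbf{T_2}$ edge is injectively paired with the $\mathbf{T_1}$ edge that is its first occurrence) pins down $b = a$. To see $b \ge a$, I would invoke the standing assumption that every edge label pair appears at least twice. Fix any $\mathbf{T_1}$ edge with parent label $i$ and child label $j$: because $j$ is brand new at this point in the ordering, no earlier edge can touch $j$, so this $\mathbf{T_1}$ edge is in fact the first occurrence of the unordered pair $\{i,j\}$ in the tree, not merely the first occurrence of the ordered pair $(i,j)$. The assumption then supplies a second occurrence of $\{i,j\}$, which is by Definition~\ref{def:edge_types} a $\mathbf{T_2}$ edge. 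Distinct $\mathbf{T_1}$ edges have distinct unordered label pairs (two $\mathbf{T_1}$ edges introducing different new children cannot share both endpoint labels, since the other endpoint would have had to be new already), so these $\mathbf{T_2}$ partners are themselves distinct, giving $b \ge a$.

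Combining $a+b+c = 2a+2\Delta$ with $b=a$ yields $c = 2\Delta$, which proves the lemma and shows the bound is in fact tight. There is no real obstacle; the only subtle step is the observation that the novelty of the child label in a $\mathbf{T_1}$ edge promotes ``first occurrence of the ordered pair $(i,j)$'' to ``first occurrence of the unordered pair $\{i,j\}$,'' which is precisely what is needed to apply the standing assumption and produce a matching $\mathbf{T_2}$ partner.
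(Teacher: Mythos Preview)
Your proof is correct and takes essentially the same approach as the paper: both rest on the observation that the $\mathbf{T_1}$ and $\mathbf{T_2}$ edges are in bijection (the paper phrases this as the quotient graph on $\mathbf{T_1}\cup\mathbf{T_2}$ edges being a tree with doubled edges, which is exactly your $a=b$), and then the bound follows immediately from the definition of $\Delta$. Your counting is more explicit about why the bijection holds and in fact yields the exact equality $c=2\Delta$ rather than just the inequality.
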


\begin{proof}
    Consider the subtree $\mathcal T'\subset \mathcal T$ obtained by keeping only $\bf{T_1}$ and $\bf{T_2}$ edges and identifying vertices with the same label. It satisfies $|V(\mathcal T')|=|E(\mathcal T')|/2+1$ since $\mathcal T'$ can be viewed as a tree with edges appearing twice ($|V(\mathcal T')|, |E(\mathcal T')|$ denotes the number of distinct vertex labels and edges in $\mathcal T'$). Then, since the $\bf{T_3}$ edges do not introduce new vertex labels, the number of $\bf{T_3}$ edges is bounded by $2\Delta$.
\end{proof}

\begin{lem}\label{lem:Ni_3}
    Let $N_i$ denote the number of occurrences of label $i$ in a labeled monomial tree $\mathcal T$.  Then $\sum_{i:N_i\geq 3}N_i\leq 6\Delta(\mathcal T)$. Let $M_r$ denote the number of occurrences of the root label, other than the root vertex, then $M_r\leq 2\Delta(\mathcal T)$.
\end{lem}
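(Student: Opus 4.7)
The plan is to prove the two inequalities separately, both via direct counting.

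For the first inequality, I would begin with the identity $\sum_i N_i = |E(\mathcal T)|+1 = 2\Delta(\mathcal T)+2|V(\mathcal T)|-1$, where the middle equality uses that a physical tree has one more vertex than edges, and the last uses the definition of $\Delta$. Partition the distinct labels into $V_1 = \{i : N_i = 1\}$, $V_2 = \{i : N_i = 2\}$, and $V_{\geq 3} = \{i : N_i \geq 3\}$. By Lemma~\ref{lem:excess_positive_T}, only the root label can lie in $V_1$, so $|V_1|\leq 1$. Writing $\sum_i N_i = |V_1| + 2|V_2| + S$ with $S := \sum_{i\in V_{\geq 3}} N_i$ and comparing with the identity yields $S = 2\Delta + |V_1| + 2|V_{\geq 3}| - 1 \leq 2\Delta + 2|V_{\geq 3}|$. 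Since each $i\in V_{\geq 3}$ contributes at least $3$ to $S$, we have $|V_{\geq 3}|\leq S/3$, and the resulting inequality $S \leq 2\Delta + 2S/3$ rearranges to $S\leq 6\Delta$.

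For the second inequality, I would use the edge classification from Definition~\ref{def:edge_types} together with the bound $|T_3|\leq 2\Delta$ from Lemma~\ref{lem:t4_bound}. The key observation is that the root label $r$ is assigned to the root vertex before the standard edge enumeration begins, so $r$ never counts as a \emph{new} label. Consequently no edge whose child bears label $r$ can be $T_1$, since $T_1$ requires the child label to be first-appearing. Because a $T_2$ edge must be the second occurrence of a label pair whose first occurrence was $T_1$, and no first occurrence of a pair $(\cdot, r)$ can be $T_1$, no edge with child label $r$ can be $T_2$ either. Hence every edge with child label $r$ is $T_3$. Each of the $M_r$ non-root vertices bearing label $r$ contributes a distinct physical parent-edge with child label $r$, so these $M_r$ edges are all $T_3$, giving $M_r \leq |T_3| \leq 2\Delta(\mathcal T)$.

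The main subtlety is purely bookkeeping: one must be careful about the convention that the root label is already present before any edge is processed, and check that this forces every edge whose child is the root label into $T_3$ rather than allowing a $T_1/T_2$ pair on label pairs of the form $(\cdot,r)$. Once this is set up, both bounds reduce to short counting arguments built on Lemmas~\ref{lem:excess_positive_T} and~\ref{lem:t4_bound}.
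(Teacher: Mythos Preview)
Your argument for the first inequality is correct and is essentially the paper's own proof, just written out more explicitly: both derive the identity relating $\sum_i N_i$, $|V|$, and $\Delta$, split the labels by multiplicity, and use $|V_{\ge 3}|\le S/3$ to close the bound.

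For the second inequality your route is different from the paper's. The paper simply says ``the second result follows similarly,'' meaning the same counting identity used in part one: writing $\Delta = \tfrac12\sum_{i\neq r:\,N_i\ge 3}N_i - |\{i\neq r:N_i\ge 3\}| + M_r/2$ and observing that the first two terms together are nonnegative gives $M_r\le 2\Delta$ immediately. You instead route through the edge classification and Lemma~\ref{lem:t4_bound}, arguing that every parent-edge of a non-root vertex labeled $r$ must be $\mathbf{T_3}$. This is also correct, and in fact your careful remark about the convention (that the root label counts as already ``appeared'' before any edge is processed) is exactly what makes Lemma~\ref{lem:t4_bound} itself hold---if one instead counted parent labels as appearances, a $\Delta=0$ tree could have $\mathbf{T_3}$ edges, contradicting that lemma. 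So your interpretation is the consistent one. The trade-off: the paper's counting argument is self-contained and avoids any reliance on the edge-type conventions, while your argument makes the connection to $\mathbf{T_3}$ edges explicit, which is conceptually nice since those edges are what drive the later ``bad subtree'' analysis.
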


\begin{proof}
    Follow the standard ordering. Let $|U(\mathcal T)|$ denote the total number of vertices in $\mathcal T$ excluding the root, and $|V(\mathcal T)|$ denote the number of distinct vertex labels in $\mathcal T$. Observe $|E(\mathcal T)|=|U(\mathcal T)|$ where $|E(\mathcal T)|$ is the number of edges in $\mathcal T$. Then $|U(\mathcal T)|=\sum_{i:N_i=2} N_i+\sum_{i:N_i\geq 3}N_i+M_r$ (where we sum over labels $i$ that are not equal to the root label) and $|V(\mathcal T)|=\sum_i 1$. Then $\Delta(\mathcal T)\geq |U(\mathcal T)|/2-|V(\mathcal T)|=\sum_{i:N_i\geq 3}N_i/2-\sum_{i:N_i\geq 3}1$ since 
    $6(\sum_{i:N_i\geq 3}N_i/2-\sum_{i:N_i\geq 3}1)\geq \sum_{i:N_i\geq 3}N_i$ the first result follows. The second result follows similarly.
\end{proof}

\begin{lem} \label{lem:bij_bound}
    Let $b_{ij}$ denote the number of edges between vertices with labels $i$ and $j$. Then $\sum_{b_{ij}:b_{ij}>2}b_{ij}\leq 6\Delta$
\end{lem}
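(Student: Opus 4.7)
The plan is to adapt the counting argument of Lemma \ref{lem:Ni_3} by tracking edge-label multiplicities rather than vertex-label multiplicities. The key inputs are the excess identity $\Delta(\mathcal T)=\tfrac12|E(\mathcal T)|-|V(\mathcal T)|+1$ and the standing Assumption of this section, which guarantees $b_{ij}\ge 2$ whenever $b_{ij}\ge 1$.

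First, I form the contracted graph $G$ whose vertex set is the set of distinct labels appearing in $\mathcal T$ (of size $|V(\mathcal T)|$) and whose edges record, with multiplicity one, each pair $(i,j)$ for which $b_{ij}\ge 1$. Since $\mathcal T$ is connected, $G$ is connected, so the number $E'$ of distinct label-pair edges satisfies $E' \ge |V(\mathcal T)|-1$.

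Second, I split $E' = a+b$ where $a=\#\{(i,j):b_{ij}=2\}$ and $b=\#\{(i,j):b_{ij}>2\}$, and set $S=\sum_{b_{ij}>2} b_{ij}$, so that $|E(\mathcal T)|=2a+S$. Substituting into the excess formula and using $a=E'-b\ge |V(\mathcal T)|-1-b$ gives $\Delta(\mathcal T) \ge \tfrac{S}{2}-b$. Since every summand in $S$ is at least $3$, one has $b\le S/3$, and therefore $\Delta(\mathcal T)\ge S/2 - S/3 = S/6$, which rearranges to the desired bound $S\le 6\Delta(\mathcal T)$.

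The only mildly subtle point is the first step: one needs to verify that identifying vertices by label produces an honestly connected simple graph (after collapsing self-loops and multi-edges), so that $E'\ge |V(\mathcal T)|-1$ is justified. Beyond this the argument is pure algebra; the tree depth, standard ordering, and the distinction between $\bf{T_1}/\bf{T_2}/\bf{T_3}$ edges play no role, paralleling the vertex-count version in Lemma \ref{lem:Ni_3}.
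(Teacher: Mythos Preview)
Your argument is correct. The contracted label graph $G$ is connected (the tree $\mathcal T$ is connected and identifying vertices cannot disconnect it), and the zero-diagonal assumption on $A$ rules out self-loops, so $E'\ge |V(\mathcal T)|-1$ is legitimate. The algebra $\Delta = a + S/2 - |V(\mathcal T)| + 1 \ge S/2 - b \ge S/6$ is clean and gives exactly the claimed bound.

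Your route is genuinely different from the paper's. The paper does not touch the contracted graph at all; instead it observes that whenever $b_{ij}>2$, at most two of those edges can be $\mathbf{T_1}$ or $\mathbf{T_2}$, hence at least $b_{ij}-2\ge 1$ of them are $\mathbf{T_3}$, which yields $b_{ij}\le 3\cdot(\text{number of }\mathbf{T_3}\text{ edges on }(i,j))$; summing and invoking Lemma~\ref{lem:t4_bound} ($\#\mathbf{T_3}\le 2\Delta$) gives $\sum_{b_{ij}>2} b_{ij}\le 6\Delta$. So the paper recycles the $\mathbf{T_1}/\mathbf{T_2}/\mathbf{T_3}$ machinery already set up, keeping the proof to two lines but making it dependent on Lemma~\ref{lem:t4_bound}. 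Your proof is self-contained and bypasses the edge classification entirely, at the cost of introducing the contracted graph and verifying its connectivity. Both arrive at the same constant $6$; yours makes it transparent that the bound is really a statement about multigraph edge-multiplicities versus the spanning-tree skeleton, independent of the standard ordering.
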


\begin{proof}
    For any value $b_{ij}>2$, there must be at least one $\bf{T_3}$ edge between vertices with labels $i$ and $j$. Furthermore, only two edges between vertices with labels $i$ and $j$ are not $\bf{T_3}$. Hence, by Lemma \ref{lem:t4_bound}, we conclude that $\sum_{b_{ij}:b_{ij}>2}b_{ij}\leq 6\Delta$.
\end{proof}

\subsection{Combinatorics of Wick Pairings}
In this section, we describe how the state evolution formula arises from precisely the labeled monomial trees with $\Delta(\mathcal T)=0$. We will use the notation $\ell(v)$ to denote the label of the vertex $v$, and $d(v)$ to denote the distance of the vertex $v$ from the root (the number of edges).

\begin{prop}\label{prop:bad_pair_bound_delta0}
     There are no bad pairs in a tree labeled monomial tree $\mathcal T$ if $\Delta(\mathcal T)=0$.
\end{prop}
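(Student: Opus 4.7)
The plan is to prove the stronger claim that when $\Delta(\mathcal T) = 0$, every label of $\mathcal T$ has all of its occurrences at a single level of the tree. This automatically excludes bad pairs, since by definition a bad pair requires two vertices sharing a label to sit in different generations.

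The starting point is to collect the structural consequences of $\Delta = 0$: combining Lemmas \ref{lem:excess_positive_T}, \ref{lem:t4_bound}, \ref{lem:Ni_3}, and \ref{lem:bij_bound} with the standing assumption that every appearing label-pair edge appears at least twice, one obtains that there are no $\mathbf{T_3}$ edges, every non-root label appears exactly twice (the root label once), and every label-pair $(i, j)$ that occurs in $\mathcal T$ does so in exactly two edges. Thus the edges of $\mathcal T$ partition into $\mathbf{T_1}$-$\mathbf{T_2}$ pairs, and the ``contracted label graph'' (identifying vertices with the same label) is itself a tree rooted at the root label.

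The main argument would then be an induction on the depth $k := t - \operatorname{level}(v)$ from the root. The base case $k = 0$ is trivial because only the root sits at level $t$ and carries the unique root label. For the inductive step, take any vertex $v$ at level $t - k$ with label $\ell$, and let $p$ be its parent at level $t - k + 1$ with label $m$. Applying the inductive hypothesis to $p$, all occurrences of $m$ must live at level $t - k + 1$. The edge $(p, v)$ is one of the two edges in $\mathcal T$ between labels $m$ and $\ell$; call the other $(p', v')$, with $\ell(p') = m$ and $\ell(v') = \ell$. Then $p'$ is another $m$-vertex at level $t - k + 1$, so $v'$ must lie at either level $t - k$ or level $t - k + 2$.

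The step I expect to require the most care is ruling out $v'$ at level $t - k + 2$; I would handle it by reapplying the inductive hypothesis directly to $v'$. If $v'$ sat at a level strictly greater than $t - k$, the hypothesis would force every occurrence of $\ell$ to live at that higher level, contradicting the fact that $v$ is itself an $\ell$-vertex at level $t - k$. A short separate check that $v' \neq v$ --- otherwise $v$ would have two distinct $m$-labeled neighbors at level $t - k + 1$, while its only neighbor at that level is its unique parent, forcing a multi-edge in the tree $\mathcal T$ --- then shows that $v'$ is the second occurrence of $\ell$ and it must lie at level $t - k$. This completes the induction and hence the proposition.
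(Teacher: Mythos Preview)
Your proof is correct and takes a genuinely different route from the paper's. The paper argues by contradiction: assuming a bad pair $e_1=(v_1,u_1)$, $e_2=(v_2,u_2)$ with (say) $d(v_1)<d(v_2)$, it walks up the chain of parent edges $p^0(e_1),p^1(e_1),\dots,p^{k_0}(e_1)$ from $e_1$ to the root and uses the fact that each such edge is $\mathbf{T_1}$ or $\mathbf{T_2}$ (no $\mathbf{T_3}$ edges when $\Delta=0$) to produce a companion path of paired edges $p^0(e_1)^*,\dots,p^{k_0}(e_1)^*$; because $N_i\le 2$ forces these companions to form a single connected path, and because the levels are offset, this companion path must end at a non-root vertex carrying the root label, contradicting $M_r=0$. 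Your argument instead proves directly, by induction on the depth from the root, the stronger statement that every label is confined to a single level; the key step---ruling out the partner vertex $v'$ at level $t-k+2$ by applying the inductive hypothesis to $v'$ itself---is clean and avoids any path-tracing. Both proofs rest on the same preliminary consequences of $\Delta=0$ (no $\mathbf{T_3}$ edges, $N_i=2$ for non-root labels, $M_r=0$, $b_{ij}=2$), but yours packages them into a top-down induction that already delivers the full row-wise pairing structure of Lemma~\ref{lem:delta_0_perfect_pairing}, whereas the paper establishes Proposition~\ref{prop:bad_pair_bound_delta0} and Lemma~\ref{lem:delta_0_perfect_pairing} in two separate steps.
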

\begin{proof}
    Let $e_1=(v_1, u_1)$ and $e_2=(v_2, u_2)$ be a bad pair of edges. By the definition of a bad pair, both edge endpoints are label-equivalent: that is,
    \(
    \ell(v_1) = \ell(v_2) \text{ and } \ell(u_1) = \ell(u_2)
    \). Since the pair is bad, at least one of the following holds, $d(v_1)\ne d(v_2)$ or $d(u_1)\ne d(u_2)$. Where $d$ denotes the distance to the root. Assume WLOG $d(v_1)<d(v_2)$. Let $p^1(e_1)$ denote the parent edge of $e_1$, and $p^k(e_1)$ denote the edge $p^1(p^1(p^1\dots p^1(e_1)))$ where we take the parent $k$ times. Let $p^{k_0}(e_1)$ denote the final parent edge when we reach the root. By Lemma \ref{lem:t4_bound} none of the edges $p^0(e_1), p^1(e_1), \dots, p^{k_0}(e_1)$ are a $\bf{T_3}$ edge, and by Lemma \ref{lem:Ni_3} $M_r=0$, there are no labels $i$ such that $N_i\geq 3$.
    
      Now, suppose $p^0(e_1), p^1(e_1), \dots, p^{k_0}(e_1)$ consists of all $\bf{T_1}$ and $\bf{T_2}$ edges. Then the edges $p^0(e_1), p^1(e_1), \dots, p^{k_0}(e_1)$ form a path that terminates in the root. Each of the edges $p^0(e_1), p^1(e_1), \dots, p^{k_0}(e_1)$ has a corresponding pair $p^0(e_1)^*, p^1(e_1)^*, \dots, p^{k_0}(e_1)^*$, i.e. the corresponding $\bf{T_1}$ or $\bf{T_2}$ edge. If these pairs do not form a single path, then for some $i$ we have $N_i\geq 3$. For example, take $p^0(e_1)$ and $p^0(e_1)^*$. We claim $p^1(e_1)^*$ shares a vertex with $p^0(e_1)^*$. Since $p^0(e_1)$ and $p^1(e_1)$ share a vertex label $j$ the edges $p^0(e_1)^*$ and $p^1(e_1)^*$ also contain the label $j$. Therefore, if $p^0(e_1)^*$ and $p^1(e_1)^*$ do not share a vertex with label $j$ then $N_j\geq 3$. Now, since $d(v_1)<d(v_2)$ and $p^0(e_1)^*, p^1(e_1)^*, \dots, p^{k_0}(e_1)^*$ form a path that terminates in a vertex with the same label as the root, this implies $M_r\geq 1$, which is a contradiction. We conclude that there are no bad pairs.
\end{proof}

\begin{lem}\label{lem:delta_0_perfect_pairing}
    Let $\mathcal{T}$ be a labeled monomial tree. Then $\Delta(\mathcal{T}) = 0$ if and only if $\mathcal{T}$ admits a perfect ``Wick pairing" of edges at each level. That is, the label of any non-root vertex appears exactly twice, and both occurrences are in the same generation. This produces a pairing of edges, row by row. Each pair consists of one $\bf{T_1}$ and one $\bf{T_2}$ edge.
    
      In particular, a Wick Pairing is one with no bad pairs and no $\bf{T_3}$ edges. Note that the Lemma also holds for an isomorphism class of labeled trees.
\end{lem}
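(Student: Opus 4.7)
The plan is to prove the biconditional by assembling the combinatorial bounds just established. The reverse direction is a direct vertex-edge count; the forward direction synthesizes Lemmas \ref{lem:t4_bound}, \ref{lem:Ni_3} and Proposition \ref{prop:bad_pair_bound_delta0}.

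For the forward direction, assume $\Delta(\mathcal T)=0$. I would first invoke Lemma \ref{lem:t4_bound} to conclude that $\mathcal T$ has no $T_3$ edges, so every edge is of type $T_1$ or $T_2$. Next, Lemma \ref{lem:Ni_3} forces $M_r=0$ and $\sum_{i:N_i\ge 3}N_i=0$, so the root label does not recur outside the root and no non-root label appears three or more times. Combined with the observation from the proof of Lemma \ref{lem:excess_positive_T} that every non-root label must occur at least twice, each non-root label appears exactly twice. Proposition \ref{prop:bad_pair_bound_delta0} then excludes bad pairs: any two edges carrying the same label pair must have parents in a common generation and children in a common generation. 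I would then upgrade this to the statement that the two vertices carrying any given non-root label $i$ themselves lie in the same generation. The two parent edges of these vertices both have $i$ as child label and, since $N_i=2$ and no $T_3$ edge exists, they must be each other's Wick partner; the good-pair property then matches their generations. Finally, within each label-matched pair, the edge appearing first in the standard ordering is $T_1$ (the absence of $T_3$ edges forces the earliest occurrence to introduce a fresh child label), and the later one is its $T_2$ partner.

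For the reverse direction, suppose $\mathcal T$ admits a perfect Wick pairing. Each non-root label is then attached to exactly two vertices, so the number of distinct non-root labels equals half the number of non-root vertices. Since $\mathcal T$ is a tree, the non-root vertex count equals $|E(\mathcal T)|$, whence $|V(\mathcal T)| = |E(\mathcal T)|/2 + 1$. Substituting into Definition \ref{def:excess} gives $\Delta(\mathcal T) = \tfrac{1}{2}|E(\mathcal T)| - (\tfrac{1}{2}|E(\mathcal T)|+1) + 1 = 0$.

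The main obstacle is the intermediate step in the forward direction: promoting ``paired edges lie in a common row'' to ``the two occurrences of each non-root label lie in a common row.'' This requires pinpointing the unique Wick partner of the parent edges of the two copies of $i$ (which is well-defined because no $T_3$ edge exists and $N_i=2$) and then invoking the good-pair property. The rest is bookkeeping. The extension to isomorphism classes $\mathcal I$ is immediate, since $\Delta$, the edge-type classification, and the notion of a Wick pairing all depend only on the label partition of the vertex set and are therefore invariant under the $S_N$-action defining $\mathcal I$.
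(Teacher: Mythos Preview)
Your proposal is correct and follows essentially the same approach as the paper: the forward direction combines Lemma~\ref{lem:t4_bound} (no $\mathbf{T_3}$ edges) with Proposition~\ref{prop:bad_pair_bound_delta0} (no bad pairs), and the reverse direction is the direct vertex-edge count giving $|V(\mathcal T)|=|E(\mathcal T)|/2+1$. Your write-up is in fact more careful than the paper's own proof, which simply asserts ``each edge is paired with exactly one other edge in the same level'' without explicitly invoking Lemma~\ref{lem:Ni_3} to force $N_i=2$ for every non-root label, and without spelling out the ``upgrade'' from edge-level pairing to vertex-level same-generation. Your identification of that upgrade as the main obstacle is apt: the argument that the two parent edges of the copies of $i$ must form a $\mathbf{T_1}$--$\mathbf{T_2}$ pair (else one of them is $\mathbf{T_3}$, or its partner is a child edge of the other copy and the pair is bad) is the substance the paper elides.
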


\begin{proof}
    First, suppose that $\Delta(\mathcal T)=0$. By Lemma \ref{prop:bad_pair_bound_delta0}, there are no bad pairs. There are either $0$ or two edges between any pair of vertex labels $(i, j)$, since there are no $\bf{T_3}$ edges, by Lemma \ref{lem:t4_bound}. Thus, we see that each edge is paired with exactly one other edge in the same level, which proves the claim. By definition, each pair must consist of one $\bf{T_1}$ and one $\bf{T_2}$ edge.

    In the other direction, suppose $\mathcal T$ admits a perfect Wick Pairing. Then, the resulting graph can be viewed as a tree with doubled edges. Then we see that $|E(\mathcal T)|/2+1=|V(\mathcal T)|$
    
      The same proof also applies to tree isomorphism classes.
\end{proof}

\subsection{Wick's Pairings for Monomial Trees}

\begin{prop} \label{prop:tree_pairs_delta_0}
    Let \( f_t(z) = z^{d_t} \) and fix a positive integer \( m \). Then the number of isomorphism classes of labeled monomial trees \( \mathcal{T} \) with excess \( \Delta(\mathcal{T}) = 0 \) contributing to \( \mathbb{E}[(x_i^t)^{2m}] \) is
\[
(2m - 1)!! \cdot (2d_{t-1} - 1)!!^m \cdot (2d_{t-2} - 1)!!^{md_{t-1}} \cdots (2d_1 - 1)!!^{md_{t-1} \cdots d_2}.
\]
\end{prop}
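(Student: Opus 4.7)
The approach relies on Lemma~\ref{lem:delta_0_perfect_pairing}: a labeled monomial tree has $\Delta = 0$ if and only if every non-root label appears exactly twice and in the same generation, yielding a perfect row-wise Wick pairing. Using the description of an isomorphism class as a partition of vertex positions (see the Remark following the definition of isomorphism class), the counting problem reduces to enumerating sequences of level-wise pairings: for each level $s$, we must specify how to pair the children of every paired vertex at level $s+1$. The plan is to count these sequences recursively from the root downward.

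I would then build up the count one level at a time. The root at level $t$ has $2m$ children at level $t-1$ (from the tree expansion for $\mathbb E[(x_i^t)^{2m}]$), and pairing them into $m$ same-label pairs contributes a factor of $(2m-1)!!$; this creates $m$ paired vertices at level $t-1$. Each such paired vertex is built from two physical vertices, each of which has $d_{t-1}$ children per the monomial tree definition, so it carries $2d_{t-1}$ physical children at level $t-2$. Pairing these $2d_{t-1}$ children independently at each of the $m$ paired vertices introduces the factor $(2d_{t-1}-1)!!^m$ and yields $m\cdot d_{t-1}$ paired vertices at level $t-2$. Iterating this, at level $s$ (for $1 \le s \le t-1$) there are $m\cdot d_{t-1}d_{t-2}\cdots d_{s+1}$ paired vertices (empty product when $s = t-1$), each contributing $(2d_s - 1)!!$ from pairing its $2d_s$ physical children. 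Multiplying these contributions over $s = t-1, t-2, \ldots, 1$ gives exactly the claimed product
\[
(2m-1)!! \cdot (2d_{t-1}-1)!!^m \cdot (2d_{t-2}-1)!!^{m d_{t-1}} \cdots (2d_1-1)!!^{m d_{t-1}\cdots d_2}.
\]

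The main item requiring justification, which I do not expect to be a serious obstacle, is that this recursive procedure enumerates $\Delta = 0$ isomorphism classes without overcounting or omission. Distinct pairing sequences produce distinct partitions of vertex positions (hence distinct isomorphism classes), and conversely every $\Delta = 0$ isomorphism class arises this way: by Lemma~\ref{lem:delta_0_perfect_pairing} its underlying partition is obtained by level-wise pairings with no $\textbf{T}_3$ edges and no bad pairs, which is exactly the combinatorial data we are choosing. The fact that the root label does not reappear elsewhere (so that the root's block is a singleton and no extra identification with $i$ complicates the partition) is guaranteed by $\Delta = 0$ via $M_r = 0$ from Lemma~\ref{lem:Ni_3}. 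Since isomorphism classes are partitions modulo the $S_N$-action on labels, no additional symmetry factor enters the count.
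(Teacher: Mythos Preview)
Your proposal is correct and follows essentially the same approach as the paper: invoke Lemma~\ref{lem:delta_0_perfect_pairing} to reduce to counting row-wise Wick pairings, then proceed level by level from the root, obtaining $(2m-1)!!$ from the first row, $(2d_{t-1}-1)!!^m$ from the second, and so on. Your write-up is in fact more careful than the paper's about the bijection between pairing sequences and $\Delta=0$ isomorphism classes (invoking $M_r=0$ from Lemma~\ref{lem:Ni_3}), but the core argument is identical.
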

\begin{proof}
    By Lemma \ref{lem:delta_0_perfect_pairing}, a tree $\mathcal T$ (or tree isomorphism class) with \( \Delta(\mathcal T) = 0 \) admits a perfect ``Wick Pairing". Thus, the contributions to \( \mathbb{E}[(x_i^t)^{2m}] \) arise only from trees in which all edge labels can be perfectly Wick-paired at each level. Now we go row by row and count the number of ways to pair the edges. The top row has $2m$ edges, which contribute the factor $(2m-1)!!$. In the second row, there are now $m$ pairs of vertices, which have a total of $2d_{t-1}$ children per pair. We can then pair those children in $(2d_{t-1}-1)!!$ for a total factor of $(2d_{t-1}-1)!!^m$ for the second row. Continuing in this manner, we can obtain the formula.
\end{proof}

Note that if the exponent is 
odd ($2m+1$), no Wick pairings exist since there are an odd number of edges in the first row.

\begin{prop}
    The state evolution variance agrees with Proposition \ref{prop:tree_pairs_delta_0} when $x^0=\bf 1$ (i.e. the vector of all $1$'s in $\mathbb R^N$).
\end{prop}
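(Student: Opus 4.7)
The plan is to directly verify that the closed form in Proposition~\ref{prop:tree_pairs_delta_0} equals the standard Gaussian moment $\mathbb{E}[(y_i^t)^{2m}] = (2m-1)!!\,\tau_t^{2m}$, by unwinding the one-dimensional state evolution recursion $\tau_{s+1}^2 = \mathbb{E}[f_s(\tau_s Z)^2]$ using the fact that for monomial $f_s(z)=z^{d_s}$ we have the explicit identity $\mathbb{E}[Z^{2d_s}]=(2d_s-1)!!$. Since $x^0 = \mathbf 1_N$, we have $\tau_0^2 = \mathbb{E}[(x_i^0)^2] = 1$, and since Definition~\ref{defn:amp_instance} enforces $f_0(x)=x$, we get $d_0=1$ and $(2d_0-1)!! = 1$.

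The main step is to prove by induction on $t$ that
\[
\tau_t^2 \;=\; \prod_{s=0}^{t-1} (2d_s - 1)!!^{\,d_{s+1}d_{s+2}\cdots d_{t-1}},
\]
with the convention that an empty product of $d_j$'s in the exponent equals $1$. The base case $t=0$ is vacuous (empty product equals $1$, matching $\tau_0^2=1$). For the inductive step, apply the SE recursion $\tau_{t+1}^2 = \tau_t^{2d_t}(2d_t-1)!!$: raising the inductive expression for $\tau_t^2$ to the power $d_t$ multiplies each existing exponent by $d_t$, and tacking on the new factor $(2d_t-1)!!$ (with empty exponent) completes the step. Once this identity is in hand, raising to the $m$-th power and multiplying by $(2m-1)!!$ gives
\[
\mathbb{E}[(y_i^t)^{2m}] \;=\; (2m-1)!!\cdot\prod_{s=0}^{t-1}(2d_s-1)!!^{\,m\,d_{s+1}\cdots d_{t-1}}.
\]

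To finish, observe that the $s=0$ factor equals $(2d_0-1)!!^{\,m\,d_1\cdots d_{t-1}} = 1^{\,m\,d_1\cdots d_{t-1}} = 1$ because $f_0$ is the identity, so this factor drops out. Reversing the order of the remaining product (writing it from $s=t-1$ down to $s=1$) yields exactly
\[
(2m-1)!!\cdot (2d_{t-1}-1)!!^{m}\cdot(2d_{t-2}-1)!!^{\,m\,d_{t-1}}\cdots(2d_1-1)!!^{\,m\,d_{t-1}\cdots d_2},
\]
which is the formula in Proposition~\ref{prop:tree_pairs_delta_0}. There is no real obstacle: the argument is a bookkeeping exercise showing that the level-by-level Wick pairing count collapses precisely into the unwound state evolution recursion, with the convention $d_0=1$ accounting for the absence of any $(2d_0-1)!!$ factor. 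The only point to check carefully is the exponent structure: at each level $s$, the number of vertex-pairs produced by pairings higher in the tree equals the product of the degrees $d_{s+1}\cdots d_{t-1}$ (times $m$), which is exactly the exponent appearing both in the tree count and in the iterated SE formula.
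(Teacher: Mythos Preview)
Your proof is correct and follows essentially the same approach as the paper: both arguments unwind the state evolution recursion $\tau_{t}^2 = \tau_{t-1}^{2d_{t-1}}(2d_{t-1}-1)!!$ by induction on $t$ to obtain the closed form $\tau_t^2 = \prod_s (2d_s-1)!!^{d_{s+1}\cdots d_{t-1}}$, then match this against the formula in Proposition~\ref{prop:tree_pairs_delta_0}. Your version is slightly more explicit about the convention $d_0=1$ and the final step of raising to the $m$-th power and multiplying by $(2m-1)!!$, but the substance is identical.
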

\begin{proof}
    Again, suppose that $f_t(z)=z^{d_t}$. Then the SE formula indicates that the variance at iteration $t$ satisfies $\tau_t^2=\mathbb E[f_{t-1}(\tau_{t-1}Z)^2]$ where $Z\sim\mathcal N(0,1)$. We induct on $t$. Suppose $x^0$ is deterministic (this can be modified). By definition, we have that $\tau_0=1$, $\tau_1=1$; therefore, $\tau_2=(2d-1)!!$. 

    For the inductive step, we have that $\tau_t^2=\mathbb E[f_{t-1}(\tau_{t-1}Z)^2]=\mathbb E[(\tau_{t-1}Z)^{2d_{t-1}}]=\tau_{t-1}^{2d_{t-1}}\mathbb E[Z^{2d_{t-1}}]$, where $\tau_{t-1}^2=(2d_{t-2}-1)!!(2d_{t-3}-1)!!^{d_{t-2}}\dots (2d_1-1)!!^{d_{t-2}\dots d_2}$, so $$\tau_t^2=(2d_{t-1}-1)!!(2d_{t-2}-1)!!^{d_{t-1}}(2d_{t-3}-1)!!^{d_{t-1}d_{t-2}}\dots (2d_1-1)!!^{d_{t-1}d_{t-2}\dots d_2}$$
\end{proof}

\section{AMP Moment Algebra}
\label{sec:wick-algebra}

Lemma \ref{lem:delta_0_perfect_pairing} states that each labeled tree with excess $\Delta=0$ has its edges perfectly Wick paired row by row. Given an unlabeled tree $T$, the following definition counts the number of ways to produce such a perfect pairing from $T$. In other words, how many isomorphism classes with excess $\Delta=0$ can be produced from $T$.

\begin{defn}[Wick Pairing] \label{defn:tree_wick_pairings}
Let \(T\) be an unlabeled tree whose vertices are organized by depth.
For each level \(\ell \ge 0\), let \(V_\ell\) denote the set of vertices at
depth \(\ell\), and let
\[
E_\ell = \{ e = (u, v) : u \in V_\ell,\ v \in V_{\ell+1} \}
\]
be the set of edges between levels \(\ell\) and \(\ell+1\).

A \emph{row-wise Wick pairing} of \(T\) is a family \(\{\pi_\ell\}_{\ell \ge 0}\)
such that, for every \(\ell\),

\begin{itemize}
    \item \(\pi_\ell\) is a partition of \(V_\ell\) into unordered pairs
    \(\{v,v'\}\) such that the parents $p(v), p(v')\in V_{\ell-1}$ of $v,v'$ are either the same or belong to the same pair in $\pi_{\ell-1}$
    \item In particular, such a family $\{\pi_\ell\}_{\ell\geq 0}$ produces a perfect matching on the edge sets \(E_\ell\). For each edge $(p(v), v)$, it is paired with $(p(v'),v')$, where $\{v, v'\}$ forms a block in $\pi_{\ell+1}$.
\end{itemize}

In particular, a Wick pairing exists only if \(|E_\ell|\) is even for every
\(\ell\). We define
\[
\mathrm{Wick}(T)
\]
to be the number of such row-wise Wick pairings of \(T\), i.e.\ the number of
ways to pair all edges between consecutive levels. Equivalently, a Wick
pairing specifies, at each level, which vertices are identified as having the same label, without reference to any
specific numeric labels. This results in an isomorphism class $\mathcal I$, and we require $\Delta(\mathcal I)=0$. See Figure \ref{fig:wick_pairings}.

Suppose $T_1, \dots, T_r$ are unlabeled trees, except for the root, and they have the same root label. We may define $\mathrm{Wick}(T_1, \dots, T_r)=\mathrm{Wick}(T_1\star\dots\star T_r)$ where $\star$ denotes the operation of gluing two trees at their common root label. 
\end{defn}

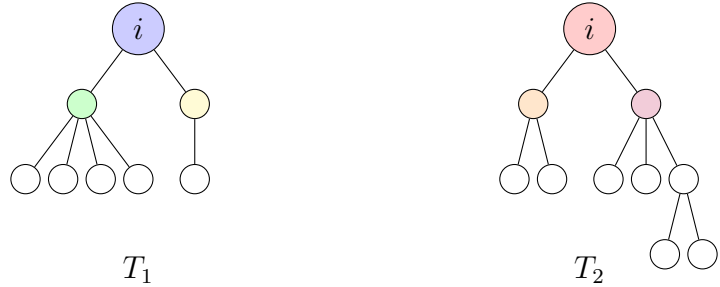
\begin{figure}[h]
\centering
\begin{tikzpicture}[level distance=1cm,
  level 1/.style={sibling distance=1.5cm},
  level 2/.style={sibling distance=0.5cm}]

\node[circle,draw,fill=blue!20] (r1) at (0,0) {\(i\)}
  child {node[circle,draw,fill=green!20] (a1) {}
    child {node[circle,draw,fill=white] (a11) {}}
    child {node[circle,draw,fill=white] (a12) {}}
    child {node[circle,draw,fill=white] (a13) {}}
    child {node[circle,draw,fill=white] (a14) {}}
  }
  child {node[circle,draw,fill=yellow!20] (a2) {}
    child {node[circle,draw,fill=white] (a21) {}}
  };

\node[circle,draw,fill=red!20] (r2) at (6,0) {\(i\)}
  child {node[circle,draw,fill=orange!20] (b1) {}
    child {node[circle,draw,fill=white] (b11) {}}
    child {node[circle,draw,fill=white] (b12) {}}
  }
  child {node[circle,draw,fill=purple!20] (b2) {}
    child {node[circle,draw,fill=white] (b21) {}}
    child {node[circle,draw,fill=white] (b22) {}}
    child {node[circle,draw,fill=white] (b23) {}
    child {node[circle,draw,fill=white] (b41) {}}
    child {node[circle,draw,fill=white] (b41) {}}}
  };

\node at (0,-3.2) {\( {T}_1 \)};
\node at (6,-3.2) {\( {T}_2 \)};

\end{tikzpicture}
\caption{In the example above there are $45$ Wick pairing between the trees \( {T}_1 \) and \( {T}_2 \) with root label $i$. After gluing the trees at the root we note that green and orange vertices must be paired and the yellow and purple vertices must be paired (by parity considerations). Then there are $5\cdot 3\cdot 1$ ways to pair the children of green and orange and $3\cdot 1$ ways to pair the children of yellow and purple for a total of $45$ pairings, so $\mathrm{Wick}(T_1, T_2)=45$. The bottom two vertices of tree $T_2$ must always be paired together.}
\label{fig:wick_pairings}
\end{figure}

\begin{defn}[AMP Moment Algebra on Unlabeled Trees] \label{defn:tree_algebra}
Let \( \mathcal A_i \) denote the set of all unlabeled trees in which only a root is labeled \( i \in [N] \), and all other vertices are unlabeled. Each tree is a monomial tree: every vertex \( v \) has an out-degree \( d_v \in \mathbb{N} \), and edges connect vertices from depth \( s \) to \( s+1 \). In particular, each tree is graded by vertex depth from its root. Let
\[
\mathcal A := \bigsqcup_{i=1}^N \mathcal A_i.
\]

The \emph{AMP Moment Algebra on Rooted Trees} is the triple
\[
(\mathcal{A}, \star, \langle \cdot, \cdot \rangle),
\]
defined as follows:

\begin{enumerate}
    \item \textbf{Vector space:}  
    \[
    \mathcal{A} := \operatorname{span}_{\mathbb{R}} \{ T : T \in \mathcal A \}, \quad \mathcal{A}_i := \operatorname{span}_{\mathbb{R}} \{ T \in \mathcal A_i \}.
    \]

    \item \textbf{Product} \( \star : \mathcal{A}_i \times \mathcal{A}_i \to \mathcal{A}_i \):  
    For \( T_1, T_2 \in \mathcal A_i \), define \( T_1 \star T_2 \) to be the rooted tree formed by taking the disjoint union of \( T_1 \) and \( T_2 \) and identifying their labeled roots into a single root labeled \( i \). The new root inherits all the children of both original roots. Extend \( \star \) bilinearly. For \( i \ne j \), set \(  T_1 \star  T_2 := 0 \).

    \item \textbf{Inner product:}  
    For \( T_1, T_2 \in \mathcal A_i \), define
    \[
    \langle  T_1,  T_2 \rangle := \operatorname{Wick}( T_1,  T_2),
    \]
    where \( \operatorname{Wick}( T_1,  T_2) \) is the set of valid Wick pairings (Definition~\ref{defn:tree_wick_pairings}) on basis trees, and the inner product is extended bilinearly to $\mathcal A$.

    \item \textbf{Moment functional:}  
    We extend $\mathrm{Wick}(T)$ linearly to $\mathcal A$.
\end{enumerate}

Then it is not hard to see that \( (\mathcal{A}_i, \star, \langle \cdot, \cdot \rangle) \) is a commutative algebra. We remark that the inner product commutes with $\star$, i.e., $\mathrm{Wick}( T_1\star  T_2)=\mathrm{Wick}( T_1,  T_2)$ simply by the definition.
\end{defn}

\begin{lem}
    $\langle  T_1,  T_2\rangle
      :=\bigl|\mathrm{Wick}( T_1, T_2)\bigr|$ is a positive definite inner product on $\mathcal A_i$.
\end{lem}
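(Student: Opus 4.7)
The plan is to prove positive definiteness of $\langle\cdot,\cdot\rangle$ by induction on tree depth, realizing the Wick pairing as a Gaussian-moment form and appealing to algebraic independence of independent Gaussians. Symmetry ($T_1\star T_2=T_2\star T_1$) and bilinearity are immediate from Definition~\ref{defn:tree_algebra}, and $\langle T,T\rangle=\mathrm{Wick}(T\star T)\ge 0$ since it is a cardinality, so the real content is strict positivity $\langle x,x\rangle>0$ for every nonzero $x\in\mathcal A_i$.

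To make the induction carry cleanly, I first pass to the larger space $\mathcal B^{(t)}$ of formal $\mathbb R$-linear combinations of isomorphism classes of rooted unlabeled trees of depth at most $t$. Because Definition~\ref{defn:tree_wick_pairings} uses only the tree shape, the pairing $\langle\cdot,\cdot\rangle$ extends to $\mathcal B^{(t)}$ verbatim, and $\mathcal A_i^{(t)}$ embeds inner-product-preservingly into $\mathcal B^{(t)}$, so it suffices to prove positive definiteness on each $\mathcal B^{(t)}$. The key structural observation I exploit is a canonical linear identification $\mathcal B^{(t+1)}\cong S(\mathcal B^{(t)})$ with the symmetric algebra, obtained by sending a depth-$(t{+}1)$ tree to the unordered product $S_1\odot\cdots\odot S_k$ of its root-children subtrees. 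Unpacking the row-wise pairing condition then yields the recursion
\[
\langle T_1,T_2\rangle_{\mathcal B^{(t+1)}}
=\sum_{\pi\in\mathcal M_{k+k'}}\prod_{\{a,b\}\in\pi}\langle C_a,C_b\rangle_{\mathcal B^{(t)}},
\]
where $(C_1,\dots,C_{k+k'})$ is the concatenation of the root-children multisets of $T_1,T_2$ and $\mathcal M_n$ denotes the set of perfect matchings of $\{1,\dots,n\}$ (empty if $n$ is odd).

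By the inductive hypothesis $\langle\cdot,\cdot\rangle$ on $\mathcal B^{(t)}$ is positive definite, so I fix an orthonormal basis $\{e_\beta\}$, introduce i.i.d.\ standard Gaussians $\{Z_\beta\}$ on an auxiliary probability space, and let $\xi:\mathcal B^{(t)}\to L^2(\Omega)$ be the linear isometry $e_\beta\mapsto Z_\beta$, so $\mathbb E[\xi(S)\xi(S')]=\langle S,S'\rangle$. I then lift $\xi$ to $\Phi:S(\mathcal B^{(t)})\to L^2(\Omega)$ by $\Phi(S_1\odot\cdots\odot S_k):=\xi(S_1)\cdots\xi(S_k)$ (pointwise product of random variables), extended linearly. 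By Isserlis'/Wick's theorem for jointly Gaussian vectors, $\mathbb E[\Phi(T_1)\Phi(T_2)]$ is the hafnian of the Gram matrix of $(C_1,\dots,C_{k+k'})$, which matches the recursion above; hence $\langle\cdot,\cdot\rangle_{\mathcal B^{(t+1)}}=\mathbb E[\Phi(\cdot)\Phi(\cdot)]$ on basis trees, and thus everywhere by bilinearity.

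To close the induction I only need $\Phi$ to be injective, for then $\langle x,x\rangle=\mathbb E[\Phi(x)^2]=0$ forces $\Phi(x)=0$ a.s.\ and hence $x=0$; the base case $\mathcal B^{(0)}\cong\mathbb R$ with $\langle\text{leaf},\text{leaf}\rangle=1$ is trivial. Under the identification $S(\mathcal B^{(t)})\cong\mathbb R[e_\beta]$, $\Phi$ is the unique algebra homomorphism sending $e_\beta\mapsto Z_\beta$, and its kernel consists of polynomials that vanish almost surely on independent standard Gaussians; since any finite subcollection of the $Z_\beta$ has joint density on all of $\mathbb R^n$, this kernel is trivial. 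This injectivity step is the main conceptual point of the argument and I expect to be the one place a formal writeup needs care: everything else is bookkeeping matching the row-wise Wick pairing against the Gaussian hafnian formula.
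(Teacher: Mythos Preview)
Your argument is correct and, in fact, more complete than the paper's. The paper's proof is a single sentence: for any basis tree $T$, $\langle T,T\rangle>0$ because overlaying $T$ on itself exhibits at least one valid row-wise Wick pairing. That establishes only that the Gram matrix in the tree basis has positive diagonal entries, which does not by itself imply positive definiteness on arbitrary linear combinations in $\mathcal A_i$; the paper does not address general $x=\sum_k c_k T_k$. Your route genuinely fills this gap: inducting on depth, identifying $\mathcal B^{(t+1)}\cong S(\mathcal B^{(t)})$, and realizing the form as $\langle x,x\rangle=\mathbb E[\Phi(x)^2]$ via Isserlis' theorem reduces strict positivity to the injectivity of polynomial evaluation on independent Gaussians, which is standard. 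The Gaussian-moment realization you invoke is essentially Proposition~\ref{prop:se_functional_tree} run in the other direction, so your proof also makes explicit why the pairing behaves like a genuine inner product. The paper's version buys brevity; yours buys an actual proof of the lemma as stated.
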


\begin{proof}
    It is simple to see that for any unlabeled basis tree $T_1$, we have $\langle  T_1,  T_1\rangle>0$ for any nonempty tree since overlaying the tree on top of itself gives a valid Wick pairing of $T_1$ with itself.
\end{proof}

A key observation about the AMP Moment Algebra is that the functional 
\(\mathrm{Wick}(T)\) coincides with the prediction of state 
evolution (SE) for the variance of an AMP iterate whose combinatorial expansion 
is given by the rooted tree \(T\). The algebra structure $(\star)$ is primarily about 
gluing trees along their roots, and this operation naturally extends the 
Wick pairing rule to compute not only variances but also all mixed moments of AMP 
iterates.

\begin{prop} \label{prop:se_functional_tree}
Let \( T \in \mathcal A_i \) be a rooted unlabeled tree. Define the SE functional \( \mathcal{L}( T) \) recursively as follows:

\begin{itemize}
    \item If \(  T \) is an edge tree (i.e. consists of one edge), then \( \mathcal{L}(\mathcal T) := 1 \).
    \item If the root of \(T\) has children \(  T^{(1)}, \dots,  T^{(d)} \), then
    \[
    \mathcal{L}( T) := \mathbb{E}[Z_1 \cdots Z_d], \quad \text{where } (Z_1, \dots, Z_d) \sim \mathcal{N}(0, \Sigma),
    \]
    and the covariance matrix \( \Sigma \in \mathbb{R}^{d \times d} \) is given by
    \[
    \Sigma_{ij} :=\mathcal L( T^{(i)}\star  T^{(j)})
    \]
\end{itemize}
Then we have the equality \( \mathcal{L}(\mathcal F) = \mathrm{Wick}(\mathcal F)\).
\end{prop}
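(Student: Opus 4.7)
The plan is to prove this by induction on the depth $t$ of $T$, with Isserlis' theorem (Wick's formula for jointly Gaussian moments) bridging the two sides. The key structural observation is that both $\mathcal{L}$ and $\mathrm{Wick}$ decompose $T$ at the root: $\mathcal{L}(T)$ invokes the Gaussian mixed-moment formula over the root's child-subtrees, while $\mathrm{Wick}(T)$ first fixes a perfect matching of the root's children (the top-level partition $\pi_1$ in Definition~\ref{defn:tree_wick_pairings}) before proceeding row-wise into each pair.

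For the base case, when $T$ is an edge tree both sides equal $1$ by convention and by the direct count of the trivial pairing. For small depth-one trees consisting of a root with $d$ leaf children, all covariances $\Sigma_{ij}$ equal $1$, so $\mathcal{L}(T) = \mathbb{E}[Z^d] = (d-1)!!$ when $d$ is even (and $0$ otherwise), matching the number of row-wise pairings of the $d$ top edges.

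For the inductive step, let $T$ have root with child-subtrees $T^{(1)}, \ldots, T^{(d)}$, each of depth strictly less than $t$. Since $T^{(i)} \star T^{(j)}$ has depth at most $t-1$, the inductive hypothesis gives $\mathcal{L}(T^{(i)} \star T^{(j)}) = \mathrm{Wick}(T^{(i)} \star T^{(j)}) = \mathrm{Wick}(T^{(i)}, T^{(j)})$. Applying Isserlis' theorem to the centered Gaussian vector $(Z_1, \ldots, Z_d)$ with covariance $\Sigma_{ij} = \mathrm{Wick}(T^{(i)}, T^{(j)})$ yields
\[
\mathcal{L}(T) \;=\; \mathbb{E}[Z_1 \cdots Z_d] \;=\; \sum_{\pi} \prod_{\{i,j\} \in \pi} \mathrm{Wick}(T^{(i)}, T^{(j)}),
\]
where the sum runs over perfect matchings $\pi$ of $\{1, \ldots, d\}$ and vanishes when $d$ is odd. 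It then remains to show $\mathrm{Wick}(T)$ admits the identical factorized expansion. A row-wise Wick pairing of $T$ is specified by (i) a perfect matching $\pi_1$ of the $d$ root-children and (ii) for each pair $\{i, j\} \in \pi_1$, a row-wise Wick pairing of the glued subtree $T^{(i)} \star T^{(j)}$. The parent-pair compatibility constraint from Definition~\ref{defn:tree_wick_pairings} only links each vertex's block to its parent's block, so choices at deeper levels decouple across distinct pairs in $\pi_1$, and the $\star$ operation encodes precisely the vertex identification of paired siblings that subsequent pairings must respect.

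The main obstacle is verifying this bijective factorization of row-wise Wick pairings: one must argue carefully that a Wick pairing of $T$ corresponds to the data $(\pi_1, \{\sigma_{ij}\}_{\{i,j\} \in \pi_1})$ where each $\sigma_{ij}$ is a Wick pairing of $T^{(i)} \star T^{(j)}$, with no coupling between different $\sigma_{ij}$'s. Once this decoupling is in hand, matching it term by term against Isserlis' expansion closes the induction. The same argument extends to forests $\mathcal{F} = T_1 \star \cdots \star T_r$ by first performing the $\star$-gluing and then applying the tree case to the resulting tree.
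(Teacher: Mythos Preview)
Your proposal is correct and follows essentially the same approach as the paper: induction combined with Isserlis' (Wick's) theorem, together with the observation that a row-wise Wick pairing of $T$ factors as a top-level matching $\pi_1$ of the root's children times independent Wick pairings of each glued pair $T^{(i)}\star T^{(j)}$. The only cosmetic difference is that you induct on the depth of $T$ whereas the paper inducts on the number of edges; your choice is arguably slightly cleaner, since $T^{(i)}\star T^{(j)}$ always has strictly smaller depth than $T$ (for all $i,j$), whereas it need not have fewer edges when $i=j$, though this is harmless since Isserlis' formula only involves off-diagonal pairs.
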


\begin{proof}
We induct on the number of edges in the rooted tree \(  T \in \mathcal A_i \). Suppose \(  T \) is an edge tree, consisting of a single edge connecting the root to a single leaf. Then, by definition, \( \mathcal{L}( T) := 1 \), and \( \mathrm{Wick}( T) = 1 \).

Assume that for all trees \(  T' \) with fewer than \( k \) edges, we have
\(
\mathcal{L}( T') = \mathrm{Wick}( T').
\)
Now let \( \ T \) be a tree with \( k \) edges. Suppose the root has children \(  T^{(1)}, \dots,  T^{(d)} \). By the inductive hypothesis, for each pair \( i, j \), we have:
\[
\Sigma_{ij} = \mathrm{Wick}( T^{(i)} \star \mathcal T^{(j)}).
\]

Now, by Wick's theorem for multivariate Gaussians, we have:
\[
\mathbb{E}[Z_1 \cdots Z_d] = \sum_{\text{pairings } \pi \text{ of } [d]} \prod_{\{i,j\} \in \pi} \Sigma_{ij},
\]
and thus,
\[
\mathcal{L}(\mathcal T) = \sum_{\text{pairings } \pi} \prod_{\{i,j\} \in \pi} \mathrm{Wick}(\mathcal T^{(i)} \star \mathcal T^{(j)}).
\]

But each such pairing corresponds to a Wick pairing of the full tree \(  T \). So, to construct a Wick pairing of \(  T \), we must Wick pair the subtrees \(  T^{(i)} \) and \(  T^{(j)} \) in pairs and then count the number of Wick contractions of the resulting glued trees. So the total number of valid Wick contractions is exactly:
\[
\mathrm{Wick}( T) = \sum_{\text{pairings } \pi} \prod_{\{i,j\} \in \pi} \mathrm{Wick}( T^{(i)} \star  T^{(j)}),
\]
matching the recursive computation of \( \mathcal{L}(T) \).

Hence, \( \mathcal{L}( T) = \mathrm{Wick}( T) \) for all trees \(  T \) with \( k \) edges, completing the inductive step.

\end{proof}

This result provides a concrete combinatorial interpretation of state evolution. Each tree in the forest represents a polynomial in the matrix entries of $A$ that appear in the expansion of an AMP iterate, and the total contribution to its variance or covariance is given by counting Wick pairings within each tree independently. We now prove the universality of the entire AMP moment Algebra for finite $D, t$.

\begin{thm}
\label{thm:universality_algebra_elements}

Fix an integer \(K<\infty\), and let \(\mathcal{A}_K\) denote the set of 
unlabeled trees (except for the root) with at most \(K\) edges. Let \( A = (A_{ij})_{1 \leq i, j \leq N} \) be a random matrix whose entries satisfy:
\[
\mathbb{E}[A_{ij}] = 0, \quad \mathbb{E}[A_{ij}^2] = \frac{1}{N}, \quad \text{and} \quad \log \mathbb{E}[e^{\lambda A_{ij}}] \leq \frac{C \lambda^2}{2N^2} \quad \forall \lambda \in \mathbb{R}.
\]
For any tree \( {T} \in \mathcal A_K \), define its value by:
\[
\operatorname{Val}({T}) := \sum_{\phi: V({T}) \to [N]} \prod_{(u,v) \in E(\mathcal{T})} A_{\phi(u)\phi(v)},
\]
where \( \phi \) is any labeling of the vertices of \( {T} \). Then as \( N \to \infty \),
\[
\mathbb{E}[\operatorname{Val}({T})] = \mathrm{Wick}({T}) + O_K(N^{-1/2}),
\]
where \( \mathrm{Wick}({T}) \) denotes the number of Wick pairings of \( {T} \), and the implicit constant in the error term depends only on \( K \) and $\lambda$, but not on $N$.
\end{thm}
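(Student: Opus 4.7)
The plan is the standard moment-method argument: expand $\mathbb E[\operatorname{Val}(T)]$ by linearity, group labelings by their induced isomorphism class, and control each class via its excess $\Delta$. Writing $\phi$ for a labeling of $V(T)$ fixing the root label $i$ carried by $T\in\mathcal A_i$, I would start from
\[
\mathbb E[\operatorname{Val}(T)] \;=\; \sum_{\mathcal I}\;\sum_{\phi \text{ inducing } \mathcal I}\;\mathbb E\!\left[\prod_{(u,v)\in E(T)} A_{\phi(u)\phi(v)}\right].
\]
Independence of the off-diagonal entries then factors the inner expectation as $\prod_{\{i,j\}} \mathbb E[A_{ij}^{b_{ij}}]$, where $b_{ij}$ is the edge multiplicity of the pair $\{i,j\}$ in the contracted graph. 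Any class with some $b_{ij}=1$ vanishes (using $\mathbb E A_{ij}=0$), so I restrict to classes with $b_{ij}\ge 2$ throughout. The sub-Gaussian hypothesis yields $|\mathbb E[A_{ij}^p]|\le (Cp)^{p/2}N^{-p/2}$ for all $p\ge 2$, so the moment product is bounded by $O_K(1)\cdot N^{-|E(T)|/2}$.

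Next I would bound the number of $\phi$ inducing a given class with $v$ distinct blocks: this is an injective labeling of the $v-1$ non-root blocks into $[N]\setminus\{i\}$, giving $N^{v-1}(1+O_K(N^{-1}))$. Each isomorphism class therefore contributes $O_K(N^{v-1-|E(T)|/2})=O_K(N^{-\Delta(\mathcal I)})$. For $\Delta(\mathcal I)=0$, Lemma~\ref{lem:delta_0_perfect_pairing} puts such classes in bijection with row-wise Wick pairings of $T$, of which there are exactly $\mathrm{Wick}(T)$; each has every $b_{ij}=2$, so $\prod \mathbb E[A_{ij}^2]=N^{-(v-1)}$ exactly and the per-class contribution is $1+O_K(N^{-1})$, summing to $\mathrm{Wick}(T)+O_K(N^{-1})$. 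For $\Delta(\mathcal I)>0$ one has $\Delta(\mathcal I)\ge 1/2$, so each class contributes $O_K(N^{-1/2})$; the total number of isomorphism classes of $T$ is bounded by a Bell number $B_{K+1}=O_K(1)$, and summing gives the aggregate error $O_K(N^{-1/2})$.

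I do not anticipate a genuine obstacle — the argument is entirely routine given the excess framework of Section~\ref{sec:tree-expansion} and Lemma~\ref{lem:delta_0_perfect_pairing}. The only care is in the counting: verifying that ``$\phi$ induces class $\mathcal I$'' corresponds to injective labelings of the blocks (inclusion–exclusion on partitions of $V(T)$), and noting that the odd-$|E(T)|$ case falls under $\Delta\ge 1/2$ automatically, because then some $b_{ij}$ must be odd and hence $\ge 3$. The sub-Gaussian moment bound provides the required uniform decay in the edge multiplicities, and the $O(N^{-1/2})$ rate is sharp in this framework, realized whenever a $\Delta=1/2$ class exists for $T$.
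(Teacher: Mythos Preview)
Your proposal is correct and follows essentially the same route as the paper's proof: group labelings by isomorphism class, use Lemma~\ref{lem:delta_0_perfect_pairing} to identify the $\Delta=0$ classes with Wick pairings contributing $1+O_K(N^{-1})$ each, and bound the $\Delta\ge 1/2$ classes by combining the sub-Gaussian moment estimate with a finite count of classes. The only cosmetic differences are that the paper tracks the moment product more explicitly via Lemma~\ref{lem:bij_bound} (obtaining $C_2^{6\Delta}(6\Delta)^{3\Delta}N^{-|E|/2}$) and counts classes per excess level, whereas you absorb both into $O_K(1)$ bounds using the Bell number and the fact that every $b_{ij}\le K$; since $K$ is fixed this is equivalent.
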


\begin{proof}
Let \( \mathcal{T}_\phi \) denote the labeled tree obtained by assigning labels \( \phi: V({T}) \to [N] \). Decompose the expectation as a sum over isomorphism classes of such labeled trees.

Let us first consider labeled trees \( \mathcal{T}_\phi \) with excess \( \Delta(\mathcal{T}_\phi) = 0 \), i.e., each edge label \( (i,j) \) appears exactly twice in the product. By Lemma~\ref{lem:delta_0_perfect_pairing}, such labelings correspond exactly to Wick pairings, and the number of isomorphism classes of such trees is, by definition, \( \mathrm{Wick}({T}) \). We have,
\[
\mathbb{E}\left[\prod_{(u,v) \in E(\mathcal{T}_\phi)} A_{\phi(u)\phi(v)}\right] = N^{-|E({T})|/2},
\]
since each distinct label pair contributes \( \mathbb{E}[A_{ij}^2] = 1/N \). Let $|V(\mathcal T_\phi)|$ denote the number of distinct vertex labels in the labeling $\phi$. Then, for a fixed value $|V(\mathcal T_\phi)|$, the number of possible injective labelings $\phi$ (excluding the root) is 
\[
N(N-1)\cdots (N - |V(\mathcal{T}_\phi)| + 2) = N^{|V(\mathcal{T}_\phi)|-1} (1 + O(N^{-1})),
\]
and using \( \Delta(\mathcal T_\phi) = 0 \Rightarrow |E(T)| /2= |V(\mathcal{T}_\phi)| - 1 \), we find
\[
\mathrm{Wick}({T}) \cdot N^{|V(\mathcal{T}_\phi)|-1 - |E(T)|/2} (1 + O(N^{-1})) = \mathrm{Wick}(T)(1 + O(N^{-1})),
\]
since the exponent simplifies to 0.

Now consider labelings \( \phi \) with \( \Delta(\mathcal{T}_\phi) > 0 \). From the subgaussian moment bound:
\[
\mathbb{E}[|A_{ij}|^p] \leq C_1^p \cdot N^{-p} \cdot p^{p/2},
\]
and by Lemma~\ref{lem:bij_bound}, the total sum of all \( b_{ij} > 2 \) satisfies:
\[
\sum_{(i,j):\, b_{ij}>2} b_{ij} \leq 6\Delta.
\]
Thus, the full product is bounded by:
\[
\mathbb{E}\left[\prod_{(u,v) \in E(\mathcal{T}_\phi)} A_{\phi(u)\phi(v)}\right]
\leq C_2^{6\Delta} \cdot (6\Delta)^{3\Delta} \cdot N^{-|E(\mathcal{T})|/2}.
\]

There are at most \( K_\Delta <\infty\) (not dependent on $N$) isomorphism classes of labeled trees with excess \( \Delta \), and for each class \( \mathcal{I} \) with \( V(\mathcal{I}) \) vertices, there are at most \( N^{V(\mathcal{I})} \leq N^{|E(\mathcal{I})|/2 - \Delta + 1} \) injective labelings.

So the total contribution from all \( \Delta > 0 \) terms is bounded by:
\[
\sum_{\Delta \geq 1/2}^{|V(T)|} K_\Delta \cdot N^{-\Delta} \cdot C_2^{6\Delta} \cdot (6\Delta)^{3\Delta},
\]
which is \( O_K(N^{-1/2}) \). Summing both contributions gives:
\[
\mathbb{E}[\operatorname{Val}({T})] = \mathrm{Wick}({T}) + O_K(N^{-1/2}).
\]
\end{proof}

Theorem~\ref{thm:universality_algebra_elements} implies that, as \( N \to \infty \), the expectation functional \( \mathbb{E}[\operatorname{Val}(\cdot)] \) on the subspace \( \mathcal A_K \subset \mathcal A \) converges to the Wick pairing functional defined purely combinatorially on tree monomials. The universality also holds for finite linear combinations of such trees. Since the Wick pairing defines an inner product on trees, this yields a universal limiting law for all mixed moments of AMP iterates corresponding to polynomial functions with bounded degree and depth. The proof for a growing number of iterations will require a more careful analysis of the error terms in the algorithm.

\section{State Evolution for Symmetric AMP}
\label{sec:universality}

In this section, we prove Theorem \ref{thm:State_evolution_approximation_polynomial} and Theorem \ref{thm:convergence_amp_probability}. This states that the moments of AMP iterates agree with the state evolution predictions for $t\lesssim \log N/(D\log D)$ iterations. The argument has two components:

\begin{itemize}
    \item[$(1)$] the $\Delta = 0$ part: Wick-paired trees exactly reproduce
    the state evolution moments.

    \item[$(2)$] the $\Delta > 0$ part: all excess trees contribute only a
    negligible error.
\end{itemize}

We treat these two regimes separately.  The key point is that $\Delta=0$
trees have the ``Wick pairing'' structure (Lemma \ref{lem:delta_0_perfect_pairing}), while
$\Delta>0$ trees contain a ``bad subtree'' that includes label collisions and $\bf{T_3}$ edges. Excluding the bad subtree, the remainder of the trees are ``Wick paired"; thus, we bound the contribution by using results from the $\Delta=0$ case.

\subsection{State Evolution for \texorpdfstring{$\Delta=0$}{Delta=0} with Polynomials}

\begin{lem}
\label{lem:pairing_lemma}
Let \( v_1 \) and \( v_2 \) be two vertices in a tree diagram that share the same label \( i \), and assume that each has parent vertices \( w_1 \) and \( w_2 \), respectively. Suppose the edges \( (w_1, v_1) \) and \( (w_2, v_2) \) are paired in the Wick pairing.

Let \( f_{t_1}^i \) and \( f_{t_2}^i \) be the polynomial AMP functions applied at \( v_1 \) and \( v_2 \), respectively. Interpret each \( f_{t_j}^i \) as an element in the tree algebra \( \mathcal{A} \), represented as a linear combination of monomial trees of depth $1$ corresponding to the polynomial expansion of \( f_{t_j}^i \).

Then the total contribution of all valid Wick pairings between the subtrees rooted at \( v_1 \) and \( v_2 \), weighted by monomial coefficients, is equal to the inner product
\[
\langle f_{t_1}^i, f_{t_2}^i \rangle_{\mathcal{A}} = \mathbb{E}[f_{t_1}^i(Z) f_{t_2}^i(Z)],
\]
where \( Z \sim \mathcal{N}(0,1) \).
\end{lem}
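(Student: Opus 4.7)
The plan is to reduce the statement to a Gaussian moment identity via the bilinearity of the AMP moment algebra. Write $f_{t_j}^i(z)=\sum_{d=0}^{D} c_{t_j,d}\, z^{d}$, and let $\mathsf{T}_d\in \mathcal{A}_i$ denote the rooted depth-$1$ tree with root label $i$ and $d$ unlabeled children. Under the identification in Definition \ref{defn:tree_algebra}, the polynomial $f_{t_j}^i$ becomes the algebra element $\sum_d c_{t_j,d}\, \mathsf{T}_d$, so by bilinearity of the inner product,
\[
\langle f_{t_1}^i, f_{t_2}^i \rangle_{\mathcal{A}}
= \sum_{d_1,d_2} c_{t_1,d_1}\, c_{t_2,d_2}\, \mathrm{Wick}(\mathsf{T}_{d_1}\star \mathsf{T}_{d_2}).
\]

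The first step is to evaluate $\mathrm{Wick}(\mathsf{T}_{d_1}\star \mathsf{T}_{d_2})$ directly from Definition \ref{defn:tree_wick_pairings}. The glued tree $\mathsf{T}_{d_1}\star \mathsf{T}_{d_2}$ has a single root with $d_{1}+d_{2}$ leaf-children, all of which share that same parent. A row-wise Wick pairing must partition these $d_1+d_2$ children into unordered pairs whose parents coincide; this constraint is automatic here, so the count is $(d_1+d_2-1)!!$ if $d_1+d_2$ is even and $0$ otherwise.

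The second step is the Gaussian side. Expanding by bilinearity,
\[
\mathbb{E}\big[f_{t_1}^i(Z)\, f_{t_2}^i(Z)\big]
= \sum_{d_1,d_2} c_{t_1,d_1}\, c_{t_2,d_2}\, \mathbb{E}[Z^{d_1+d_2}],
\]
and the classical identity $\mathbb{E}[Z^{2k}]=(2k-1)!!$, $\mathbb{E}[Z^{2k+1}]=0$ matches the previous computation coefficient by coefficient. Combining the two steps yields $\langle f_{t_1}^i, f_{t_2}^i\rangle_{\mathcal{A}} = \mathbb{E}[f_{t_1}^i(Z)f_{t_2}^i(Z)]$.

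The only subtlety, which I expect to be the main obstacle to state cleanly rather than to prove, is the translation between the local claim in the lemma (summing over Wick pairings of the two subtrees rooted at $v_1$ and $v_2$ inside a larger tree) and the global count $\mathrm{Wick}(\mathsf{T}_{d_1}\star \mathsf{T}_{d_2})$. This is handled by observing that Definition \ref{defn:tree_wick_pairings} is inherently row-by-row: once $(w_1,v_1)$ and $(w_2,v_2)$ are already paired at level $\ell$, the admissibility condition at level $\ell+1$ requires only that the children being paired have parents lying in a common block of $\pi_\ell$, i.e.\ their parents are $v_1$ or $v_2$. Consequently, the choices of monomial at $v_1$ and $v_2$ together with the subsequent row-wise pairings factor through the combined multiset of $d_1+d_2$ children, and the local count coincides with $\mathrm{Wick}(\mathsf{T}_{d_1}\star \mathsf{T}_{d_2})$ summed against the coefficient product $c_{t_1,d_1} c_{t_2,d_2}$. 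This is exactly the bilinear expansion above, completing the identification with $\mathbb{E}[f_{t_1}^i(Z)f_{t_2}^i(Z)]$.
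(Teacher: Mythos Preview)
Your proof is correct and follows essentially the same approach as the paper: expand each polynomial into monomials, use bilinearity of the inner product, and identify $\mathrm{Wick}(\mathsf{T}_{d_1}\star\mathsf{T}_{d_2})=(d_1+d_2-1)!!$ (or $0$ for odd parity) with the Gaussian moment $\mathbb{E}[Z^{d_1+d_2}]$. Your additional paragraph on the local-to-global translation via the row-by-row structure of Definition~\ref{defn:tree_wick_pairings} is a helpful clarification that the paper leaves implicit.
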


\begin{proof}

The proof essentially follows from the linearity of the inner product on the moment algebra $\mathcal A$. Suppose $f_{t_1}^i(x) = c_{d_1}^1 x^{d_1} + c_{d_{1}-1}^1 x^{d_1-1} + \dots + c_1^1 x + c_0^1$ and $f_{t_2}^i(x) = c_{d_2}^2 x^{d_1} + c_{d_{2}-1}^2 x^{d_1-1} + \dots + c_1^2 x + c_0^2$. Then,
\begin{align*}
    \mathbb{E}[f_{t_1}^i(Z) f_{t_2}^i(Z)]=\sum_{\ell_1, \ell_2|\ell_1+\ell_2 \textrm{ even }}c^1_{\ell_1}c^2_{\ell_2}\cdot (2\ell_1+2\ell_2-1)!!
\end{align*}

The term $c^1_{\ell_1}c^2_{\ell_2}\cdot (2\ell_1+2\ell_2-1)!!$ counts the number of Wick pairings of the depth one tree associated with the terms $c^1_{\ell_1}x^{\ell_1}$ and $c^2_{\ell_2}x^{\ell_2}$, and thus summing over all trees, we obtain the desired result.
\end{proof}

\bigskip

In the following proposition, we will show that the number of isomorphism classes of trees with $\Delta=0$ (weighted by the appropriate coefficients) gives the correct moments so that $x^t_i\sim N(0,\tau_t^2)$. Now we allow the number of children to vary to account for the fact that $f_t(z)$ is a polynomial with terms of different degrees.
Recall from Lemma \ref{lem:tree_expansion}
\[
(x_i^t)^{m} = \sum_{\mathcal{T} \in \mathcal{T}_{i,t, m}} \operatorname{Val}(\mathcal{T}),
\]
where each \( \mathcal{T} \in \mathcal{T}_{i,t, m} \) is a labeled monomial tree of depth \( t \) rooted at vertex \( i \) such that the root has $m$ children, and \( \operatorname{Val}(\mathcal{T}) \) is given by
\[
\operatorname{Val}(\mathcal{T}) = \prod_{v \in V(\mathcal{T})} c_v \cdot \prod_{(u, v) \in E(\mathcal{T})} A_{uv} \cdot \prod_{\ell \in L(\mathcal{T})} x^0_\ell.
\]

\begin{prop}
\label{prop:SE_from_delta_zero_trees}
Let \( x_i^t \) denote the AMP iterate at time \( t \) with initialization \( x^0 \), and nonlinearity polynomials \(f_0, f_1, \dots, f_{t-1} \). Define \( {T}_{i,t,m} \) to be the set of depth \( t \) rooted unlabeled monomial trees contributing to the $2m$th moment \( \mathbb{E}[(x_i^t)^{2m}] \), where each vertex at depth \( s \) has an out-degree equal to the degree of a monomial in \( f_s \). Then for $t\geq 1$,
\begin{align}
\label{eq:se_pairings}
\tau_t^{2m}\cdot (2m-1)!! = \sum_{{T} \in {T}_{i,t,2m}} \mathrm{Wick}({T}) \cdot \prod_{v \in V({T})} c_v\cdot \prod_{\ell \in L({T})} \tau_0,
\end{align}
Where $V(T)$ and $L(T)$ denote the sets of vertices and leaf vertices, respectively.
\end{prop}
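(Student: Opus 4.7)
The plan is to induct on $t$, with the base case $t=1$ handled directly. Since $f_0(z)=z$, the only monomial tree in $\mathcal{T}_{i,1,2m}$ has the root $i$ at level $1$ with $2m$ leaf children; its Wick count is $(2m-1)!!$, all vertex coefficients are $1$, and the leaves contribute $\tau_0^{2m}$. Because $\tau_1^2=\mathbb{E}[f_0(\tau_0 Z)^2]=\tau_0^2$, the two sides of \eqref{eq:se_pairings} agree.

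For the inductive step, suppose the identity holds at time $t-1$ for every moment order. By Lemma~\ref{lem:delta_0_perfect_pairing}, any Wick pairing of $T\in\mathcal{T}_{i,t,2m}$ begins with a perfect matching $\pi$ of the $2m$ root-children at level $t-1$, giving $(2m-1)!!$ choices. Because $\Delta(T)=0$ forces every non-root label to appear exactly twice and in the same generation, no non-root label can appear in two subtrees attached to distinct pairs of $\pi$, so the remaining Wick pairings factor independently across the $m$ pairs of $\pi$. I would then analyze a single pair $(v^{(k)},v^{(l)})$ whose monomial degrees $d_k,d_l$ are drawn from $f_{t-1}$: after identifying $v^{(k)}$ and $v^{(l)}$ as one merged vertex, the subtree beneath becomes an element of $\mathcal{T}_{\cdot,t-1,d_k+d_l}$, with the coefficients $c_{t-1,d_k}c_{t-1,d_l}$ extracted separately. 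Applying the inductive hypothesis at depth $t-1$ and moment order $(d_k+d_l)/2$ yields $\tau_{t-1}^{d_k+d_l}(d_k+d_l-1)!!$ when $d_k+d_l$ is even (and zero otherwise), so that summing over $d_k,d_l$ together with $\mathbb{E}[Z^{2k}]=(2k-1)!!$ produces
\[
\sum_{d_k,d_l} c_{t-1,d_k}\, c_{t-1,d_l}\, \tau_{t-1}^{d_k+d_l}\, \mathbb{E}[Z^{d_k+d_l}]
\;=\; \mathbb{E}\!\left[f_{t-1}(\tau_{t-1} Z)^2\right]
\;=\; \tau_t^2,
\]
which is the rescaled instance of Lemma~\ref{lem:pairing_lemma} delivered by the induction. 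Each of the $m$ pairs therefore contributes $\tau_t^2$ independently, and multiplying by the $(2m-1)!!$ root-level matchings gives $(2m-1)!!\,\tau_t^{2m}$, completing the induction.

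The main obstacle is the combinatorial bookkeeping of the inductive step: one must verify cleanly that (i) the $\Delta=0$ constraint indeed forces distinct pairs of $\pi$ to have disjoint non-root label supports, so that the Wick pairings factor across pairs; and (ii) gluing the two depth-$(t-1)$ subtrees at $v^{(k)},v^{(l)}$ produces an honest element of $\mathcal{T}_{\cdot,t-1,d_k+d_l}$ after extracting the coefficients $c_{t-1,d_k}c_{t-1,d_l}$, so that the inductive hypothesis applies on the nose. Once those combinatorial points are in place, the remaining identification of the polynomial sum with the state-evolution recursion $\tau_t^2=\mathbb{E}[f_{t-1}(\tau_{t-1}Z)^2]$ is routine.
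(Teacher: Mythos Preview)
Your proposal is correct and follows essentially the same inductive strategy as the paper: pair the $2m$ root-children in $(2m-1)!!$ ways, then recurse on the glued subtrees beneath each pair. The only cosmetic difference is that the paper first invokes Lemma~\ref{lem:pairing_lemma} to handle the immediate children of each pair (yielding $\mathbb{E}[f_s(Z)^2]$) and then applies the inductive hypothesis to the subtrees one level further down, whereas you apply the inductive hypothesis directly to the full glued depth-$(t-1)$ subtree with root degree $d_k+d_l$; these are equivalent reorderings of the same computation.
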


\begin{proof}
    Consider the case $s=1$. In this case, there is one tree with $2m$ children, since we assume $f_0(x)=x$. Then $\mathrm{Wick}(T)=(2m-1)!!$ and we get a factor of $\tau_0^{2m}$ from the leaves. Since $\tau_1=\mathbb E[Z^2]=1$, both sides agree.

    Assume the claim holds at depth \( s \), so that:
Now assume the claim holds for some depth \(s \ge 1\), so that
\[
\tau_s^{2m} \cdot (2m-1)!! \;=\;
\sum_{T \in T_{i,s,m}} \mathrm{Wick}(T) \cdot \prod_{v \in V(T)} c_v \cdot \prod_{\ell \in L(T)} \tau_0.
\]
We prove the statement for depth \(s+1\). We consider depth \( s + 1 \). We will count the number of ways to Wick pair the edges, starting with the unlabeled trees contributing to $(x_{s+1}^t)^{2m}$. First, pair the \( 2m \) edges in the first generation (distance 1 from the root). There are \( (2m - 1)!! \) such pairings.
For each pair of first-generation vertices \( (v_1, v_2) \), we apply Lemma~\ref{lem:pairing_lemma}. the total contribution from all ways of Wick-pairing the children of \( V_1 \) and \( V_2 \), weighted by the corresponding monomial coefficients, is:
\[
\mathbb{E}[f_{s}(Z)^2], \quad \text{where } Z \sim \mathcal{N}(0,1).
\]
This contributes a factor of $\mathbb E[f_s(Z)^2]^m$ since there are $m$ pairs. Now that we have paired the first two generations, by the inductive hypothesis, there are $\tau_s^2$ ways to pair the subtree rooted at each pair of second generation vertices. From this, we conclude that the total contribution from pairings is
\[
(2m-1)!!\cdot \mathbb E[f_s(\tau_sZ)^2]^m, \quad \text{where } Z \sim \mathcal{N}(0,1).
\]
This matches the state evolution recursion and completes the inductive step.
\end{proof}

\begin{remark}
Proposition~\ref{prop:SE_from_delta_zero_trees} establishes that, for any fixed depth $t$ 
and polynomial nonlinearities $f_0,\dots,f_{t-1}$, the variance predicted by the AMP 
Moment Algebra coincides exactly with the state evolution (SE) recursion. 
Since Theorem~\ref{thm:universality_algebra_elements} already shows universality for the Wick pairing 
functional on the algebra of rooted trees, it follows immediately that SE holds for any 
finite number of iterations $t$ and any choice of polynomial nonlinearities in the setting considered here. Note for odd moments $(x_i^t)^{2m+1}$ both sides of \eqref{eq:se_pairings} are $0$. In particular, since there is an odd number of edges incident to the root, the excess of any tree in the expansion is positive $(\Delta>0)$ by Lemma \ref{lem:delta_0_perfect_pairing}.
\end{remark}

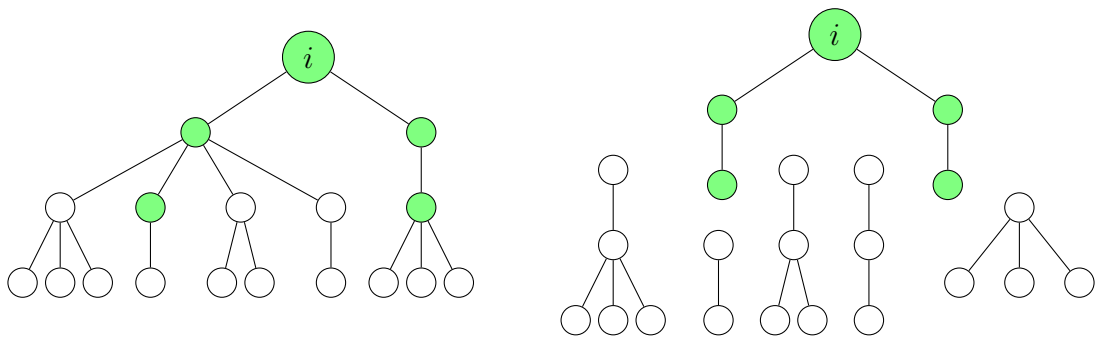
\begin{figure}[h]
\centering
\begin{tikzpicture}[level distance=1cm,
  level 1/.style={sibling distance=3cm},
  level 2/.style={sibling distance=1.2cm},
  level 3/.style={sibling distance=0.5cm}]

\node[circle,draw,fill=green!50] (r1) at (0,0) {\(i\)}
  child {node[circle,draw,fill=green!50] (a1) {}
    child {node[circle,draw,fill=white] (a11) {}
        child {node[circle,draw,fill=white] (a111) {}}
        child {node[circle,draw,fill=white] (a112) {}}
        child {node[circle,draw,fill=white] (a113) {}}
    }
    child {node[circle,draw,fill=green!50] (a12) {}
        child {node[circle,draw,fill=white] (a121) {}}}
    child {node[circle,draw,fill=white] (a13) {}
        child {node[circle,draw,fill=white] (a131) {}}
        child {node[circle,draw,fill=white] (a132) {}}}
    child {node[circle,draw,fill=white] (a14) {}
        child {node[circle,draw,fill=white] (a141) {}}}
  }
  child {node[circle,draw,fill=green!50] (a2) {}
    child {node[circle,draw,fill=green!50] (a21) {}
         child {node[circle,draw,fill=white] (a211) {}}
        child {node[circle,draw,fill=white] (a212) {}}
        child {node[circle,draw,fill=white] (a213) {}}}
  };

\node[circle,draw,fill=green!50] (r1) at (7,0.3) {\(i\)}
  child {node[circle,draw,fill=green!50] (a1) {}
    child {node[circle,draw,fill=green!50] (a21){}
  }
  }
  child {node[circle,draw,fill=green!50] (a1) {}
    child {node[circle,draw,fill=green!50] (a21){}
  }
  };

\node[circle,draw, right=0.25cm] (r1) at (9,-2) {}
  child {node[circle,draw,right=2cm] (a1) {}
  }
  child {node[circle,draw] (a1) {}
  }
  child {node[circle,draw, left=2cm] (a21){}
  };

  \node[circle,draw, right=0.25cm] (r1) at (3.6,-1.5) {}
  child {node[circle,draw] (a1) {}
      child {node[circle,draw, right=0.5cm] (a2) {}
      }
      child {node[circle,draw] (a2) {}
      }
      child {node[circle,draw, left=0.5cm] (a2) {}
      }
  };

\node[circle,draw, right=0.25cm] (r1) at (5,-2.5) {}
  child {node[circle,draw] (a1) {}
  };

\node[circle,draw, right=0.25cm] (r1) at (6,-1.5) {}
  child {node[circle,draw] (a1) {}
      child {node[circle,draw, right=0.15cm] (a2) {}
      }
      child {node[circle,draw, left=0.15cm] (a2) {}
      }
  };

\node[circle,draw, right=0.25cm] (r1) at (7,-1.5) {}
  child {node[circle,draw] (a1) {}
      child {node[circle,draw] (a2) {}
      }
  };
  
\end{tikzpicture}
\caption{A choice of a subtree (Green vertices) and the forest obtain by cutting along the boundary of the subtree}
\label{fig:tree_splitting}
\end{figure}

\FloatBarrier

\begin{lem} \label{lem:factor_val_forest}
    Let $\mathcal T$ be any labeled monomial tree, and let $\mathcal S\subset \mathcal T$ denote a subtree of $\mathcal T$ containing the root of $\mathcal T$. Let $\mathcal F$ be the forest consisting of $\mathcal S$ and the forest obtained by cutting $\mathcal S$ out from $\mathcal T$. That is, $\mathcal F$ consists of $\mathcal S$, and for each edge $e$ between a vertex of $\mathcal S$ on $\mathcal T\backslash\mathcal S$, there is a tree rooted at $e$. See Figure \ref{fig:tree_splitting}. Then $\mathcal F$ inherits labels from $\mathcal T$, and with this labeling, we have that 
    \[
    \mathrm{Val}(\mathcal T)= \prod_{\mathcal T_i\in \mathcal F}\mathrm{Val}(\mathcal T_i)
    \]
    That is, the RHS is a product over the trees in the forest $\mathcal F$.
\end{lem}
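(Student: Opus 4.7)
The plan is a direct bookkeeping argument, verifying the identity factor-by-factor in the three independent multiplicative pieces of the $\mathrm{Val}$ functional: the vertex coefficients $\prod_v c_v$, the edge matrix entries $\prod_{(u,v)} A_{\ell(u)\ell(v)}$, and the leaf initializations $\prod_\ell x^0_\ell$. Before computing, I would pin down the convention for how each tree $\mathcal T_i \in \mathcal F$ inherits its vertex, edge, and leaf sets from $\mathcal T$. Following the figure, $\mathcal F$ consists of $\mathcal S$ together with, for each boundary edge $e = (u,v)$ of $\mathcal S$ (with $u \in \mathcal S$ and $v \in \mathcal T \setminus \mathcal S$), a cut-off tree $\mathcal T_v$ containing $e$ and the full subtree of $\mathcal T$ below $v$. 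The vertex $u$ serves as a stub root of $\mathcal T_v$ so that $e$ has a parent endpoint inside the tree; it is not counted in $\prod_{w \in V(\mathcal T_v)} c_w$ (it is already counted in $\mathcal S$), and a vertex of some $\mathcal T_i$ is declared a leaf of $\mathcal T_i$ only if it is a leaf of the original tree $\mathcal T$.

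With these conventions in place, the identity follows from three set-theoretic observations. First, the edge sets $E(\mathcal T_i)$ form a disjoint partition of $E(\mathcal T)$, since every edge of $\mathcal T$ is either internal to $\mathcal S$, a boundary edge, or internal to some cut-off subtree. Second, the non-stub vertex sets partition $V(\mathcal T)$, so $\prod_{\mathcal T_i \in \mathcal F}\prod_{v \in V(\mathcal T_i)} c_v = \prod_{v \in V(\mathcal T)} c_v$. Third, the leaf sets $L(\mathcal T_i)$ partition $L(\mathcal T)$. Multiplying the three factorizations and comparing with the definition of $\mathrm{Val}(\mathcal T)$ gives the claim.

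As an alternative, one can induct on $|\mathcal F|$, or equivalently on the number of boundary edges of $\mathcal S$. The base case $|\mathcal F|=1$ reduces to $\mathrm{Val}(\mathcal T) = \mathrm{Val}(\mathcal T)$. For the inductive step, pick one boundary edge $e=(u,v)$ and re-attach its associated cut-off tree $\mathcal T_v$ to $\mathcal S$ to form $\mathcal S'$ with one fewer boundary edge. The inductive hypothesis applied to $\mathcal S'$ reduces the problem to the local identity $\mathrm{Val}(\mathcal S') = \mathrm{Val}(\mathcal S)\cdot \mathrm{Val}(\mathcal T_v)$, which is immediate from the multiplicative structure of $\mathrm{Val}$.

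I do not anticipate a serious obstacle: the lemma is essentially a restatement of the multiplicativity of $\mathrm{Val}$ once the forest decomposition is spelled out. The only subtlety is the bookkeeping at boundary vertices -- ensuring that the $c_u$ factor of a boundary vertex $u$ is counted exactly once (in $\mathcal S$) rather than duplicated across all cut-off subtrees attached to $u$, and ensuring that $x^0$ factors are contributed only by vertices that are genuine leaves of $\mathcal T$.
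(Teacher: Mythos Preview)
Your proposal is correct and follows the same approach as the paper. The paper's proof is a one-line remark that the identity follows from the definition of $\mathrm{Val}$, together with the convention that the root coefficient $c_{r_i}$ of each cut-off tree $\mathcal T_i \ne \mathcal S$ is omitted to avoid double counting; your bookkeeping argument spells out exactly this, and your attention to the boundary-vertex convention matches the paper's single caveat precisely.
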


\begin{proof}
    The proof follows from the definition of $\mathrm{Val}$. Note that in $\prod_{\mathcal T_i\in \mathcal F}\mathrm{Val}(\mathcal T_i)$ we multiply by the coefficients $c_v$ for each vertex $v\in \mathcal T$ one time, meaning that we do not include the factor $c_{r_i}$ in $\mathrm{Val}(\mathcal T_i)$ where $r_i$ is the root vertex of the tree $\mathcal T_i\ne \mathcal S$. 
\end{proof}

Note that Proposition \ref{prop:SE_from_delta_zero_trees} considers unlabeled trees and their Wick pairings, where $\mathrm{Wick}(T)$ denotes the number of isomorphism classes we can produce by Wick pairing the unlabeled tree $T$. To extend \eqref{eq:se_pairings} to a sum over labeled trees, we will need the following Proposition. This will allow us to account for the error incurred by counting only injective labelings of isomorphism classes.
\begin{prop} \label{prop:factorial_formula}
    Let $(N)_k=N(N-1)\dots (N-k+1)$ and let $P_k$ denote the set of partitions on $k$ elements. Then the following equality holds.
    \[
    (N)_k=\sum_{\sigma\in P_k}(-1)^{k-|\sigma|}\left(\prod_{B\in \sigma}(|B|-1)!\right)N^{|\sigma|}
    \]
    Where $B$ denotes a block of $\sigma$ and $|\sigma|$ denotes the number of blocks in $\sigma$.
\end{prop}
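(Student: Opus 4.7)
The plan is to recognize the right-hand side as the classical cycle-index expansion of the falling factorial. Recall the standard identity $(N)_k = \sum_{j=0}^k s(k,j)\,N^j$, where $s(k,j) = (-1)^{k-j} c(k,j)$ are the signed Stirling numbers of the first kind and $c(k,j)$ counts permutations of $[k]$ with exactly $j$ cycles. I would first establish this expansion by induction on $k$, using $(N)_k = (N)_{k-1}(N-k+1)$, which corresponds to the Pascal-type recurrence $c(k,j) = c(k-1,j-1) + (k-1)\,c(k-1,j)$ (insert $k$ as a new singleton cycle, or splice it into one of the $k-1$ slots of the existing cycles of a permutation in $S_{k-1}$).

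The second step is a purely combinatorial reorganization of $c(k,j)$ as a sum over set partitions of $[k]$ with $j$ blocks. Given $\pi \in S_k$ with $j$ cycles, the cycle supports form a partition $\sigma \in P_k$ with $|\sigma| = j$; conversely, from any such $\sigma$ one recovers a permutation by choosing a cyclic ordering of each block $B \in \sigma$, and a set of size $|B|$ admits exactly $(|B|-1)!$ distinct cyclic orderings. Hence
\[
c(k,j) = \sum_{\sigma \in P_k,\,|\sigma|=j} \prod_{B \in \sigma}(|B|-1)!.
\]
Substituting this into the Stirling expansion of $(N)_k$ and collecting the sign $(-1)^{k-j} = (-1)^{k-|\sigma|}$ yields the claimed identity.

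A more direct route, which I would record as an alternative, uses exponential generating functions. From $\sum_{k\ge 0}(N)_k\,\tfrac{x^k}{k!} = (1+x)^N = \exp\bigl(N\sum_{m\ge 1}\tfrac{(-1)^{m-1}}{m}x^m\bigr)$ and the exponential formula $\exp\bigl(\sum_m a_m\tfrac{x^m}{m!}\bigr) = \sum_k \tfrac{x^k}{k!}\sum_{\sigma \in P_k}\prod_{B\in\sigma} a_{|B|}$ with $a_m = (-1)^{m-1}(m-1)!\,N$, the coefficient of $x^k/k!$ on the right-hand side reads exactly $\sum_{\sigma \in P_k}(-1)^{k-|\sigma|}\prod_{B}(|B|-1)!\,N^{|\sigma|}$. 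There is no real obstacle here; the proposition is a classical identity and the only thing to watch is bookkeeping of the sign $(-1)^{k-|\sigma|}$ and the factor $N^{|\sigma|}$, both of which factorize cleanly across the blocks of $\sigma$.
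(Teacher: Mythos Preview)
Your proposal is correct and follows essentially the same route as the paper: expand $(N)_k$ via the signed Stirling numbers of the first kind and then rewrite $c(k,j)$ as a sum over set partitions using the bijection between permutations and partitions equipped with a cyclic order on each block. The paper simply cites the Stirling expansion as standard rather than proving it by induction, and omits your EGF alternative, but the argument is the same.
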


\begin{proof}
    There is a standard formula that $(N)_k=\sum_{r=0}^ks(k, r)\cdot N^r$, where $s(k, r)$ denotes the Stirling numbers of the first kind. Here $s(k, r)$ denotes the number of permutations of $k$ elements with $r$ cycles times the sign $(-1)^{k-r}$. Given partition $\sigma\in P_k$, we can produce a permutation by ordering the elements in each block into a cyclic order. This can be done in $\prod_{B\in \sigma}(|B|-1)!$ ways, which results in $s(k, r)=(-1)^{k-r}\sum_{\sigma\in P_k:|\sigma|=r}\prod_{B\in \sigma}(|B|-1)!$. Plugging in the formula for $s(k, r)$ gives the desired formula.
\end{proof}

Currently, we have proven that,
\[
\tau_s^{2m} \cdot (2m-1)!! \;=\;
\sum_{T \in T_{i,s,2m}} \mathrm{Wick}(T) \cdot \prod_{v \in V(T)} c_v \cdot \prod_{\ell \in L(T)} \tau_0.
\]
Our goal will be to convert the RHS into a sum of the values of the labeled $\Delta=0$ trees, with an error term of $O(N^{-c})$ for a constant $c>0$.

Consider an isomorphism class $\mathcal I$ and write $\mathrm{Val}(\mathcal I)=\sum_{\mathcal T_\phi\in \mathcal I}\mathrm{Val}(\mathcal T_\phi)$; i.e., the value of an isomorphism class is the sum of the values of the trees contained in that isomorphism class. Let $V(\mathcal I)$ denote the set of non-root vertices of $\mathcal I$ where we view $\mathcal I$ as a quotient of $T_\mathcal I$ modulo the vertex label relation. The number of isomorphism classes of $\Delta=0$ graphs contained in a single unlabeled tree $T$ is $\mathrm{Wick}(T)$; thus, we need to show that \[\mathbb E\mathrm{Val}(\mathcal I)\approx\prod_{v \in V(T)} c_v \cdot \prod_{\ell \in L(T)}\tau_0\]
For an isomorphism class $\mathcal I$ produced by pairing the vertices of an unlabeled tree $T$.

\begin{defn} \label{defn:val_2_value}
    Let $\mathcal I$ be an isomorphism class produced from the unlabeled tree $T$ with $\Delta(\mathcal I)=0$. Consider the sum $\overline{\mathrm{Val}}(\mathcal I)$ defined as follows:
    \[\overline{\mathrm{Val}}(\mathcal I)=N^{-|V(\mathcal I)|}\sum_{\phi:V(\mathcal I)\to [N]} \prod_{v \in V(T)}  c_v \cdot \prod_{\ell \in L(T)}\tau_0\]
    Where on the RHS, we sum over all labelings $\phi:V(\mathcal I)\to [N]$ (not necessarily injective) of the vertices of $\mathcal I$. Note that equivalently, \[\overline{\mathrm{Val}}(\mathcal I) = \prod_{v \in V(T)} c_v \cdot \prod_{\ell \in L(T)}\tau_0\]
    Since the total number of possible labelings is precisely $N^{|V(\mathcal I)|}$.
\end{defn}

\begin{lem}
    \label{lem:delta_0_val_2}
    Let $\mathcal I_{i, s, 2m,0}$ denote the set of isomorphism classes of rooted trees with $\Delta(\mathcal I)=0$ in the expansion of the iterate $(x_i^t)^{2m}$. The following equality holds,
    \[
    \sum_{\mathcal I\in \mathcal I_{i, s, 2m,0}}\overline{\mathrm{Val}}(\mathcal I)=\tau_t^{2m}\cdot(2m-1)!!
    \]
\end{lem}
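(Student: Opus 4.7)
The plan is to carry out a short bookkeeping reduction that identifies the left-hand side directly with the right-hand side of equation \eqref{eq:se_pairings} in Proposition~\ref{prop:SE_from_delta_zero_trees}.

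First, the plan is to reorganize the sum by grouping $\Delta=0$ isomorphism classes according to their underlying unlabeled tree shape $T_{\mathcal I}\in T_{i,t,2m}$:
\[
\sum_{\mathcal I\in \mathcal I_{i, t, 2m,0}}\overline{\mathrm{Val}}(\mathcal I)
\;=\; \sum_{T\in T_{i,t,2m}}\ \sum_{\substack{\mathcal I\,:\,T_{\mathcal I}=T\\ \Delta(\mathcal I)=0}}\overline{\mathrm{Val}}(\mathcal I).
\]
Next, two observations collapse the inner sum. The first is that, by Lemma~\ref{lem:delta_0_perfect_pairing} combined with Definition~\ref{defn:tree_wick_pairings}, the $\Delta=0$ isomorphism classes $\mathcal I$ with a fixed underlying shape $T$ are precisely the row-wise Wick pairings of $T$, so the inner sum has exactly $\mathrm{Wick}(T)$ terms. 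The second is that, by the parenthetical in Definition~\ref{defn:val_2_value}, one has $\overline{\mathrm{Val}}(\mathcal I) = \prod_{v \in V(T)} c_v \cdot \prod_{\ell \in L(T)}\tau_0$; in particular, this value depends only on the shape $T$, not on which Wick pairing produced $\mathcal I$.

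Combining these two observations yields
\[
\sum_{\mathcal I\in \mathcal I_{i, t, 2m,0}}\overline{\mathrm{Val}}(\mathcal I)
\;=\; \sum_{T\in T_{i,t,2m}} \mathrm{Wick}(T) \cdot \prod_{v \in V(T)} c_v \cdot \prod_{\ell \in L(T)}\tau_0,
\]
and the right-hand side equals $\tau_t^{2m}\cdot(2m-1)!!$ by Proposition~\ref{prop:SE_from_delta_zero_trees}.

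There is no genuine obstacle here: the lemma is essentially a change of viewpoint, translating the ``unlabeled trees weighted by $\mathrm{Wick}(T)$'' identity of the previous subsection into the ``sum of normalized values $\overline{\mathrm{Val}}$ over $\Delta=0$ isomorphism classes'' identity that will be the natural target for the universality estimate in the next subsection. The one point worth double-checking is the convention in Definition~\ref{defn:val_2_value}: one must verify that averaging the constant integrand $\prod_{v \in V(T)} c_v \cdot \prod_{\ell \in L(T)}\tau_0$ over all (not necessarily injective) labelings $\phi: V(\mathcal I)\to [N]$ and then dividing by $N^{|V(\mathcal I)|}$ really does recover the unnormalized product; this is immediate since the summand is independent of $\phi$ and there are $N^{|V(\mathcal I)|}$ such labelings.
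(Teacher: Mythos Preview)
Your proposal is correct and follows essentially the same approach as the paper: group the $\Delta=0$ isomorphism classes by their underlying unlabeled shape $T$, observe that there are exactly $\mathrm{Wick}(T)$ such classes per shape and that $\overline{\mathrm{Val}}(\mathcal I)$ depends only on $T$, then invoke Proposition~\ref{prop:SE_from_delta_zero_trees}. The paper's proof is just a more compressed version of exactly this argument.
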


\begin{proof}
    For each unlabeled tree $T\in T_{i, t, 2m}$ in the expansion of $(x_i^t)^{2m}$, there are $\mathrm{Wick}(T)$ isomorphism classes that can be produced by pairing the vertices of $T$. By Definition \ref{defn:val_2_value}, we have that $\overline{\mathrm{Val}}(\mathcal I) = \prod_{v \in V(T)} c_v \cdot \prod_{\ell \in L(T)}\tau_0$. Hence, by Proposition \ref{prop:SE_from_delta_zero_trees} the claim holds.
\end{proof}

\begin{lem}
\label{lem:non_injective}
     Let $V(\mathcal I)$ denote the set of non-root vertices of an isomorphism class $\mathcal I$. We use the notation $T_{\mathcal I}$ to denote the underlying unlabeled tree that produced the isomorphism class $\mathcal I$. We have the following,
    \begin{align} \label{eq:difference_injective}
    \sum_{\mathcal I\in \mathcal I_{i, s, m,0}}\left(\overline{\mathrm{Val}}(\mathcal I)-\mathbb E\mathrm{Val}(\mathcal I)\right)=\sum_{\mathcal I\in \mathcal I_{i, s, m,0}}\sum_{\substack{\phi:V(\mathcal I)\to [N]\\ \phi \textrm{ non-injective}}} N^{-|V(\mathcal I)|} \prod_{v \in V(T_\mathcal I)}  c_v \cdot \prod_{\ell \in L(T_\mathcal I)}\tau_0
    \end{align}
\end{lem}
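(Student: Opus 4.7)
The identity is essentially a bookkeeping statement: $\overline{\mathrm{Val}}(\mathcal I)$ sums a $\phi$-independent weight over \emph{all} maps $\phi:V(\mathcal I)\to[N]$, while $\mathbb E\mathrm{Val}(\mathcal I)$ collects the weights coming from the \emph{injective} labelings only. The plan is to unfold both sides, split $\overline{\mathrm{Val}}(\mathcal I)$ into injective and non-injective contributions, and match the injective part with $\mathbb E\mathrm{Val}(\mathcal I)$ using the perfect Wick-pairing structure of $\Delta=0$ classes; the non-injective part then falls out as the claimed remainder.

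Concretely, Definition~\ref{defn:val_2_value} immediately gives
\[
\overline{\mathrm{Val}}(\mathcal I)
=\sum_{\phi\text{ injective}}N^{-|V(\mathcal I)|}\prod_{v\in V(T_\mathcal I)}c_v\prod_{\ell\in L(T_\mathcal I)}\tau_0
+\sum_{\phi\text{ non-injective}}N^{-|V(\mathcal I)|}\prod_{v\in V(T_\mathcal I)}c_v\prod_{\ell\in L(T_\mathcal I)}\tau_0,
\]
since the summand does not depend on $\phi$. Summed over $\mathcal I\in\mathcal I_{i,s,m,0}$, the second sum is already the right-hand side of \eqref{eq:difference_injective}, so the work reduces to identifying the first sum with $\mathbb E\mathrm{Val}(\mathcal I)=\sum_{\mathcal T_\phi\in\mathcal I}\mathbb E\mathrm{Val}(\mathcal T_\phi)$, whose outer sum ranges over injective labelings consistent with $\mathcal I$.

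The heart of the argument is to show that each injective $\phi$ compatible with $\mathcal I$ contributes exactly $N^{-|V(\mathcal I)|}\prod c_v\prod\tau_0$ to $\mathbb E\mathrm{Val}(\mathcal I)$. Since $\Delta(\mathcal I)=0$, Lemma~\ref{lem:delta_0_perfect_pairing} supplies a perfect row-wise Wick pairing of $\mathcal T_\phi$: every edge label pair $(i,j)$ and every leaf label appears with multiplicity exactly two. Independence of the $A_{ij}$'s (mean zero, $\mathbb E A_{ij}^2=1/N$), independence of the $x^0_\ell$'s, and independence of the two families then factor $\mathbb E\mathrm{Val}(\mathcal T_\phi)$ across matched pairs. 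Each edge pair contributes $1/N$, and the identity $|E(T_\mathcal I)|/2=|V(\mathcal I)|$ (which comes from $\Delta(\mathcal I)=0$) makes the $N$-powers line up to $N^{-|V(\mathcal I)|}$; the deterministic coefficients give $\prod c_v$; and the leaf-pair expectations deliver the $\prod_\ell\tau_0$ factor under the paper's leaf-weight convention. Subtracting the injective sums from $\overline{\mathrm{Val}}(\mathcal I)$ yields \eqref{eq:difference_injective}. I do not foresee any real obstacle here, since this identity is essentially the statement that $\overline{\mathrm{Val}}$ is the ``all labelings'' total while $\mathbb E\mathrm{Val}$ is the ``injective labelings'' total; the Wick-pairing lemma makes the factorization of the expectation automatic, and the rest is labeling-set arithmetic.
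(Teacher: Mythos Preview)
Your proposal is correct and follows essentially the same route as the paper: split $\overline{\mathrm{Val}}(\mathcal I)$ into injective plus non-injective labelings, use the $\Delta=0$ Wick-pairing structure (Lemma~\ref{lem:delta_0_perfect_pairing}) together with $\mathbb E A_{ij}^2=1/N$ and $\mathbb E(x_i^0)^2=\tau_0$ to evaluate $\mathbb E\mathrm{Val}(\mathcal T_\phi)$ for each injective $\phi$, and observe that this matches the injective piece of $\overline{\mathrm{Val}}(\mathcal I)$ so that only the non-injective sum survives in the difference.
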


\begin{proof}
    Recall that for a labeled tree,
    \begin{align*}
    \operatorname{Val}(\mathcal{T}_\phi) = \prod_{v \in V(\mathcal{T})} c_v \cdot \prod_{(u, v) \in E(\mathcal{T})} A_{\phi(u)\phi(v)} \cdot \prod_{\ell \in L(\mathcal{T})} x^0_\ell.
    \end{align*}
    For the isomorphism class $\mathcal I$, we have that,
    \begin{align}
    \label{eq:val_I_injective}
    \mathbb E\mathrm{Val}(\mathcal I)=\sum_{\mathcal T_\phi\in \mathcal I}\mathbb E\mathrm{Val}(\mathcal T_\phi) = \sum_{\substack{\phi:V(\mathcal I)\to [N]\\ \phi \textrm{ injective}}} \mathbb E\left[\prod_{(u, v)\in E(T_\mathcal I)}A_{\phi(u)\phi(v)} \prod_{v \in V(T_\mathcal I)}  c_v \cdot \prod_{\ell \in L(T_\mathcal I)}x_{\phi(\ell)}^0\right]
    \end{align}
    Now observe that for each injective labeling $\phi$, the factor  $\mathbb E\prod_{(u,v)\in E(T_\mathcal I)} A_{\phi(u)\phi(v)}$ equals $N^{|V(\mathcal I)|}$ since each edge appears twice in a $\Delta=0$ labeled tree or isomorphism class, and the number of edges in $T_{\mathcal I}$ is twice the number of vertices in $V(\mathcal I)$ (recall $\mathbb EA_{ij}^2=1/N$). We can replace $\prod_{\ell \in L(T_\mathcal I)}x_{\phi(\ell)}^0$ with $\prod_{\ell \in L(T_\mathcal I)}\tau_0$, since $\mathbb E(x_i^0)^2=\tau_0$, the $x^0$ are independent of $A$, and leaf vertices come in pairs since $\Delta=0$. 
    Therefore, the RHS of \eqref{eq:val_I_injective} becomes,
    \begin{align*}   \sum_{\substack{\phi:V(\mathcal I)\to [N]\\ \phi \textrm{ injective}}} N^{-|V(\mathcal I)|} \prod_{v \in V(T_\mathcal I)}  c_v \cdot \prod_{\ell \in L(T_\mathcal I)}\tau_0
    \end{align*}
    Therefore, taking the difference as in \eqref{eq:difference_injective} for each isomorphism class $\mathcal I$, all that remains are the non-injective labelings, which proves the claim.
\end{proof}

It remains to bound the RHS of \eqref{eq:difference_injective}.

\begin{defn} \label{defn:subtree_sigma}
    Let $\mathcal I$, $(\Delta(\mathcal I)=0)$ be a tree isomorphism class with $|V(\mathcal I)|=k$ vertices and let $\sigma$ be a partition of $P_k$. Let $\sigma(\mathcal I)$ denote the subtree of $\mathcal I$ constructed in the following way: 
    \begin{itemize}
        \item[$(i)$] For each vertex $v\in \mathcal I$ such that $v$ is in a block of $\sigma$ with more than one element add $v$ to $\sigma(\mathcal I)$ and add all siblings of $v$ in $\mathcal I$ to $\sigma(\mathcal I)$.
        \item[$(ii)$] For each vertex added to $\sigma(\mathcal I)$ in step $(i)$, add the branch $b(v)$ that connects $v$ to the root (branches may overlap) and all siblings of the vertices on the branch $b(v)$.
    \end{itemize}
    See Figure \ref{fig:tree_subtree}.
\end{defn}

\begin{figure}[h]
\centering
\begin{tikzpicture}[level distance=1cm,
  level 1/.style={sibling distance=3cm},
  level 2/.style={sibling distance=1.2cm},
  level 3/.style={sibling distance=0.6cm}
  ]
\tikzset{
  small node/.style={
    circle,
    draw,
    minimum size=15pt,   
    inner sep=0pt,      
    font=\scriptsize,   
  }
}

\tikzset{
  rail/.style={
    draw=white,            
    line width=1.5pt,      
    double=black,          
    double distance=2pt, 
    line cap=butt,         
    double cap=butt,       
    shorten <=0.6pt,       
    shorten >=0.6pt        
  },
  every child/.style={
    edge from parent path={[my dbl] (\tikzparentnode) -- (\tikzchildnode)}
  }
}

\node[circle,draw,fill=white] (r1) at (0,0) {1}
  child {node[circle,draw,fill=white, minimum size=15pt, inner sep=0pt] (a1) {2}
    child {node[circle,draw,fill=red!50, minimum size=15pt, inner sep=0pt] (a11) {4}
        child {node[circle,draw,fill=green!50, minimum size=15pt, inner sep=0pt] (a111) {9}}
        child {node[circle,draw,fill=white, minimum size=15pt, inner sep=0pt] (a112) {10}}
        child {node[circle,draw,fill=white, minimum size=15pt, inner sep=0pt] (a113) {11}}
    }
    child {node[circle,draw,fill=green!50, minimum size=15pt, inner sep=0pt] (a12) {5}
        child {node[circle,draw,fill=white, minimum size=15pt, inner sep=0pt] (a121) {12}}}
    child {node[circle,draw,fill=white, minimum size=15pt, inner sep=0pt] (a13) {6}
        child {node[circle,draw,fill=green!50, minimum size=15pt, inner sep=0pt] (a131) {13}}
        child {node[circle,draw,fill=white, minimum size=15pt, inner sep=0pt] (a132) {14}}}
    child {node[circle,draw,fill=white, minimum size=15pt, inner sep=0pt] (a14) {7}
        child {node[circle,draw,fill=red!50, minimum size=15pt, inner sep=0pt] (a141) {15}}}
  }
  child {node[circle,draw,fill=white, minimum size=15pt, inner sep=0pt] (a2) {3}
    child {node[circle,draw,fill=white, minimum size=15pt, inner sep=0pt] (a21) {8}
         child {node[circle,draw,fill=white, minimum size=15pt, inner sep=0pt] (a211) {16}}
        child {node[circle,draw,fill=white, minimum size=15pt, inner sep=0pt] (a212) {17}}
    }
  };

\node[circle,draw,fill=white] (r1) at (7.5,0) {1}
  child {node[circle,draw,fill=white, minimum size=15pt, inner sep=0pt] (a1) {2}
    child {node[circle,draw,fill=red!50, minimum size=15pt, inner sep=0pt] (a11) {4}
        child {node[circle,draw,fill=green!50, minimum size=15pt, inner sep=0pt] (a111) {9}}
        child {node[circle,draw,fill=white, minimum size=15pt, inner sep=0pt] (a111) {10}}
        child {node[circle,draw,fill=white, minimum size=15pt, inner sep=0pt] (a111) {11}}
    }
    child {node[circle,draw,fill=green!50, minimum size=15pt, inner sep=0pt] (a12) {5}}
    child {node[circle,draw,fill=white, minimum size=15pt, inner sep=0pt] (a13) {6}
        child {node[circle,draw,fill=green!50, minimum size=15pt, inner sep=0pt] (a131) {13}}
        child {node[circle,draw,fill=white, minimum size=15pt, inner sep=0pt] (a131) {14}}}
    child {node[circle,draw,fill=white, minimum size=15pt, inner sep=0pt] (a14) {7}
        child {node[circle,draw,fill=red!50, minimum size=15pt, inner sep=0pt] (a141) {15}}}
  }
  child {node[circle,draw,fill=white, minimum size=15pt, inner sep=0pt] (a1) {3}};

\end{tikzpicture}
\caption{(Left) A tree isomorphism class $\mathcal I$ with $|V(\mathcal I)|=17$ vertices. Double edges denote the Wick pairing of an unlabeled tree. We choose the partition $\sigma=\{\{4, 15\}, \{9,5,13\},\dots\}\in P_{17}$ i.e. the white vertices are singleton blocks. (Right) The tree $\sigma(\mathcal I)$ as in Definition \ref{defn:subtree_sigma}, which consists of the colored vertices, the siblings of the colored vertices, the branches connecting them to the root, and the siblings of the vertices on the branches.}
\label{fig:tree_subtree}
\end{figure}
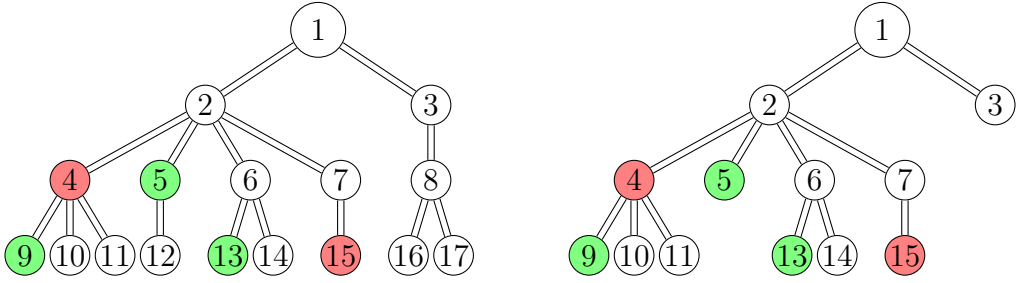

\begin{defn}
\label{defn:sim_classes}
Let $\mathcal{I}_1$ and $\mathcal{I}_2$ be two tree isomorphism classes satisfying $\Delta(\mathcal{I}_1) = \Delta(\mathcal{I}_2) = 0$, with $k_1$ and $k_2$ vertices, respectively.  
Let $\sigma \in P_{k_1}$ and $\tau \in P_{k_2}$ be set partitions.  
Suppose further that $\mathcal{I}_1$ and $\mathcal{I}_2$ arise from Wick pairings of unlabeled trees $T_1$ and $T_2$.

We say that the pairs $(\mathcal{I}_1, \sigma)$ and $(\mathcal{I}_2, \tau)$ are \textbf{similar}, written
\[
(\mathcal{I}_1, \sigma) \sim (\mathcal{I}_2, \tau),
\]
if the subtrees $\sigma(\mathcal{I}_1)$ and $\tau(\mathcal{I}_2)$ are obtained in an identical way from their respective Wick pairings of $T_1$ and $T_2$, and if the blocks of non-singleton vertices coincide between $\sigma(\mathcal{I}_1)$ and $\tau(\mathcal{I}_2)$.  

Equivalently, $(\mathcal{I}_1, \sigma) \sim (\mathcal{I}_2, \tau)$ whenever the same subset of edges in $T_1$ and $T_2$ are paired under the Wick procedure, and the partitions $\sigma$ and $\tau$ have identical configurations of non-singleton blocks within the resulting subtrees.
\end{defn}

The Definition \ref{defn:sim_classes} is motivated by the following Proposition, which allows for the control of the sum over similarity classes. In summary, the proposition states that we can fix the subtree $\sigma(\mathcal I)$ and consider all possible ``extensions" to a full isomorphism class $\mathcal J$. For each leaf vertex $v$ in $\sigma(\mathcal I)$, if we sum over all possible extensions, we will get exactly the term $\tau_{s_v}^2$, where $s_v$ denotes the generation of $v$. 

\begin{prop}
\label{prop:similarity_sum}
Let $\mathcal{I}$ be a tree isomorphism class with $\Delta(\mathcal{I}) = 0$ and $|V(\mathcal{I})| = k$, and let $\sigma \in P_k$ be a partition.  
Denote by $\mathcal{S}(\mathcal{I}, \sigma)$ the similarity class of $(\mathcal{I}, \sigma)$, consisting of all pairs $(\mathcal J, \tau)$ such that $(\mathcal J, \tau) \sim (\mathcal{I}, \sigma)$, where each $\mathcal J$ belongs to $\mathcal{I}_{i,s,2m,0}$, and $\tau$ is a partition of its vertex set.

Let $T_{\mathcal{I}}$ denote the underlying monomial tree whose Wick pairing produces the class $\mathcal{I}$, and let $T_{\sigma(\mathcal{I})}$ denote the corresponding subtree of $T_{\mathcal{I}}$ that was paired to form the subtree associated with $\sigma(\mathcal{I})$.

Then
\[
\sum_{(\mathcal J, \tau) \in \mathcal{S}(\mathcal{I}, \sigma)}
\left(
  \prod_{v \in V(T_{\mathcal J})} c_v
  \cdot
  \prod_{\ell \in L(T_{\mathcal J})} \tau_0
\right)
=
\left(
  \prod_{v \in V(T_{\sigma(\mathcal{I})})} c_v
\right)
\cdot
\left(
  \prod_{v \in L(\sigma(\mathcal{I}))} \tau_{s_v}^2
\right),
\]
where $s_v$ denotes the generation (depth) of the vertex $v$ in the monomial tree, and $L(\sigma(\mathcal I))$ denotes the set of leaf vertices of $\sigma(\mathcal I)$.
\end{prop}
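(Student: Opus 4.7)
The plan is to exploit the rigid structure imposed by fixing the similarity class: every $(\mathcal J,\tau)\in\mathcal S(\mathcal I,\sigma)$ shares the same subtree $\sigma(\mathcal I)$ with its internal Wick pairing and non-singleton block structure, and the only freedom left is in the extensions of $T_\mathcal J$ hanging below the leaves of $\sigma(\mathcal I)$. By construction, $\sigma(\mathcal I)$ is closed under taking siblings on branches to non-singleton blocks, so its leaves inherit the Wick pairing from $\mathcal I$ and come in matched pairs $(v,v')$ of common generation $s_v$. In any element of the similarity class, the subtrees $S_v,S_{v'}$ hanging below $v,v'$ must themselves form a Wick-paired pair, while distinct leaf pairs of $\sigma(\mathcal I)$ host their extensions independently.

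First, I would factor the summand using Lemma~\ref{lem:factor_val_forest}. Cutting $T_\mathcal J$ along the boundary of $T_{\sigma(\mathcal I)}$ decomposes it into $T_{\sigma(\mathcal I)}$ plus a forest of subtrees, one attached at each leaf of $\sigma(\mathcal I)$. Accordingly, the product $\prod_{v\in V(T_\mathcal J)} c_v \cdot \prod_{\ell\in L(T_\mathcal J)}\tau_0$ splits into the constant prefactor $\prod_{v\in V(T_{\sigma(\mathcal I)})} c_v$ times a product of extension factors, one per Wick-paired leaf pair $(v,v')$. Pulling the prefactor out of the sum reduces the problem to evaluating, for each leaf pair independently, the sum of the extension factor over admissible Wick-paired subtree pairs $(S_v,S_{v'})$.

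Second, for a fixed leaf pair $(v,v')$ at generation $s_v$, the admissible extensions are Wick-paired pairs of subtrees glued at the identified vertex $v\sim v'$. After gluing they become exactly the depth-$s_v$ monomial trees with two root-edges and $\Delta=0$ already enumerated by Proposition~\ref{prop:SE_from_delta_zero_trees} at $m=1$. Summing the corresponding products of monomial coefficients and leaf $\tau_0$ factors therefore yields $\tau_{s_v}^{2}\cdot(2\cdot 1-1)!! = \tau_{s_v}^{2}$. Taking the product of these contributions over all leaf pairs and multiplying by the global prefactor produces the right-hand side of the proposition.

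The main obstacle is the combinatorial bookkeeping behind the bijection in step one: one must show that elements of $\mathcal S(\mathcal I,\sigma)$ are in bijection with tuples $(S_v,S_{v'})_{(v,v')}$ of independent Wick-paired extensions, one per leaf pair of $\sigma(\mathcal I)$. Concretely this requires verifying (i) that by Definition~\ref{defn:sim_classes} all non-singleton blocks of $\tau$ already lie inside $\tau(\mathcal J)$, so every vertex outside $\sigma(\mathcal I)$ is a singleton and the label identifications do not propagate downward; (ii) that the Wick constraint on children can only connect across Wick-paired parents, so descendants of distinct leaf pairs of $\sigma(\mathcal I)$ occupy disjoint regions of $T_\mathcal J$ and the extension sums factor cleanly; and (iii) that the boundary case where a leaf of $\sigma(\mathcal I)$ is itself at generation $0$ is consistent with the formula under the paper's convention for the leaf factor.
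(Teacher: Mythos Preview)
Your proposal is correct and follows essentially the same approach as the paper: factor out the common subtree contribution $\prod_{v\in V(T_{\sigma(\mathcal I)})}c_v$, then sum independently over the Wick-paired extensions hanging below each leaf of $\sigma(\mathcal I)$, obtaining $\tau_{s_v}^2$ per leaf. The only cosmetic differences are that the paper phrases the extension sum in terms of individual leaves of the quotient $\sigma(\mathcal I)$ (invoking Lemma~\ref{lem:delta_0_val_2}) rather than your ``leaf pairs'' in $T_{\sigma(\mathcal I)}$ (invoking Proposition~\ref{prop:SE_from_delta_zero_trees} at $m=1$), and it does not explicitly cite Lemma~\ref{lem:factor_val_forest} for the factorization; both routes are equivalent.
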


\begin{proof}
For each element $(\mathcal J, \tau) \in \mathcal{S}(\mathcal{I}, \sigma)$ the subtree $\tau(\mathcal J)$ corresponds to the same pairing pattern of the underlying monomial tree as $\sigma(\mathcal I)$ and, hence, to the same subtree $T_{\sigma(\mathcal{I})}=T_{\tau(\mathcal J)}$.  
This means the contributions from the coefficients $c_v$ are the same for all such subtrees.  
Factoring out this common component yields
\[
\prod_{v \in V(T_{\sigma(\mathcal{I})})} c_v
\]

The remaining contribution comes from summing over all possible extensions of the leaves of the subtree $\sigma(\mathcal I)$ within the similarity class.  
For each leaf $v \in L(\sigma(\mathcal{I}))$ and element $(\mathcal J, \tau) \in \mathcal{S}(\mathcal{I}, \sigma)$, we can consider the subtree $\mathcal J_v$, which is the subtree of $\mathcal J$ rooted at $v$. Note that this makes sense since $\tau(\mathcal J)$ and $\sigma(\mathcal I)$ are the same.  Suppose $v$ belongs to generation $s_v$. Then we observe that $\mathcal J_v$ itself is an isomorphism class with $\Delta(\mathcal J_v)=0$ of trees of depth $s_v$. The subtree $\sigma(\mathcal I)$ is fixed, but there are no constraints on the independent Wick-paired subtrees rooted at its boundary. By the Lemma \ref{lem:delta_0_val_2} on summation over isomorphism classes, the total contribution of all such extensions $\mathcal J_v$ equals $\tau_{s_v}^2$.   Since these extensions are independent across leaves, the factors multiply:
\[
\prod_{v \in L(\sigma(\mathcal{I}))} \tau_{s_v}^2.
\]

Combining the contributions of the common bad subtree and the leaf extensions gives
\[
\prod_{v \in V(T_{\sigma(\mathcal{I})})} c_v
\cdot
\prod_{v \in L(\sigma(\mathcal{I}))} \tau_{s_v}^2,
\]
which is the desired identity. Note that if a vertex $v\in L(\sigma(\mathcal I))$ is also a leaf vertex in $T_\mathcal I$, we obtain the factor $\tau_0^2$.
\end{proof}

\begin{lem}
    \label{lem:number_similarity_classes}
    Recall the setting of Theorem~\ref{thm:State_evolution_approximation_polynomial},
    where $t = (\log N)/C_D$ with \[C_D = K_1 D \log(\max\{D,M\})\]
    Let $\mathcal I_{i,s,m,0}$ be the set of isomorphism classes with excess~$0$
    appearing in the expansion of $(x_i^s)^m$.
    For each integer $r \ge 1$, let $\mathcal Q_r$ denote the set of similarity classes
    \[
        \mathcal Q_r
        := \bigl\{\mathcal S(\mathcal I,\sigma)
            : \mathcal I \in \mathcal I_{i,s,m,0},\;
              \sigma \in P_{|V(\mathcal I)|},\;
              |V(\mathcal I)| - |\sigma| = r
        \bigr\}.
    \]
    Then
    \[
        |\mathcal Q_r|
        \le
        N^{4r/K_1}\bigr.
    \]
\end{lem}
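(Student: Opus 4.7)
The plan is to parametrize each similarity class in $\mathcal Q_r$ by a small amount of combinatorial data whose total count is $D^{O(rtD)}$, and then convert this to $N^{4r/K_1}$ using the hypothesis $tD\log(\max\{D,M\})\le \log N/K_1$. The key starting observation is that $r=|V(\mathcal I)|-|\sigma|=\sum_{B\in\sigma,\,|B|\ge 2}(|B|-1)$, so the number of vertices of $\mathcal I$ lying in non-singleton blocks is at most $2r$.

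First I would bound the size of $\sigma(\mathcal I)$. Let $U\subseteq V(\mathcal I)$ be the smallest upward-closed subset containing the root together with every non-singleton vertex. Since each such vertex has depth at most $t$, $|U|\le 2rt+1$. By Definition~\ref{defn:subtree_sigma}, $\sigma(\mathcal I)$ equals $U$ together with all siblings of the vertices of $U$; since each parent in $\mathcal I$ has at most $D$ children,
\[
s_{\max}:=|V(\sigma(\mathcal I))|\le 1+D|U|\le 2rtD+D+1.
\]

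Next I would enumerate the similarity classes. By Definition~\ref{defn:sim_classes}, each class is specified by: (a) the quotient subtree $\sigma(\mathcal I)$, viewed as a rooted tree of size $\le s_{\max}$, depth $\le t$, and arity $\le D$, together with the unordered pair of pre-quotient monomial degrees in $\{0,\dots,D\}$ at each vertex (which is what distinguishes different Wick-paired pre-quotients producing the same quotient); and (b) the specification of which $\le 2r$ vertices of $\sigma(\mathcal I)$ lie in non-singleton blocks, together with the partition into blocks of size $\ge 2$. A standard BFS arity-sequence encoding of the subtree, combined with the $\binom{D+2}{2}=O(D^2)$ choices of degree-pair per vertex, bounds (a) by $(D+1)^{s_{\max}}\cdot O(D^2)^{s_{\max}}=D^{O(s_{\max})}$. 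The data in (b) is bounded by $\binom{s_{\max}}{2r}(2r)^{2r}\le s_{\max}^{2r}(2r)^{2r}$. From $C_D=K_1 D\log(\max\{D,M\})$ and $t\le\log N/C_D$ one obtains $tD\log D\le \log N/K_1$, so
\[
s_{\max}\log D=O(rtD\log D)=O(r\log N/K_1),
\]
which makes the factor from (a) at most $N^{c_1 r/K_1}$ for an explicit constant $c_1$. The factor from (b) is $\exp(O(r\log\log N))=N^{o(r)}$ and is subdominant for large $N$. With $K_1>20$, these combine to $|\mathcal Q_r|\le N^{4r/K_1}$.

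The main obstacle is bookkeeping: the quantity $s_{\max}\log D$ must be kept within $4r\log N/K_1$, which is exactly tight against the threshold $C_D=K_1D\log(\max\{D,M\})$ — this is precisely why $C_D$ is chosen with a $D\log D$ scaling. Any slack in either the size estimate $|\sigma(\mathcal I)|\le 2rtD+O(D)$ or the per-vertex enumeration cost $O(\log D)$ must be absorbed by taking $K_1$ sufficiently large. The combinatorial enumerations themselves are routine; the work lies in verifying that all lower-order factors (the Bell-number partition count, the $\log\log N$ contributions, and the $O(D^2)$ per-vertex factor) fit inside the target exponent $4r/K_1$.
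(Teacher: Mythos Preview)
Your overall strategy matches the paper's: bound $|\sigma(\mathcal I)|$ by $O(rtD)$, then enumerate the possible subtrees to get $D^{O(rtD)}=N^{O(r/K_1)}$. Two points in your bookkeeping need correction, though neither breaks the final bound.

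First, your claim that the unordered degree-pair at each quotient vertex ``is what distinguishes different Wick-paired pre-quotients producing the same quotient'' is false. Two distinct Wick pairings of $T_{\sigma(\mathcal I)}$ can yield the same quotient shape with identical degree pairs at every vertex yet lie in different similarity classes, since Definition~\ref{defn:sim_classes} fixes the actual pairing pattern on the underlying unlabeled tree, not just the quotient. You must therefore also enumerate the pairing at each branch vertex; this costs at most $(2D-1)!!\le D^{O(D)}$ per branch vertex, and with $O(rt)$ branch vertices the total is still $D^{O(rtD)}$. The paper handles exactly this in its Step~2, budgeting ``$O(D^D)$ ways to choose the underlying unlabeled tree structure of that vertex and its siblings'' per branch vertex.

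Second, your estimate that the factor from (b) is $\exp(O(r\log\log N))=N^{o(r)}$ is too optimistic. Since $r$ may be as large as $|V(\mathcal I)|\le mD^t\le N^{1/K_1}\log\log N$, one has $\log(2r)=O((\log N)/K_1)$ rather than $O(\log\log N)$, so $s_{\max}^{2r}(2r)^{2r}=N^{O(r/K_1)}$. This is still absorbed into the target $N^{4r/K_1}$ for $K_1$ large enough, which is how the paper handles the analogous factor as well.
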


\begin{proof}
    A similarity class $\mathcal S(\mathcal I,\sigma)$ is uniquely determined by
    the subtree $\sigma(\mathcal I)$ constructed in Definition~\ref{defn:subtree_sigma}.
    Thus, it suffices to bound the total number of distinct subtrees $\sigma(\mathcal I)$
    arising from all possible pairs $(\mathcal I,\sigma)$ with $|V(\mathcal I)| - |\sigma| = r$.

    \smallskip
    Step 1: Choosing the non-singleton vertices.
    A partition $\sigma$ with $|V(\mathcal I)| - |\sigma| = r$
    has exactly $r$ ``collisions,''
    meaning $r$ vertices lie in blocks of size at least~$2$.
    These $r$ vertices form precisely the set of initial vertices
    added in step~(i) of Definition~\ref{defn:subtree_sigma}.

    For each such vertex, there is a unique simple path of doubled edges from it to the root, which is added in step (ii) of Definition~\ref{defn:subtree_sigma}.
    Any class $\mathcal I$ has at most $C_0 D^t(\log \log N) \le N^{1/K_1}$
    non-root vertices. The $\log \log N$ factor comes from the choice of the degree of the root vertex $m\leq \log \log N$. For each of the $r$ collision vertices, we attach a subpath of a simple path to the root. The length of this subpath is at most $t$, and there are at most $N^{1/K_1}$ possible locations to glue the subpath.
    Hence, the total number of choices for the $r$ initial vertices and the paths connecting them to the root is at most
    \(
        (t N^{1/K_1})^{r}
    \)

    \smallskip
    Step 2: Local degrees and Wick pairings.
    The remaining structure of the subtree $\sigma(\mathcal I)$ is related to the siblings of the vertices added in Step (1), (see step (ii) of Definition~\ref{defn:subtree_sigma}).
    This structure is determined by the local monomial degrees and Wick pairing information in the
    underlying unlabeled tree. In Step (1), we added at most $rt$ vertices, and for each vertex we have $O(D^D)$ ways to choose the underlying unlabeled tree structure of that vertex and its siblings.
    These contribute at most $(2D)^{O(Drt)}(\log \log N)^{\log \log N} = N^{o(r)}$ possibilities,
    which are no larger than $(N^{1/K_1})^{r}$.

    \smallskip
    \noindent
    Combining the estimates above, the total number of distinct subtrees
    $\sigma(\mathcal I)$ with defect $r$ is at most
    \(
        N^{4r/K_1}.
    \)
    Since each such subtree corresponds to exactly one similarity class,
    the same bound holds for $|\mathcal Q_r|$.
    This proves the lemma.
\end{proof}

\begin{prop}\label{prop:sum_delta_0_accurate}
Recall the setting of Theorem~\ref{thm:State_evolution_approximation_polynomial}, 
where $ t = (\log N)/C_D $ with $C_D = K_1 D \log(\max\{D, M\})$, and
$c_{s,d}, \tau_s < M$ for all iterations. Then
\[
\sum_{\mathcal I \in \mathcal I_{i,s,m,0}} 
\mathbb E\,\mathrm{Val}(\mathcal I)
= \mathbb E\left[(y_i^s)^{m}\right]+ O(N^{-1 + 10/K_1}).
\]
Where $y_i^s\sim \mathcal N(0, \tau_s^2)$.
\end{prop}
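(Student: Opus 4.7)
The plan is to start from Lemma~\ref{lem:delta_0_val_2}, which identifies the idealized sum $\sum_{\mathcal I}\overline{\mathrm{Val}}(\mathcal I)$ with the state evolution prediction $\mathbb E[(y_i^s)^m]$ (for even $m$; for odd $m$ both sides vanish since $\mathcal I_{i,s,m,0}=\emptyset$). So it suffices to control the discrepancy $\sum_{\mathcal I}\bigl(\overline{\mathrm{Val}}(\mathcal I) - \mathbb E\,\mathrm{Val}(\mathcal I)\bigr)$, which by Lemma~\ref{lem:non_injective} is exactly the sum over $\mathcal I\in \mathcal I_{i,s,m,0}$ and \emph{non-injective} labelings $\phi: V(\mathcal I) \to [N]$ of $N^{-|V(\mathcal I)|} \prod_{v} c_v \prod_{\ell} \tau_0$. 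The task therefore reduces to bounding this non-injective contribution by $N^{-1+10/K_1}$.

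First I would group non-injective labelings of a fixed $\mathcal I$ by the partition $\sigma \in P_{|V(\mathcal I)|}$ of $V(\mathcal I)$ they induce. A labeling inducing a partition with $|\sigma|$ blocks contributes a count of $(N)_{|\sigma|} \le N^{|\sigma|}$, so together with the $N^{-|V(\mathcal I)|}$ prefactor this yields a saving of $N^{-r}$, where $r := |V(\mathcal I)| - |\sigma| \ge 1$ is the defect of the partition. Next I would collect pairs $(\mathcal I, \sigma)$ into similarity classes $\mathcal S(\mathcal I, \sigma)$ (Definition~\ref{defn:sim_classes}) and apply Proposition~\ref{prop:similarity_sum}: summing $\prod_{v\in V(T_\mathcal J)} c_v \cdot \prod_{\ell\in L(T_\mathcal J)} \tau_0$ over a similarity class collapses to $\prod_{v \in V(T_{\sigma(\mathcal I)})} c_v \cdot \prod_{v \in L(\sigma(\mathcal I))} \tau_{s_v}^2$, a product over the much smaller subtree $\sigma(\mathcal I)$, whose vertex count is $O(rtD)$ by the construction in Definition~\ref{defn:subtree_sigma}.

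For the final estimate, under Assumption~\ref{assump:bound} this coefficient product is bounded by $M^{O(rtD)}$, and the choice $t \le \log N / C_D$ with $C_D = K_1 D \log(\max\{D,M\})$ gives $M^{rtD} \le N^{O(r/K_1)}$ since $\log M / \log(\max\{D,M\}) \le 1$. Combining with Lemma~\ref{lem:number_similarity_classes}, which gives $|\mathcal Q_r| \le N^{4r/K_1}$ similarity classes at defect $r$, each defect-$r$ contribution is bounded by
\[
N^{-r} \cdot N^{4r/K_1} \cdot N^{O(r/K_1)} \le N^{-r(1-10/K_1)}.
\]
Since $K_1 > 20$, the exponent $-(1 - 10/K_1)$ is strictly less than $-1/2$, so summing over $r \ge 1$ produces a convergent geometric series dominated by its $r=1$ term, yielding the claimed $O(N^{-1+10/K_1})$. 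The main obstacle is less any single estimate than the organization itself: one must verify that grouping by similarity class is the right way to extract the $N^{-r}$ savings before the combinatorial explosion $|\mathcal Q_r|$ and the coefficient product $M^{O(rtD)}$ can accumulate. Once Proposition~\ref{prop:similarity_sum} and Lemma~\ref{lem:number_similarity_classes} are in place, the remaining work is mechanical — the delicate point is the algebraic balance encoded in the definition of $C_D$, which is precisely what keeps the coefficient factor sub-polynomial in $N$ and lets the $N^{-r}$ saving dominate.
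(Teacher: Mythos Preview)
Your overall strategy matches the paper's: reduce via Lemmas~\ref{lem:delta_0_val_2} and~\ref{lem:non_injective} to bounding the non-injective part, organize by pairs $(\mathcal I,\sigma)$, collapse over similarity classes using Proposition~\ref{prop:similarity_sum}, count classes via Lemma~\ref{lem:number_similarity_classes}, and let the definition of $C_D$ absorb the $M^{O(rtD)}$ factors. There is, however, one genuine technical gap in the way you handle the prefactor.

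You decompose non-injective labelings by the partition $\sigma$ they induce, giving a count $(N)_{|\sigma|}$, and bound $(N)_{|\sigma|}N^{-|V(\mathcal I)|}\le N^{-r}$. The problem is that within a fixed similarity class the quantity $(N)_{|\sigma|}N^{-|V(\mathcal I)|}$ is \emph{not} constant: both $|V(\mathcal I)|$ and $|\sigma|$ vary across $(\mathcal J,\tau)\in\mathcal S(\mathcal I,\sigma)$, even though their difference $r$ is fixed. Proposition~\ref{prop:similarity_sum} is an exact identity for the signed sum $\sum_{(\mathcal J,\tau)}\prod c_v\prod\tau_0$; to pull out your varying prefactor you would first have to pass to absolute values, after which the inner sum becomes $\sum_{(\mathcal J,\tau)}\prod|c_v|\prod\tau_0$. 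By the same factorization this collapses to a product involving the state-evolution variances $\tilde\tau_{s_v}^2$ for the AMP with coefficients $|c_{s,d}|$ in place of $c_{s,d}$, and those are \emph{not} controlled by Assumption~\ref{assump:bound} --- they can grow doubly exponentially in $s$. So the bound ``$\tau_{s_v}^2\le M$'' you rely on in the last paragraph is not available.

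The paper avoids this by invoking Proposition~\ref{prop:factorial_formula} to rewrite $N^k-(N)_k$ as the signed sum $\sum_{\sigma\ne\hat\sigma}(-1)^{k-|\sigma|+1}\bigl(\prod_{B\in\sigma}(|B|-1)!\bigr)N^{|\sigma|}$. This weight \emph{is} constant on each similarity class (it depends only on $r$ and the non-singleton block structure, both fixed by the class), so Proposition~\ref{prop:similarity_sum} applies directly with the signed $c_v$'s and produces the genuine $\tau_{s_v}^2\le M$. The price is an extra factor $\prod_B(|B|-1)!\le |V(\mathcal I)|^{r}\le N^{r/K_1}$, which is simply absorbed into the exponent $10/K_1$. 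With this one modification your argument goes through exactly as you outlined.
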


\begin{proof}
 Note that when $m$ is odd, both sides of the equation hold trivially since there are no isomorphism classes with $\Delta(\mathcal I)=0$ and $\mathbb E[(y_i^s)^m]=0$. So we consider $2m$, which is even. Let $\mathcal I \in \mathcal I_{i,s,2m,0}$ be an isomorphism class with 
    $k = |V(\mathcal I)|$.  
    By Proposition~\ref{prop:factorial_formula},
    \[
    N^k - (N)_k
    = \sum_{\sigma \in P_k \setminus \{\hat\sigma\}}
    (-1)^{k - |\sigma| + 1}
    \left(\prod_{B \in \sigma} (|B|-1)!\right) N^{|\sigma|},
    \]
    where $\hat\sigma$ denotes the trivial partition ($k$ blocks of size $1$).  
    Here $(N)_k$ counts injective labelings of $\mathcal I$, 
    and thus $N^k - (N)_k$ counts non-injective labelings, 
    decomposed by partitions $\sigma$ in which each block consists of vertices sharing a label.

       Recall that by Lemma \ref{lem:delta_0_val_2} and Lemma \ref{lem:non_injective} we must bound the RHS of \eqref{eq:difference_injective} to prove the Proposition. We may rewrite the RHS of \eqref{eq:difference_injective} in the following way,
    \begin{align}
    \label{eq:sum_partitions}
    \sum_{\mathcal I\in \mathcal I_{i, s, 2m,0}}\sum_{\sigma\in P_{|V(\mathcal I)|}\backslash \{\hat \sigma\}}(-1)^{|V(\mathcal I)|-|\sigma|+1}\left(\prod_{B\in \sigma}(|B|-1)!\right)\cdot N^{|\sigma|}\cdot  N^{-|V(\mathcal I)|} \prod_{v \in V(T_\mathcal I)}  c_v \cdot \prod_{\ell \in L(T_\mathcal I)}\tau_0
    \end{align}
    Consider the sum \eqref{eq:sum_partitions} restricted to a single similarity class in the sense of Definition \ref{defn:sim_classes}. That is considered the sum,
    \[
    \sum_{(\mathcal J, \tau)\in (\mathcal I, \sigma) } (-1)^{|V(\mathcal J)|-|\tau|+1}\left(\prod_{B\in \tau}(|B|-1)!\right)\cdot N^{|\tau|-|V(\mathcal J)|} \prod_{v \in V(T_\mathcal J)}  c_v \cdot \prod_{\ell \in L(T_\mathcal J)}\tau_0
    \]
    Within a class, the factors
    \[
    (-1)^{|V(\mathcal J)| - |\tau| + 1},\qquad
    \prod_{B\in\tau} (|B|-1)!,\qquad
    N^{|\tau| - |V(\mathcal J)|}
    \]
are constant, depending only on the partition $\tau$.  This is because, the three factors only depends on the subtree $\tau(\mathcal J)$ which is fixed over the similarity class.
    By Proposition \ref{prop:similarity_sum} the sum over a single similarity class factors to:
    \begin{align}
    \label{eq:sim_class_sum}
    (-1)^{|V(\mathcal I)|-|\sigma|+1} \left(\prod_{B\in \sigma}(|B|-1)!\right) N^{|\sigma|-|V(\mathcal I)|}
  \prod_{v \in V(T_{\sigma(\mathcal{I})})} c_v
  \prod_{v \in L(\sigma(\mathcal{I}))} \tau_{s_v}^2
    \end{align}
    The number of generations in the tree is at most $t\leq \log N/C_D$, $C_D=K_1\cdot D\log(\max\{D, M\})$. The quantity $|V(\mathcal I)| - |\sigma|$ counts how many vertex identifications occur. In the construction of $\sigma(\mathcal I)$ 
    (Definition~\ref{defn:subtree_sigma}):
    
    \begin{itemize}
    \item Step $(i)$ adds at most $|V(\mathcal I)| - |\sigma|$ vertices.
    \item Step $(ii)$ adds at most $tD$ new vertices for each added vertex in step $(i)$.
    \end{itemize}
    Hence
    \[
    |V(T_{\sigma(\mathcal I)})| \le tD (|V(\mathcal I)| - |\sigma|).
    \]
    Since $c_v, \tau_{s_v}<M$ we may bound the factor,
    \[
    \prod_{v \in V(T_{\sigma(\mathcal I)})}  c_v \cdot \prod_{\ell \in L(T_{\sigma(\mathcal I)})}\tau_0\leq M^{4tD(|V(\mathcal I)|-|\sigma|)}\leq N^{4(|V(\mathcal I)|-|\sigma|)/K_1}
    \]
    Now we bound the factor,
    \[
    \left(\prod_{B\in \sigma}(|B|-1)!\right)\leq |V(\mathcal I)|^{|V(\mathcal I)|-|\sigma|}\leq N^{(|V(\mathcal I)|-|\sigma|)/K_1}
    \]
    Similarly, we bound the factor, \[\prod_{v \in L(\sigma(\mathcal{I}))} \tau_{s_v}^2\leq M^{tD(|V(\mathcal I)|-|\sigma|)}\leq N^{(|V(\mathcal I)|-|\sigma|)/K_1}\]
    
    Combining these bounds in \eqref{eq:sim_class_sum}, 
    each similarity class with parameter pair $(\mathcal I,\sigma)$ contributes at most
    \[
      \Bigl|\mathrm{contrib}(\mathcal I,\sigma)\Bigr|
      \;\lesssim\;
      N^{(|\sigma| - |V(\mathcal I)|)(1 - 6/K_1)}.
    \]
    Writing $r = |V(\mathcal I)| - |\sigma| \ge 1$, this is
    \[
      \Bigl|\mathrm{contrib}(\mathcal I,\sigma)\Bigr|
      \;\lesssim\;
      N^{-r(1 - 6/K_1)}.
    \]

    Now group the similarity classes according to this parameter $r$.  
    For each $r \ge 1$, let $\mathcal Q_r$ denote the set of similarity classes 
    $\mathcal S(\mathcal I,\sigma)$ with $|V(\mathcal I)| - |\sigma| = r$.  
    By Lemma~\ref{lem:number_similarity_classes}, we have
    \[
      |\mathcal Q_r| \le N^{4r/K_1}.
    \]
    Therefore, the total contribution of all similarity classes with parameter $r$ is bounded by
    \[
      \sum_{\mathcal S(\mathcal I,\sigma)\in \mathcal Q_r}
      \Bigl|\mathrm{contrib}(\mathcal I,\sigma)\Bigr|
      \lesssim
      |\mathcal Q_r|\cdot N^{-r(1 - 6/K_1)}
      \le
      N^{4r/K_1} N^{-r(1 - 6/K_1)}
      =
      N^{-r(1 - 10/K_1)}.
    \]

    Summing over $r\ge 1$ gives
    \[
      \sum_{r\ge 1}
      \sum_{\mathcal S(\mathcal I,\sigma)\in \mathcal Q_r}
      \Bigl|\mathrm{contrib}(\mathcal I,\sigma)\Bigr|
      \lesssim
       N^{-1 + 10/K_1},
    \]
    where we used that the worst case occurs at $r=1$ and absorbed the finite sum
    over $r$ into the implicit constant.
    Therefore, the right-hand side of \eqref{eq:difference_injective}, i.e.\ the contribution
    from all non-injective labelings, is
    \( O\bigl(N^{-1 + 10/K_1}\bigr).\)
    By Lemma~\ref{lem:delta_0_val_2} and Lemma~\ref{lem:non_injective}, this shows that
    the total contribution of non-injective labelings to
    \[
      \sum_{\mathcal I\in \mathcal I_{i,s,2m,0}}
      \mathbb E\,\mathrm{Val}(\mathcal I)
    \]
    is $O(N^{-1 + 10/K_1})$. On the other hand, by Lemma~\ref{lem:delta_0_val_2} the contribution of all labelings from the classes
    $\mathcal I_{i,s,2m,0}$ is exactly \(\mathbb E\bigl[(y_i^s)^{2m}\bigr]\) where $y_i^s\sim\mathcal N(0,\tau_s^2)$.
    Combining these, we obtain
    \[
      \sum_{\mathcal I\in \mathcal I_{i,s,2m,0}}
      \mathbb E\,\mathrm{Val}(\mathcal I)
      \;=\;
      \mathbb E[(y_i^s)^{2m}]
       + O(N^{-1 + 10/K_1})\bigr).
    \]
    Where on the LHS we sum over injective labelings of the isomorphism classes.
\end{proof}

We use the notation \( T(x_i^s) \) to denote the element of the moment algebra \( \mathcal{A} \) corresponding to the iterate \( x_i^s \). Explicitly,  
\[
T(x_i^s) \;=\; \sum_{T \in {T}_{i,s}}
    T \cdot \prod_{v \in V(T)} c_v \cdot \prod_{\ell \in L(T)} \tau_0,
\]
where \( {T}_{i,s} \) is the collection of unlabeled trees associated with the iterate \( x_i^s \). Furthermore, we write $T(x_i^{s_1})\star T(x_i^{s_2})$ to denote the expansion of $x_i^{s_1}\cdot x_i^{s_2}$. That is,
\[
T(x_i^s) \;=\; \sum_{T_1 \in {T}_{i,s_1}}\sum_{T_2 \in {T}_{i,s_2}}
    T_1\star T_2 \cdot \prod_{v \in V(T_1), V(T_2)} c_v \cdot \prod_{\ell \in L(T_1), L(T_2)} \tau_0
\]

The analog of Proposition \ref{prop:sum_delta_0_accurate} also holds for the expansion of $x_i^{s_1}\cdot x_i^{s_2}$. That is,

\begin{prop}
\label{prop:delta_zero_covariance}
    We have the following
    \[
    \sum_{\mathcal I\in \mathcal I_{i,s_1,s_2,0}}\mathbb E\mathrm{Val}(\mathcal I)=\mathrm{Wick}(T(x_i^{s_1})\star T(x_i^{s_2}))+O(N^{-1+10/K_1}))
    \]
    Where $\mathcal I_{i,s_1,s_2,0}$ denotes isomorphism classes of trees with excess $\Delta=0$ produced from unlabeled trees in the expansion $T(x_i^{s_1})\star T(x_i^{s_2})$. 
\end{prop}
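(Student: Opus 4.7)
The plan is to rerun the proof of Proposition~\ref{prop:sum_delta_0_accurate} verbatim, with the algebra element $T(x_i^{s_1})\star T(x_i^{s_2})$ in place of $T((x_i^s)^{m})$. Gluing the two tree-expansions at the common root produces a linear combination of rooted monomial trees of depth $\max(s_1,s_2)\le t$, so every size bound used in the single-iterate proof carries over with the same constants: the total vertex count is at most $2\,C_0 D^t\log\log N\le N^{1/K_1}$, and coefficient/$\tau$-products are controlled by $M^{O(tDr)}\le N^{O(r/K_1)}$ on the bad subtree.

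First I would establish the analog of Lemma~\ref{lem:delta_0_val_2}: defining $\overline{\mathrm{Val}}(\mathcal I)$ as in Definition~\ref{defn:val_2_value} for an isomorphism class $\mathcal I\in\mathcal I_{i,s_1,s_2,0}$,
\[
\sum_{\mathcal I\in\mathcal I_{i,s_1,s_2,0}}\overline{\mathrm{Val}}(\mathcal I)=\mathrm{Wick}\bigl(T(x_i^{s_1})\star T(x_i^{s_2})\bigr).
\]
This follows by bilinearly expanding over pairs of unlabeled trees and invoking Proposition~\ref{prop:se_functional_tree}, which realizes the Wick functional as the multivariate-Gaussian moment built recursively from the subtree pairings. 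The analog of Lemma~\ref{lem:non_injective} is identical: in a $\Delta=0$ class each of the $|V(\mathcal I)|$ edge-pairs contributes $\mathbb E[A_{ij}^2]=1/N$ and paired leaves contribute $\tau_0^2$, neither of which depends on whether the tree came from one expansion or from a $\star$-product of two.

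Next I would apply Proposition~\ref{prop:factorial_formula} to write $N^k-(N)_k$ as a signed sum over nontrivial partitions $\sigma$, group terms by the similarity class of Definition~\ref{defn:sim_classes}, and invoke Proposition~\ref{prop:similarity_sum} to factor each such sum into $\prod_{v\in V(T_{\sigma(\mathcal I)})}c_v\cdot\prod_{v\in L(\sigma(\mathcal I))}\tau_{s_v}^{2}$. Proposition~\ref{prop:similarity_sum} applies unchanged because the boundary-leaf extensions are independent Wick-paired sub-isomorphism classes whose total $\overline{\mathrm{Val}}$-contribution is still $\tau_{s_v}^{2}$, irrespective of how the top of the tree was formed. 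The similarity-class count $|\mathcal Q_r|\le N^{4r/K_1}$ of Lemma~\ref{lem:number_similarity_classes} also carries over: its proof only uses the depth $t$, the degree $D$, and a root-degree bound of $O(\log\log N)$, and here the combined root has at most $2\log\log N$ children, which is absorbed into the $N^{o(r)}$ factor. Assembling the per-class bound $N^{-r(1-6/K_1)}$ with the class count $N^{4r/K_1}$ and summing over $r\ge 1$ yields the claimed $O(N^{-1+10/K_1})$ error.

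The only nontrivial bookkeeping issue, and the one I would check carefully, is the case $s_1\ne s_2$. Then the children of the merged root split into two groups living at levels $s_1-1$ and $s_2-1$, which are distinct row-levels, so row-wise Wick pairings cannot cross between the two groups near the root. This does not affect the argument: it simply means some edges at the root have forced partners inside their own subtree, which is already accounted for in $\mathrm{Wick}(T(x_i^{s_1})\star T(x_i^{s_2}))$; the similarity-class factorization of Proposition~\ref{prop:similarity_sum} still returns $\tau_{s_v}^2$ at each boundary leaf, provided $s_v$ is interpreted as the generation of $v$ within its own subtree. Once this convention is fixed, every estimate is identical in form and constant to those in Proposition~\ref{prop:sum_delta_0_accurate}, giving the proposition.
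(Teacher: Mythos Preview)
Your proposal is correct and follows essentially the same approach as the paper, which simply states that the proof is identical to that of Proposition~\ref{prop:sum_delta_0_accurate} after recalling Lemma~\ref{lem:delta_0_perfect_pairing}. You are more thorough than the paper in explicitly verifying that each ingredient (Lemmas~\ref{lem:delta_0_val_2} and~\ref{lem:non_injective}, Propositions~\ref{prop:factorial_formula} and~\ref{prop:similarity_sum}, and Lemma~\ref{lem:number_similarity_classes}) carries over, and in particular your discussion of the $s_1\neq s_2$ case and the generation convention at boundary leaves is a useful sanity check that the paper omits.
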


\begin{proof}
    The proof is essentially the same as Proposition \ref{prop:sum_delta_0_accurate}. We recall that by Lemma \ref{lem:delta_0_perfect_pairing} isomorphism classes of $\mathcal I_{i,s_1,s_2,0}$ are perfectly Wick paired. Then we can repeat the arguments of Proposition \ref{prop:sum_delta_0_accurate} to obtain the result.
\end{proof}

\subsection{Bounding the contribution from \texorpdfstring{$\Delta>0$}{Delta>0} labeled trees}

Proposition \ref{prop:sum_delta_0_accurate} shows that the contribution from the $\Delta=0$ trees asymptotically matches the state evolution formulas. It remains to show that the contribution from the $\Delta>0$ trees is small. Recall that $\mathcal T_{i, t, m}$ denotes the set of labeled monomial trees of depth $t$ rooted at vertex $i$, such that the root has $m$ children.

\begin{defn}
    Given a labeled tree in $\mathcal T_{i, t, m}$ or a tree isomorphism class of labeled trees in $\mathcal T_{i, t, m}$. We define a vertex in that tree to be \textbf{bad} if any of the following holds:
\begin{itemize}
    \item It has label \( i \) with multiplicity \( N_i \geq 3 \).
    \item It is incident to a \( \mathbf{T_3} \) edge or shares a label with a vertex incident to a $\bf{T_3}$ edge (Definition \ref{def:edge_types}).
    \item It shares the same label as the root (but is not the root).
    \item It shares a label with a vertex that satisfies any of the above.
\end{itemize}
Let all other vertices be \textbf{good}. 
\end{defn}

\begin{remark}
    The good vertices are the vertices whose label $i$ occurs twice $(N_i=2)$, and the edges incident to those two vertices with label $i$ are all $\bf{T_1}$ and $\bf{T_2}$, meaning they are ``Wick Paired." 
\end{remark}

\begin{defn} \label{defn:bad_subtree}
Let $\mathcal T$ be a labeled tree (or labeled tree isomorphism class).  
For each bad vertex $v \in \mathcal T$, define its \textbf{branch} $b(v)$ to be the unique simple path in $\mathcal T$ from $v$ to the root, including both $v$ and the root.  

The \textbf{bad subtree} $\mathcal B$ of $\mathcal T$ is the subgraph consisting of:
\begin{itemize}
    \item[$(i)$] all vertices and edges lying in each branch $b(v)$, for every bad vertex $v$.
    \item[$(ii)$] all vertices with the same label as any vertex on some branch $b(v)$ from step $(i)$. We call the vertices from steps $(i)$ and $(ii)$ the \textbf{branch} vertices of the bad subtree. In addition, add the edge between two branch vertices if they are incident in $\mathcal T$
    \item[$(iii)$] every additional edge of $\mathcal T$ that is incident to a branch vertex from step $(i)$ or $(ii)$, together with both of its endpoints.  The vertices added in this step are called \textbf{boundary vertices}, which together form the \textbf{boundary} $\partial \mathcal B$ of the bad subtree.
\end{itemize}
We remark that the definition of the bad subtree is the fixed (up to permutation of the labels) for a labeled tree isomorphism class since the equivalence classes of vertex labels are determined by the isomorphism class. Note that in this definition $\partial \mathcal B\subset \mathcal B$. 
\end{defn}

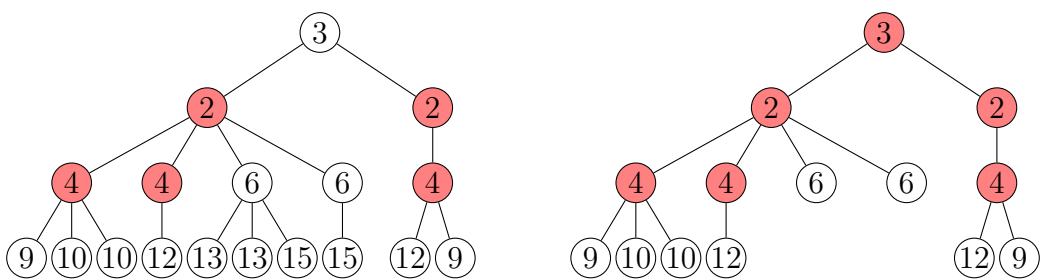
\begin{figure}[h]
\centering
\begin{tikzpicture}[level distance=1cm,
  level 1/.style={sibling distance=3cm},
  level 2/.style={sibling distance=1.2cm},
  level 3/.style={sibling distance=0.6cm}
  ]

\node[circle,draw,fill=white, minimum size=15pt, inner sep=0pt] (r1) at (0,0) {3}
  child {node[circle,draw,fill=red!50, minimum size=15pt, inner sep=0pt] (a1) {2}
    child {node[circle,draw,fill=red!50, minimum size=15pt, inner sep=0pt] (a11) {4}
        child {node[circle,draw,fill=white, minimum size=15pt, inner sep=0pt] (a111) {9}}
        child {node[circle,draw,fill=white, minimum size=15pt, inner sep=0pt] (a112) {10}}
        child {node[circle,draw,fill=white, minimum size=15pt, inner sep=0pt] (a113) {10}}
    }
    child {node[circle,draw,fill=red!50, minimum size=15pt, inner sep=0pt] (a12) {4}
        child {node[circle,draw,fill=white, minimum size=15pt, inner sep=0pt] (a121) {12}}}
    child {node[circle,draw,fill=white, minimum size=15pt, inner sep=0pt] (a13) {6}
        child {node[circle,draw,fill=white, minimum size=15pt, inner sep=0pt] (a131) {13}}
        child {node[circle,draw,fill=white, minimum size=15pt, inner sep=0pt] (a132) {13}}
        child {node[circle,draw,fill=white, minimum size=15pt, inner sep=0pt] (a132) {15}}}
    child {node[circle,draw,fill=white, minimum size=15pt, inner sep=0pt] (a14) {6}
        child {node[circle,draw,fill=white, minimum size=15pt, inner sep=0pt] (a141) {15}}}
  }
  child {node[circle,draw,fill=red!50, minimum size=15pt, inner sep=0pt] (a2) {2}
    child {node[circle,draw,fill=red!50, minimum size=15pt, inner sep=0pt] (a21) {4}
         child {node[circle,draw,fill=white, minimum size=15pt, inner sep=0pt] (a211) {12}}
        child {node[circle,draw,fill=white, minimum size=15pt, inner sep=0pt] (a212) {9}}
    }
  };

\node[circle,draw,fill=red!50, minimum size=15pt, inner sep=0pt] (r1) at (7.5,0) {3}
  child {node[circle,draw,fill=red!50, minimum size=15pt, inner sep=0pt] (a1) {2}
    child {node[circle,draw,fill=red!50, minimum size=15pt, inner sep=0pt] (a11) {4}
        child {node[circle,draw,fill=white, minimum size=15pt, inner sep=0pt] (a111) {9}}
        child {node[circle,draw,fill=white, minimum size=15pt, inner sep=0pt] (a112) {10}}
        child {node[circle,draw,fill=white, minimum size=15pt, inner sep=0pt] (a113) {10}}
    }
    child {node[circle,draw,fill=red!50, minimum size=15pt, inner sep=0pt] (a12) {4}
        child {node[circle,draw,fill=white, minimum size=15pt, inner sep=0pt] (a121) {12}}}
    child {node[circle,draw,fill=white, minimum size=15pt, inner sep=0pt] (a13) {6}}
    child {node[circle,draw,fill=white, minimum size=15pt, inner sep=0pt] (a14) {6}}
  }
  child {node[circle,draw,fill=red!50, minimum size=15pt, inner sep=0pt] (a2) {2}
    child {node[circle,draw,fill=red!50, minimum size=15pt, inner sep=0pt] (a21) {4}
         child {node[circle,draw,fill=white, minimum size=15pt, inner sep=0pt] (a211) {12}}
        child {node[circle,draw,fill=white, minimum size=15pt, inner sep=0pt] (a212) {9}}
    }
  };
  
\end{tikzpicture}
\caption{(Left) A labeled tree $\mathcal T$ with bad vertices colored in red ($N_4=3$ the red label of $i=2$ comes due to incidence to a $\bf{T_3}$ edge). (Right) The bad subtree 
$\mathcal B$ of $\mathcal T$. The red vertices denote the branch vertices of $\mathcal B$ and the white vertices are the boundary vertices of $\mathcal B$}
\label{fig:bad_subtree}
\end{figure}

\begin{prop}
\label{prop:bad_subtree_properties}
    Let $\mathcal T$ be a labeled tree and let $\mathcal B$ be its bad subtree, as in Definition~\ref{defn:bad_subtree}. Then:
    \begin{itemize}
        \item[$(1)$] The branch vertices of $\mathcal B$ form a connected subtree containing the root. In particular, the unique simple path from branch vertex to the root must consist of branch vertices.
        \item[$(2)$] The boundary vertices of $\mathcal B$ are all good and are paired
        \item[$(3)$] The subtrees rooted at each boundary vertex $v$ (glued with the pair vertex $w$ from step $(2)$) contain only good vertices not in $\mathcal B$ (except for $v,w$) and are row-wise ``Wick Paired" in the sense of Definition \ref{defn:tree_wick_pairings}.
    \end{itemize}
\end{prop}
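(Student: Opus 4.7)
The plan is to prove assertions (1), (2), (3) in sequence, using induction on tree depth together with the edge classification of Definition~\ref{def:edge_types} and the non-backtracking condition. For (1), the step-(i) vertices are by construction the union of the branches $b(v)$ for bad $v$, so each such path already lies in $\mathcal B$ and reaches the root. For a step-(ii) vertex $w$ sharing a label with a step-(i) vertex $u^*$, I would induct upward along the unique path from $w$ to the root to show that every ancestor is a branch vertex. Since $w$ must be good (otherwise $b(w) \subset $ step (i) and $w$ would itself be a branch vertex), the edge $(p(w), w)$ is $\bf{T_1}$ or $\bf{T_2}$ and hence Wick-paired with another edge carrying the same unordered label pair $\{\ell(p(w)), \ell(w)\}$. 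Non-backtracking forbids the partner from being incident to $w$ via a child, so the partner is incident to $u^*$. In the good-pair case the partner is $(p(u^*), u^*)$, forcing $\ell(p(w)) = \ell(p(u^*))$ and placing $p(w)$ into step (ii). In the bad-pair case the partner is a child edge $(u^*, c(u^*))$; here I would track the Wick partner of $(p(u^*), u^*)$ and exploit the standard ordering to force either $N_i \geq 3$ for some label $i$ or the appearance of a $\bf{T_3}$ edge, making a nearby vertex bad, and then by transitive label-sharing also $w$ bad, contradicting $w \in $ step (ii) being good.

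For (2), a boundary vertex $v$ is by construction non-branch and adjacent in $\mathcal T$ to a branch vertex. If $v$ were bad, $b(v)$ would be added in step (i) and $v$ would be a branch vertex; hence $v$ is good, $N_{\ell(v)} = 2$, and $(p(v), v)$ is Wick-paired with an edge $(p(v'), v')$ satisfying $\ell(v') = \ell(v)$. If $v'$ were a branch vertex, then $v$ would be added in step (ii) by label-sharing, contradicting $v$ being a boundary vertex; so $v'$ is non-branch. The good-pair argument from (1) then gives $\ell(p(v')) = \ell(p(v))$, and since $p(v)$ is a branch vertex so is $p(v')$. Hence $v'$ is adjacent to a branch vertex and is itself a boundary vertex, and $v, v'$ are Wick partners.

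For (3), if $u$ is a strict descendant of a boundary vertex $v$ and $u$ were bad, then $v \in b(u) \subset $ step (i) would force $v$ to be a branch vertex, a contradiction; similarly, if $u$ were in $\mathcal B$, property (1) would put $v$ onto the path-to-root of $u$ in $\mathcal B$, again a contradiction. Hence the subtree below $v$ consists entirely of good vertices outside $\mathcal B$, and the same holds for $w$. For the Wick-pairing assertion on $T_v \star T_w$ (glued at the identified root $v = w$), I would induct downward from the pair $(v, w)$: given a pair $(u, u')$ of already-matched descendants with $u \in T_v$, $u' \in T_w$ and $\ell(u) = \ell(u')$, the good-pair argument yields $\ell(p(u)) = \ell(p(u'))$, and since all relevant vertices are good the children of $u$ and $u'$ at the next level Wick-pair across $T_v$ and $T_w$. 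This produces the family $\{\pi_\ell\}$ required by Definition~\ref{defn:tree_wick_pairings}.

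The main obstacle is the bad-pair case in the induction for (1). In a general $\Delta > 0$ labeled tree the Wick partner of a $\bf{T_1}/\bf{T_2}$ edge can cross generations in a way that does not immediately propagate the branch-vertex structure up the tree, and ruling this out requires a careful case analysis on the four edges incident to the labels of $p(w), w, u^*$ and $p(u^*)$, exploiting the standard ordering and the first-appearance rules of $\bf{T_1}, \bf{T_2}, \bf{T_3}$ to force a $\bf{T_3}$ edge or a high-multiplicity label, which then makes a nearby vertex bad and restores the desired chain of inclusions into $\mathcal B$.
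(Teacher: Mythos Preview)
Your arguments for (2) and (3) follow the paper's closely and are essentially correct, modulo one ordering issue: the claim $\ell(p(v'))=\ell(p(v))$ in (2) should be deferred until after the first half of (3) is established, since it is the fact that the subtree below $v'$ contains no branch vertices that forces the branch neighbour of $v'$ to be its parent rather than a child. The real problem is the bad-pair case in (1), and your proposed resolution does not work.

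The obstacle is genuine but is not resolved by forcing a $\mathbf{T_3}$ edge or $N_i\ge 3$. In a tree with $\Delta(\mathcal T)>0$, a $\mathbf{T_1}$--$\mathbf{T_2}$ pair can perfectly well be a bad pair without producing any such defect near $w$ or $u^*$: the argument that rules out bad pairs (Proposition~\ref{prop:bad_pair_bound_delta0}) is specific to $\Delta=0$. Concretely, in your bad-pair case the partner of $(p(w),w)$ is $(u^*,c)$ for some child $c$ of $u^*$, so $\ell(p(w))=\ell(c)$; but $c$ need not lie on any branch $b(v^{\mathrm{bad}})$, so you cannot conclude that $p(w)$ is a branch vertex. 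Tracking the partner of $(p(u^*),u^*)$ only locates a child $c'$ of $w$ with $\ell(c')=\ell(p(u^*))$, which makes $c'$ a branch vertex but again says nothing about $p(w)$. The upward induction from $w$ therefore stalls.

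The paper sidesteps this by reversing the direction of the chase. Arguing by contradiction, suppose the branch vertices have a component $\mathcal C$ not containing the root; pick $u\in\mathcal C$ of step-(ii) type with step-(i) partner $\tilde u$, and walk \emph{up the $\tilde u$ side} along the path $\tilde u=\tilde v_0,\tilde v_1,\dots,\tilde v_k=\text{root}$. Each $\tilde v_j$ is good (else $v_j\in\mathcal C$ would be bad), so $N_{\ell(\tilde v_j)}=2$, and non-backtracking at $\tilde v_j$ (no child of $\tilde v_j$ carries label $\ell(\tilde v_{j+1})$) forces the partner of $(\tilde v_j,\tilde v_{j+1})$ to be incident to the mirror vertex $v_j$. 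This produces $v_{j+1}$ adjacent to $v_j$ with $\ell(v_{j+1})=\ell(\tilde v_{j+1})$. Crucially, the step is indifferent to whether $v_{j+1}$ is the parent or a child of $v_j$: either way $v_{j+1}$ is a step-(ii) branch vertex adjacent to $v_j\in\mathcal C$, hence in $\mathcal C$. Terminating at $v_k$ gives a vertex in $\mathcal C$ carrying the root label, which is bad---the contradiction. Chasing upward on the side where the path to the root is already known, and demanding only adjacency on the mirrored side, is precisely what eliminates your good-/bad-pair dichotomy.
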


\begin{proof}
    $(1)$. Suppose for contradiction that the branch vertices of $\mathcal B$ have another component $\mathcal C$ that does not contain the root. Then $\mathcal C$ contains no bad vertices as that would lead to a path of branch vertices from the bad vertex to the root. Hence, $\mathcal C$ must contain a branch vertex $u$ added by step $(ii)$ of Definition~\ref{defn:bad_subtree}. Let $\tilde{u}$ be the branch vertex (in the root component) with the same label as $u$, which was added in step $(i)$ i.e., it lies on a path from a bad vertex to the root.  

    Let
\[
\tilde{u} = \tilde v_0, \tilde v_1, \dots, \tilde v_k = \text{root}
\]
denote the unique path from $\tilde{u}$ to the root. Observe, $\tilde u$ is a good vertex. There must be another edge between vertices with the same labels as $(\tilde u, \tilde v_1)$ (since each edge appears twice). By the non-backtracking condition, there must be a vertex incident to $u$ with the same label as $\tilde v_1$, which we call $v_1$. Then $v_1\in \mathcal C$ since $\tilde v_1$ is a branch vertex. Repeat the argument for $v_1$, to obtain a vertex $v_2\in \mathcal C$ with the same label as $\tilde v_2$. Continuing to $v_k$ we conclude that $\mathcal C$ contains a vertex with the same label as the root (but is not the root), which is a bad vertex, contradicting that $\mathcal C$ contains no bad vertices.  
Hence, the branch vertices of $\mathcal B$ are a connected subtree that contains the root. Since there is only one path from $u$ to the root this path consists of only branch vertices.

(2). Consider any boundary vertex $v\in \partial \mathcal B$, which, by definition, is a good vertex; otherwise, it would be a branch vertex. First, observe that the branch vertex $u$ incident to $v$ must be the parent of $v$. If $u$ was the child of $v$, then the path from $u$ to the root contains $v$, and by $(1)$, it implies that $v$ must be a branch vertex. There exists another vertex $\tilde v$ with the same label as $v$. $\tilde v$ must be incident to a branch vertex $\tilde u$ since the edges with labels of $(u, v)$ must appear twice, and $u$ is the parent of $v$, so it cannot pair with a child of $v$ by the non-backtracking assumption. Hence, we know there is another boundary vertex $\tilde v$ paired to $v$.

(3). As mentioned in $(2)$, the subtree rooted at each boundary vertex $v\in \partial \mathcal B$ contains no branch vertices, and hence no vertices of $\mathcal B$. All vertices in the subtree are good. Otherwise, there would be a path of branch vertices from a bad vertex to the root passing through $v$, making $v$ a branch vertex. Let $\tilde v\in \partial \mathcal B$ denote the pair vertex of $v$ from $(2)$. Then, the labels of the children of $\tilde v$ and $v$ must all be paired since the labels of $\tilde v$ and $v$ do not appear elsewhere. Continuing this argument, we see that the subtree obtained by gluing the subtrees rooted at $v$ and $\tilde v$ is perfectly Wick paired since it only contains good vertices.
\end{proof}

\begin{prop} \label{prop:num_bad_subtrees}
Let $\mathcal I_{i,s,m,\Delta}$ denote the set of isomorphism classes of
labeled monomial trees with excess $\Delta>0$.
Then the number of distinct bad subtrees $\mathcal B(\mathcal I)$ that can
occur among all $\mathcal I \in \mathcal I_{i,s,m,\Delta}$ is at most
\(
    N^{C_4\Delta/K_1},
\)
for some absolute constant $C_4>0$.
\end{prop}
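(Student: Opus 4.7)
The plan is to argue in two conceptual stages: first bound the \emph{size} of any bad subtree $\mathcal B(\mathcal I)$ in terms of $\Delta$, $t$, and $D$; then count the number of possible $(\text{shape}, \text{label-identification})$ patterns that a bad subtree of that size can realize, finally plugging in the regime $t\le \log N/C_D$ to convert every factor into $N^{O(\Delta/K_1)}$.

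For the size bound, I would start from Lemmas~\ref{lem:Ni_3} and \ref{lem:t4_bound}: the ``seed'' set of vertices that are bad for an intrinsic reason (labels with $N_i\ge 3$, copies of the root label, or endpoints of $\bf{T_3}$ edges) has total cardinality at most $C_0\Delta$ for an absolute constant $C_0$. For each such seed vertex, Definition~\ref{defn:bad_subtree}(i) attaches its unique path to the root, contributing at most $t$ new branch vertices. Step (ii) then adds the at-most-one partner of each such branch vertex, costing another factor of $2$. Step (iii) attaches boundary vertices, which are siblings of branch vertices; since every vertex has out-degree at most $D$, this multiplies the count by at most $D$. Putting the three estimates together yields $|\mathcal B(\mathcal I)| \le C_1\, \Delta\, t\, D$ for some absolute constant $C_1$.

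For the counting stage, let $M := C_1\Delta t D$. First I would bound the number of rooted \emph{unlabeled} shapes on at most $M$ vertices with out-degrees bounded by $D$ by $(C_2D)^{M}$ (each vertex independently picks its degree in $\{0,\ldots,D\}$ and an ordering of children). Next, I must count the number of ways to specify the label-equivalence pattern on these vertices, i.e.\ which branch vertices share a label and which boundary vertices pair up. The seed bad vertices can be grouped by a partition, costing at most $M^{C_0\Delta}\cdot (C_0\Delta)^{C_0\Delta}$; the Wick-style pairing of siblings around each branch vertex contributes at most $(2D)^{2D}$ per branch vertex, so at most $(2D)^{C_3\Delta tD}$ in total; and the paths to the root force their internal identifications by non-backtracking as in the proof of Proposition~\ref{prop:bad_subtree_properties}(1), so these do not add a further factor.

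Combining, the number of distinct bad subtrees is bounded by
\[
(C_2 D)^{M}\cdot M^{C_0\Delta}\cdot (C_0\Delta)^{C_0\Delta}\cdot (2D)^{C_3\Delta tD}
\;\le\; (C_4 D)^{C_5\Delta tD}\cdot M^{C_0\Delta}.
\]
Now I would take logarithms and substitute $t\le \log N/C_D = \log N/\bigl(K_1 D\log(\max\{D,M\})\bigr)$, so that $tD\log D \le \log N/K_1$. This makes $(C_4D)^{C_5\Delta tD}\le N^{C_6\Delta/K_1}$. The auxiliary factor $M^{C_0\Delta}$ is bounded by $(D^t)^{C_0\Delta}\le N^{C_0\Delta/K_1}$ using $D^t\le N^{1/K_1}$. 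Collecting constants gives the stated bound $N^{C_4\Delta/K_1}$.

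The main obstacle is bookkeeping: making sure that the factor $D^D$-type terms appearing when we enumerate local pairings around each bad label do not blow up, which is why we need the full strength of $t\le\log N/(K_1 D\log D)$ rather than just $t\le\log N/\log D$. A secondary subtle point is verifying that once the seed bad vertices and their root-paths are fixed, non-backtracking (as used in Proposition~\ref{prop:bad_subtree_properties}(1)) forces the identifications along the path rather than leaving them as free choices; this is what prevents an extra $N^{\Delta t/K_1}$ factor and keeps the exponent linear in $\Delta$.
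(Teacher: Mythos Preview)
Your proposal is essentially correct and follows the same two-stage structure as the paper (size bound, then shape/identification count), with the same eventual conversion via $tD\log D\le (\log N)/K_1$. Two bookkeeping points deserve mention. First, you omit the root degree $m\le\log\log N$, which the paper tracks by inserting a harmless $(\log\log N)^{\log\log N}=N^{o(1)}$ factor in the size and edge-partition counts; this does not change the final exponent but should be recorded. Second, your claim that ``$(2D)^{2D}$ per branch vertex'' covers the local edge identifications is only literally correct at \emph{good} branch vertices, where the label appears exactly twice; at a bad label appearing on $k\ge3$ vertices one must partition up to $k(D{+}1)$ incident edges, which is the $(\Delta D)^{\Delta D}$-type factor the paper isolates separately (and which, as you correctly note in your last paragraph, is the reason the regime $t\lesssim\log N/(D\log D)$ is needed). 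Since $\log(\Delta D)\le (t{+}1)\log D$, this missing factor is in fact dominated by your $(2D)^{C_3\Delta tD}$ term, so the final bound survives; but you should state this explicitly rather than appealing to non-backtracking as ``forcing'' identifications, which it does not.
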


\begin{proof}
By Lemma~\ref{lem:Ni_3}, at most $6\Delta$ vertices satisfy $N_i\ge 3$,
and at most $2\Delta$ vertices share the root label.
By Lemma~\ref{lem:t4_bound}, there are at most $2\Delta$ ${\bf T_3}$--edges,
each contributing two bad vertices.
Propagating badness through repeated labels
(Definition~\ref{defn:bad_subtree})
increases this by only a constant factor.
Hence
\(|\mathrm{Bad}(\mathcal I)| \le 26\Delta .
\) where $|\mathrm{Bad}(\mathcal I)| $ is the number of bad vertices in $\mathcal I$.

\smallskip
Step 1: Adding paths to the root. Each bad vertex contributes the entire path to the root.
Since each path has length at most $t$, this adds at most
\(
    C_0 \Delta t
\)
additional vertices.
The number of good branch vertices introduced in this way is at most twice this
(there are at most two appearances of each label), so the total number of
branch vertices is bounded by
\(
    C_1 \Delta t .
\)

\smallskip
Step 2: Attaching boundary vertices.
Every non-root branch vertex has a degree of at most $D$ and the root has degree at most $\log \log N$.
Thus, attaching its neighboring boundary vertices contributes at most \(
    C_1 \Delta t D\log \log N \)
additional vertices.
Absorbing constants, we obtain
\(|\mathcal B(\mathcal I)| \le C_1 t \Delta D \log \log N 
\) where $|\mathcal B(\mathcal I)|$ is the number of vertices in the bad subtree.
Set
\( L:=C_1 t \Delta D \log \log N .
\)

\smallskip
Step 3: Counting possible unlabeled shapes.
A rooted tree with at most $L$ vertices and out-degree at most $D$
has at most
\(
    ( C_2 D )^{L}
\)
possible unlabeled shapes (each vertex chooses a degree in $\{0,\dots,D\}$). Once the unlabeled shape is fixed it may be embedded in the underlying unlabeled tree in also at most \(
    ( C_2 D )^{L}
\) ways. This follows since for each vertex we have already chosen its degree and so there are at most $D$ options for the underlying unlabeled tree vertex it corresponds to. We combine the two factors, possibly changing the constant $C_1$.

\smallskip
Step 4: Identifying labels of vertices in $\mathcal B(\mathcal I)$.

(a) Choosing the bad vertices.
There are at most $26\Delta$ bad vertices contained in a total of $L$ vertices,
so the number of ways to choose them and assign their labels is at most
\(
    L^{26\Delta}\, (26\Delta)^{26\Delta}.
\)

\smallskip
(b) Identifying vertices sharing a bad label.
Fix a bad label $i$ that appears on vertices $v_1,\dots,v_k$.
Each $v_j$ has $\ell_j$ children and one parent, so the total number of
incident edges to be grouped is
\(
    \ell_1+\cdots+\ell_k+k.
\)
The number of partitions of these edges is bounded by
\[
    (\ell_1+\cdots+\ell_k+k)!\, 2^{\ell_1+\cdots+\ell_k+k}
    \;\le\;
    (\ell_1+\cdots+\ell_k+k)^{\ell_1+\cdots+\ell_k+k}.
\]
Since $k\le\Delta$ and $\ell_j\le D$, $m\leq \log \log N$, we obtain the bound
\(
    (26\Delta(D+1)+\log \log N)^{26\Delta(D+1)+\log \log N} 
\). If $\Delta D\leq \log \log N$ we can upper bound,
\[
    (26\Delta(D+1)+\log \log N)^{26\Delta(D+1)+\log \log N} \leq (C_3\log \log N)^{C_3\log \log N}\leq N^{1/K_1}
\]
Otherwise, the sum is bounded by
\[
(52\Delta(D+1))^{52\Delta(D+1)}
\]

\smallskip
(c) Pairings along good branches.
Along each branch from a bad vertex, every good vertex $u$
(with $N_i=2$ and incident only to ${\bf T_1}$ and ${\bf T_2}$ edges)
admits at most $(2D)^{2D}$ ways to pair all its neighbors with the other vertex
sharing its label.
Each branch has length at most $t$, and there are at most $52\Delta t D$
such vertices overall, giving a total contribution of
\(
    (2D)^{52\Delta t D}.
\)

Multiplying all contributions yields
\[
\begin{aligned}
&\quad (C_2 D)^L
\; L^{26\Delta} (26\Delta)^{26\Delta}
\; (52\Delta(D+1))^{52\Delta(D+1)}
\cdot N^{1/K_1}\cdot (2D)^{52\Delta t D}\cdot
\end{aligned}
\]
Since
\(
    \Delta \le |V(T)| \le D^t \le N^{1/(DK_1)},
\)
all factors are bounded by
\(
    N^{C_4\Delta/K_1}
\)
for an absolute constant $C_4>0$.
This proves the claim.
\end{proof}

\begin{remark}
\label{rem:growth_rate}
    We remark that in the proof of Proposition \ref{prop:num_bad_subtrees}, a factor of the form $(\Delta D)^{\Delta D}$ appears. This is why it is necessary to take $t\lesssim \log N/(D\log D)$ as opposed to the expected $t\lesssim \log N/(\log D)$. It appears that such a factor is unavoidable in this proof. For instance, there may be trees with $\Delta$ vertices with the same label, and then there will be roughly $(\Delta D-1)!!$ ways to pair the edges incident to those vertices.
\end{remark}

Recall that for a labeled tree \( \mathcal{T} \), its value is defined as
\[
\operatorname{Val}(\mathcal{T})
    = \prod_{v \in V(\mathcal{T})} c_v 
      \cdot \prod_{(u, v) \in E(\mathcal{T})} A_{\phi(u)\phi(v)} 
      \cdot \prod_{\ell \in L(\mathcal{T})} x^0_\ell.
\]
For a bad subtree \( \mathcal{B} \subseteq \mathcal{T} \), we define analogously.
\[
\operatorname{Val}_1(\mathcal{B})
    = \prod_{v \in V(\mathcal{B})\setminus \partial \mathcal B} c_v 
      \cdot \prod_{(u, v) \in E(\mathcal{B})} A_{\phi(u)\phi(v)}\cdot \prod_{\ell \in L(\mathcal{B})\setminus \partial \mathcal B} x^0_\ell
\]
Where $L(\mathcal B)$ denotes the vertices of $\mathcal B$ that are leaf vertices in $\mathcal T$. So, $L(\mathcal B)\setminus \partial \mathcal B$ are the branch vertices that are also leaf vertices of $\mathcal T$. Further, for any subtree $\mathcal S\subset \mathcal T$, define,
\[
\operatorname{Val}_2(\mathcal{S})
    = \prod_{v \in V(\mathcal{S})} c_v 
      \cdot \prod_{(u, v) \in E(\mathcal{S})} A_{\phi(u)\phi(v)} 
      \cdot \prod_{\ell \in L(\mathcal{S})} x^0_\ell.
\]
Furthermore, we define $\Delta(\mathcal B)=\frac{1}{2}|E(\mathcal B)|-|V(\mathcal B)|+1$, where $|V(\mathcal B)|$ denotes the number of distinct vertex labels and $|E(\mathcal T)|$ denotes the number of edges in the subtree $\mathcal B$. 

\begin{lem}
    Let $\mathcal T\in \mathcal T_{i, t, m}(\mathcal B)$ where \[
\mathcal{T}_{i,t,m}(\mathcal{B})
    = \bigl\{\, \text{labeled monomial trees whose associated bad subtree is exactly } \mathcal{B} \,\bigr\}.
\]
Then recall by Lemma~\ref{prop:bad_subtree_properties}, the boundary vertices are paired by labels, giving pairs \( (v,w)\in\partial \mathcal{B}\). Then we have that, \[
\operatorname{Val}(\mathcal T)
= \operatorname{Val}_1(\mathcal B)\cdot
  \Bigg(\prod_{(v, w)\in\partial\mathcal B}\operatorname{Val}_2(\mathcal T_v\star \mathcal T_w)\Bigg)
\]
Where on the RHS we take a product over pairs $(v, w)$ and the notation $\mathcal T_v$ denotes the subtree of $\mathcal T$ rooted at $v$, so $\mathcal T_v\star \mathcal T_w$ is the subtrees rooted at $v$ and $w$ glued at their root. 
\end{lem}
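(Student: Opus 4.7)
\medskip

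\noindent\textbf{Proof plan.} The identity is a bookkeeping statement: once the bad subtree $\mathcal{B}$ has been carved out of $\mathcal{T}$, every vertex, edge, and leaf of $\mathcal{T}$ must appear exactly once (with its correct coefficient) either in $\operatorname{Val}_1(\mathcal{B})$ or in exactly one of the factors $\operatorname{Val}_2(\mathcal{T}_v\star\mathcal{T}_w)$. The plan is to use Proposition~\ref{prop:bad_subtree_properties} to set up the correct partition of $V(\mathcal{T})$, $E(\mathcal{T})$, and $L(\mathcal{T})$, and then check that each of the three products in the definition of $\operatorname{Val}$ factors along this partition.

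\medskip

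\noindent First I would write down the three decompositions. Since the boundary vertices $\partial\mathcal{B}$ are paired by label (Proposition~\ref{prop:bad_subtree_properties}(2)) and each subtree $\mathcal{T}_v$ for $v\in\partial\mathcal{B}$ contains only good vertices disjoint from $\mathcal{B}\setminus\{v\}$ (Proposition~\ref{prop:bad_subtree_properties}(3)), one has the disjoint unions
\[
V(\mathcal{T}) \;=\; \bigl(V(\mathcal{B})\setminus\partial\mathcal{B}\bigr)\;\sqcup\;\bigsqcup_{v\in\partial\mathcal{B}} V(\mathcal{T}_v),
\qquad
L(\mathcal{T}) \;=\; \bigl(L(\mathcal{B})\setminus\partial\mathcal{B}\bigr)\;\sqcup\;\bigsqcup_{v\in\partial\mathcal{B}} L(\mathcal{T}_v),
\]
with each boundary vertex $v$ contained in its own $V(\mathcal{T}_v)$. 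For edges, every edge from a branch vertex to a boundary vertex is included in $E(\mathcal{B})$ by Definition~\ref{defn:bad_subtree}(iii), so
\[
E(\mathcal{T}) \;=\; E(\mathcal{B})\;\sqcup\;\bigsqcup_{v\in\partial\mathcal{B}} E(\mathcal{T}_v),
\]
where $E(\mathcal{T}_v)$ consists only of edges strictly below $v$. These three partitions are the only structural input needed.

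\medskip

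\noindent Next I would match the three types of factors against these partitions. The coefficient product $\prod_{v\in V(\mathcal{T})} c_v$ splits: $v\in V(\mathcal{B})\setminus\partial\mathcal{B}$ contribute to $\operatorname{Val}_1(\mathcal{B})$, and each $v\in\partial\mathcal{B}$ contributes $c_v$ to the unique factor $\operatorname{Val}_2(\mathcal{T}_v\star\mathcal{T}_w)$ (with its paired $w$ contributing $c_w$ to the same factor), while the remaining descendant vertices of $v$ contribute to the same factor through $V(\mathcal{T}_v)$. Here the $\star$ is purely a combinatorial gluing at the root label: the value $\operatorname{Val}_2(\mathcal{T}_v\star\mathcal{T}_w)$ is defined so that both roots retain their individual coefficients, matrix entries above them having already been absorbed into $\operatorname{Val}_1(\mathcal{B})$. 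The edge product splits analogously via the edge partition, and the leaf product via the leaf partition. Multiplying together the factor from $\operatorname{Val}_1(\mathcal{B})$ and the factors indexed by pairs $(v,w)\in\partial\mathcal{B}$ reproduces the three products defining $\operatorname{Val}(\mathcal{T})$.

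\medskip

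\noindent The main obstacle is the precise treatment of the boundary vertices under the $\star$ operation: they are explicitly excluded from the vertex/leaf products in $\operatorname{Val}_1(\mathcal{B})$ but are included in the glued subtree $\mathcal{T}_v\star\mathcal{T}_w$, and since $v$ and $w$ are distinct vertices of $\mathcal{T}$ with the same label but possibly different monomial coefficients, one has to ensure that both $c_v$ and $c_w$ appear in $\operatorname{Val}_2(\mathcal{T}_v\star\mathcal{T}_w)$, and that $x^0_v, x^0_w$ appear in the leaf product when $v,w$ are themselves leaves. Once this convention is fixed (consistent with Lemma~\ref{lem:factor_val_forest} for forests cut along a subtree boundary), the equality follows by direct comparison of the three products. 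No probabilistic argument is required; this is a deterministic identity holding for each labeled monomial tree $\mathcal{T}\in\mathcal{T}_{i,t,m}(\mathcal{B})$.
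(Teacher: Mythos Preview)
Your proposal is correct and follows the same approach as the paper: the identity is purely a bookkeeping check against the definitions of $\operatorname{Val}$, $\operatorname{Val}_1$, $\operatorname{Val}_2$, together with the forest-factorization Lemma~\ref{lem:factor_val_forest}. The paper's own proof is a single sentence to this effect, whereas you have spelled out the vertex/edge/leaf partitions and the handling of the boundary coefficients $c_v,c_w$ explicitly; nothing in your argument diverges from what the paper intends.
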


\begin{proof}
    The proof follows by the definitions of $\operatorname{Val}(\mathcal T), \operatorname{Val}_1(\mathcal B), \operatorname{Val}_2(\mathcal S)$ and is basically Lemma \ref{lem:factor_val_forest}.
\end{proof}

\begin{prop} \label{prop:bad_subtrees}
Let \( \mathcal{B} \) be a possible labeled bad subtree. Then
\[
\sum_{\mathcal{T}\in \mathcal{T}_{i,t,m}(\mathcal{B})} \mathbb E\operatorname{Val}(\mathcal{T})
    =  \mathbb E\operatorname{Val}_1(\mathcal{B})
          \cdot \left[\prod_{(v,w)\in \partial \mathcal{B}}
              \mathrm{Wick}\left( T(x_{i_v}^{s_v}) \star T(x_{i_w}^{s_w}) \right)+O(N^{-1+10/K_1})\right]
\]

Here, boundary vertices are paired by labels, giving pairs \( (v,w)\in\partial \mathcal{B} \).  
- For a boundary vertex \( v \), the number \( s_v \) denotes its generation, and \( T(x_{i_v}^{s_v}) \in \mathcal{A} \) is the moment-algebra element corresponding to the iterate \( x_{i_v}^{s_v} \), where $i_v$ is the label of vertex $v$ in $\mathcal B$.
\end{prop}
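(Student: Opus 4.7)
The plan is to combine three ingredients: the deterministic factorization of $\operatorname{Val}(\mathcal T)$ given by the preceding lemma, independence of $A_{ij}$'s indexed by disjoint unordered pairs, and Proposition~\ref{prop:delta_zero_covariance} applied separately to each boundary pair. The strategy is parallel to the proof of Proposition~\ref{prop:sum_delta_0_accurate}: fix $\mathcal B$ (with its labeling), peel off the bad subtree as a deterministic skeleton, and recognize the remaining sum as a product of $\Delta = 0$ subtree sums at the boundary.

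First, for every $\mathcal T\in\mathcal T_{i,t,m}(\mathcal B)$ the preceding factorization lemma gives
\[
\operatorname{Val}(\mathcal T)=\operatorname{Val}_1(\mathcal B)\prod_{(v,w)\in\partial\mathcal B}\operatorname{Val}_2(\mathcal T_v\star\mathcal T_w).
\]
By Proposition~\ref{prop:bad_subtree_properties}(3), each glued subtree $\mathcal T_v\star\mathcal T_w$ is Wick paired with excess $0$, and the labels it introduces must be fresh, i.e. distinct from every label appearing in $\mathcal B$ (otherwise a new bad vertex would appear outside $\mathcal B$, contradicting the assumption that the bad subtree is exactly $\mathcal B$). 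Similarly, fresh labels across different boundary pairs may be taken disjoint up to lower-order error.

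Second, with the labels of $\mathcal B$ held fixed, the matrix entries appearing in $\operatorname{Val}_1(\mathcal B)$ are indexed by unordered pairs inside $V(\mathcal B)$, while those appearing in each $\operatorname{Val}_2(\mathcal T_v\star\mathcal T_w)$ have at least one endpoint outside $V(\mathcal B)$. Hence the index-pair collections are pairwise disjoint, the corresponding $A_{ij}$'s are independent, and the expectation factors as
\[
\mathbb E\operatorname{Val}(\mathcal T)=\mathbb E\operatorname{Val}_1(\mathcal B)\prod_{(v,w)\in\partial\mathcal B}\mathbb E\operatorname{Val}_2(\mathcal T_v\star\mathcal T_w).
\]
Summing over $\mathcal T\in\mathcal T_{i,t,m}(\mathcal B)$ then corresponds to independently choosing, at each boundary pair $(v,w)$, a labeled $\Delta = 0$ isomorphism class $\mathcal I_{vw}$ of Wick-paired subtrees rooted at $(v,w)$ with fresh labels. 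Relaxing the global disjointness of fresh labels between different boundary pairs costs only $O(1/N)$ per forbidden coincidence (via $\mathbb E A_{ij}^2=1/N$), so the sum becomes a product of independent per-pair sums up to a negligible error. Proposition~\ref{prop:delta_zero_covariance} applied to each pair yields
\[
\sum_{\mathcal I_{vw}}\mathbb E\operatorname{Val}_2(\mathcal I_{vw})=\mathrm{Wick}\bigl(T(x_{i_v}^{s_v})\star T(x_{i_w}^{s_w})\bigr)+O(N^{-1+10/K_1}).
\]

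The main obstacle is the final bookkeeping step: collapsing the product of per-pair approximations into a single error of size $O(N^{-1+10/K_1})$ inside the brackets. Expanding $\prod_{(v,w)}(W_{vw}+\varepsilon_{vw})$ introduces cross terms with up to $|\partial\mathcal B|/2$ copies of the error, and $|\partial\mathcal B|$ can be as large as $C_1 t\Delta D\log\log N$ from the bound used in Proposition~\ref{prop:num_bad_subtrees}. The point is that under Assumption~\ref{assump:bound} each $W_{vw}$ is bounded by a constant depending only on $M$, and in the regime $t\le (\log N)/C_D$ with $C_D=K_1 D\log(\max\{D,M\})$, the product $M^{|\partial\mathcal B|}$ is absorbed into $N^{O(1/K_1)}$; together with the $O(1/N)$ from relaxing disjointness, the total cross-term error remains $O(N^{-1+10/K_1})$ after possibly enlarging the implicit constant. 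Once this is verified, combining the three steps gives the stated identity.
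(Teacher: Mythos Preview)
Your overall strategy matches the paper's: factor $\operatorname{Val}(\mathcal T)$ via the preceding lemma, use independence to split the expectation, and identify the boundary contributions with the $\Delta=0$ theory. The difference is in how you invoke that theory, and this is where a genuine gap appears.

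You apply Proposition~\ref{prop:delta_zero_covariance} \emph{separately} to each boundary pair $(v,w)$, obtaining
\[
\sum_{\mathcal I_{vw}}\mathbb E\operatorname{Val}_2(\mathcal I_{vw})=W_{vw}+\varepsilon_{vw},\qquad |\varepsilon_{vw}|=O(N^{-1+10/K_1}),
\]
and then expand $\prod_{(v,w)}(W_{vw}+\varepsilon_{vw})$. Your claim that the cross terms are harmless rests on ``$M^{|\partial\mathcal B|}$ is absorbed into $N^{O(1/K_1)}$.'' But $|\partial\mathcal B|\le C_1 t\Delta D$ (plus the root contribution), and with $t\le\log N/(K_1 D\log M)$ one gets $M^{|\partial\mathcal B|}\le N^{C_1\Delta/K_1}$, not $N^{O(1/K_1)}$. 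So your combined error is $O(N^{-1+(10+C_1\Delta)/K_1})$, which depends on $\Delta(\mathcal B)$ and does not give the uniform $O(N^{-1+10/K_1})$ stated in the proposition. The same $\Delta$-dependence infects the ``relaxing disjointness'' step, where the number of forbidden coincidences also scales with $|\partial\mathcal B|$.

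The paper sidesteps this entirely: rather than working pair by pair, it observes that the full collection $\{\mathcal T_v\star\mathcal T_w : (v,w)\in\partial\mathcal B\}$ can be viewed as a \emph{single} $\Delta=0$ tree with a fictitious root whose value is $1$. One then runs the argument of Proposition~\ref{prop:sum_delta_0_accurate} once on this single tree, which directly yields
\[
\sum\prod_{(v,w)}\mathbb E\operatorname{Val}_2(\mathcal T_v\star\mathcal T_w)=\prod_{(v,w)}\mathrm{Wick}\bigl(T(x_{i_v}^{s_v})\star T(x_{i_w}^{s_w})\bigr)+O(N^{-1+10/K_1})
\]
with a single error term, and simultaneously handles the label-disjointness between different boundary pairs as part of the injective-versus-all-labelings analysis. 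No product of errors ever appears.
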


\begin{proof}
Every \( \mathcal T \in \mathcal{T}_{i,t,m}(\mathcal{B}) \) can be written uniquely as
\[
\mathcal T \;=\; \mathcal B \;\cup\; \bigcup_{(v, w)\in\partial\mathcal B} \mathcal T_v\star \mathcal T_w,
\]
where \(\mathcal T_v\) is the (possibly trivial) good subtree attached at the boundary vertex \(v\), and $\mathcal T_v\star \mathcal T_w$ denotes the two subtrees of a pair of boundary vertices $v$ and $w$ glued at the root. The excess $\Delta(\mathcal T_v\star \mathcal T_w)=0$ is given by Proposition \ref{prop:bad_subtree_properties} part (3). Furthermore, since $\mathcal T_v\star \mathcal T_w$ contains only good vertices, it is disjoint from the other components of $\mathcal T$.

For such a decomposition,
\[
\operatorname{Val}(\mathcal T)
= \operatorname{Val}_1(\mathcal B)\cdot
  \Bigg(\prod_{(v, w)\in\partial\mathcal B}\operatorname{Val}_2(\mathcal T_v\star \mathcal T_w)\Bigg)
\]
The remainder of the proof essentially follows by grouping labeled trees $\mathcal T_v\star \mathcal T_w$ into isomorphism classes and recalling the definition of the Wick pairing product. By Lemma \ref{lem:delta_0_perfect_pairing} each labeled subtree $\mathcal T_v\star \mathcal T_w$ is perfectly Wicked paired with root label $i_v=i_w$. If vertices $v, w$ are in generations $s_v, s_w$, then $T(x_{i_v}^{s_v})$ and $T(x_{i_w}^{s_w})$ denote precisely the elements of the moment algebra corresponding to the weighted sum of possible unlabeled subtrees from $v$ and $w$. By Proposition \ref{prop:delta_zero_covariance}, we have the equality \[\sum_{\mathcal T_v, \mathcal T_w}\mathbb E\operatorname{Val}_2(\mathcal T_v\star \mathcal T_w)=\operatorname{Wick}(T(x_{i_v}^{s_v})\star T(x_{i_w}^{s_w}))+O(N^{-{1+10/K_1}}))\] 
The same argument will show that,
\[
\Bigg(\sum_{\mathcal T_v, \mathcal T_w|(v, w)\in \mathcal \partial \mathcal B}\prod_{(v, w)\in\partial\mathcal B}\operatorname{Val}_2(\mathcal T_v\star \mathcal T_w)\Bigg)=\prod_{(v, w)\in \partial \mathcal B}\mathrm{Wick}\!\left( T(x_i^{s_v}) \star T(x_i^{s_w})\right)+O(N^{-{1+10/K_1}})
\]
Since we can view the collection of tree pairs over $\partial \mathcal B$ as a single tree with a root that does not contribute to its value.

Summing \(\operatorname{Val}(\mathcal T)\) over all \(\mathcal T\in\mathcal T_{i,t,m}(\mathcal B)\), we get
\[
\sum_{\mathcal T\in\mathcal T_{i,t,m}(\mathcal B)} \mathbb E\operatorname{Val}(\mathcal T)
=
\mathbb E\operatorname{Val}(\mathcal B)
\cdot
\left[\prod_{(v,w)\in\partial\mathcal B}
\mathrm{Wick}\!\left( T(x_i^{s_v}) \star T(x_i^{s_w})\right)+O(N^{-{1+10/K_1}}) \right]
\]
which is exactly the claimed identity.
\end{proof}

The Proposition \ref{prop:bad_subtrees} allows us to bound the contribution from trees with excess $\Delta>0$.

\begin{proof}[Proof of Theorem \ref{thm:State_evolution_approximation_polynomial}]
    Let \( T(x_i^t) \) denote the collection unlabeled monomial trees (with monomial degrees \( d_v \leq D \)) of depth \( t \) corresponding to the iterate \( x_i^t \). Then by Lemma~\ref{lem:tree_expansion}, we have
\[
\mathbb{E}\left[(x_i^t)^{m}\right] = \sum_{T\in T(x_i^t)} \sum_{\phi: V(T) \to [N]}\mathbb{E} \operatorname{Val}(\mathcal{T}_\phi),
\]
where \( \phi \) is a labeling of the tree \( T \), and \( \mathcal{T}_\phi \) is the corresponding labeled monomial tree.

We group the sum over labelings according to the isomorphism class \( \mathcal{I} \) of the labeled tree \( \mathcal{I}_\phi \), and classify these by their excess \( \Delta(\mathcal{I}) \). Let us first consider the contribution from isomorphism classes \( \mathcal{I} \) with \( \Delta(\mathcal{I}) = 0 \). By Lemma~\ref{lem:delta_0_perfect_pairing}, these correspond exactly to Wick pairings, and the number of such isomorphism classes is \( \mathrm{Wick}(T) \). Each such class contributes:
\[
\sum_{\psi: V(\mathcal{L}) \to [N] \text{ injective}} \prod_{v \in V(T_\mathcal{I})} c_v \cdot \prod_{(u, v) \in E(T_\mathcal{I})} A_{\psi(u)\psi(v)} \cdot \prod_{\ell \in L(T_\mathcal{I})} x^0_\ell.
\]

Each distinct edge contributes \( \mathbb{E}[A_{ij}^2] = 1/N \), so the edge product yields \( N^{-|E(\mathcal{L})|/2} \). By Proposition \ref{prop:sum_delta_0_accurate} the total contribution from all \( \Delta = 0 \) terms is:
\[
\mathbb E\left[(y_i^t)^{m}\right] + O(N^{-1 + 10/K_1})
\]
or $0$ when $m$ is odd. Now consider terms with \( \Delta(\mathcal{T}_\phi) > 0 \). For each such tree $\mathcal T_\phi$ let $\mathcal B_\phi$ denote its bad subtree. By Proposition \ref{prop:bad_subtrees},

\[
\sum_{\mathcal T\in\mathcal T_{i,t,m}(\mathcal B)} \operatorname{Val}(\mathcal T)
=
 \mathbb E\operatorname{Val}(\mathcal B)
\cdot
\left[\prod_{(v,w)\in\partial\mathcal B}
\mathrm{Wick}\left( T(x_i^{s_v}) \star T(x_i^{s_w}) \right)+O(N^{-1+10/K_1})\right]
\]

By the Cauchy Schwarz inequality: \[\operatorname{Wick}(T(x_i^{s_v})\star T(x_i)^{s_w})\leq \operatorname{Wick}(T(x_i^{s_v}))^{1/2}\cdot  \operatorname{Wick}(T(x_i^{s_w}))^{1/2}\leq \mathbb E[(x_i^{s_w})^2]^{1/2}\mathbb E[(x_i^{s_v})^2]^{1/2}\leq M\]
Furthermore, the number of vertices in $\partial \mathcal B$ is at most $C_3Dt\Delta(\mathcal B)$ where $C_3$ is a absolute constant. This follows because there are at most $C_1\Delta(\mathcal B)$ bad vertices hence at most $C_1t\Delta(\mathcal B)$ branch vertices. Recall as in Theorem \ref{thm:universality_algebra_elements},
\[
\mathbb{E}\left[\prod_{(u,v) \in E(\mathcal{B})} A_{i_ui_v}\right]
\leq C_2^{6\Delta} \cdot (6\Delta)^{3\Delta} \cdot N^{-|E(\mathcal{B})|/2}.
\]
Where $i_u, i_v$ denote the labels of vertices $u$ and $v$. Recall the assumption $\mathbb E|x_i^0|^p\leq (Kp)^{p/2}$
so:
\[
\mathbb E\left| \prod_{\ell \in L(\mathcal{B}\backslash \partial \mathcal B)} x^0_\ell \right| \leq  (K\Delta)^{(K\Delta)}\cdot \tau_0^{2k},
\]
where \( k \) is the number of leaf pairs, and the excess comes from leaves with multiplicity \( >2 \). Also, $c_{s, d}<M$ by assumption. This yields,
\[
\mathbb E\operatorname{Val}(\mathcal B)
\cdot
\prod_{(v,w)\in\partial\mathcal B}
\mathrm{Wick}\!\left( T(x_i^{s_v}) \star T(x_i^{s_w}) \right)\leq (2M)^{t\Delta(\mathcal  B)}\cdot (CD)^{CD}\cdot M^{C_3Dt\Delta(\mathcal B)}\cdot N^{-\Delta(\mathcal B)}
\]

By Proposition \ref{prop:num_bad_subtrees} we have that the number of possible subtrees $\mathcal B$ with excess $\Delta$ is at most $N^{C_\Delta/K_1}$. Therefore when we sum the tree with positive excess $\mathcal T_{i,t,m,>0}$ we get,
\begin{align*}
\sum_{\mathcal T\in\mathcal T_{i,t,m,>0}} \mathbb E\operatorname{Val}(\mathcal T)&=\sum_{\mathcal B|\Delta(\mathcal B)=\Delta}\sum_{\mathcal T\in\mathcal T_{i,t,m}(\mathcal B)} \mathbb E\operatorname{Val}(\mathcal T)\leq\\ &\sum_{\Delta\geq 1/2}N^{C_4\Delta/K_1}\cdot M^{t\Delta}\cdot (CD)^{CD}\cdot M^{C_3Dt\Delta}\cdot N^{-\Delta}
\end{align*}
Now the RHS is $O(N^{-1/2+B/K_1})$ since $t\leq \log N/(K_1D\max\{\log M, \log D\})$ for some absolute constant $B$. By Proposition \ref{prop:bad_subtrees} when we sum over trees with excess $0$, $\mathcal T_{i,t,m,0}$, we get,
\[
\sum_{\mathcal T\in\mathcal T_{i,t,m}}\mathbb E\mathrm{Val}(\mathcal T)=\mathbb E(y_i^t)^{2m}+O(N^{-1/2+B/K_1})
\]
Hence the Theorem is proven.
\end{proof}

\begin{proof}[Proof of Theorem \ref{thm:convergence_amp_probability}]
    By Theorem \ref{thm:State_evolution_approximation_polynomial} it remains to prove that $\mathrm{Var}(M_{t,m}^{(N)})\to0$ as $N\to\infty$. Write $X_i^{(N)}:=(x_i^t)^m$ and $\mu_{t,m}:=\mathbb E[y_t^m]$.
By exchangeability,
\[
\mathrm{Var}(M_{t,m}^{(N)})
 = \frac1N \mathrm{Var}(X_1^{(N)}) + \frac{N-1}{N}\mathrm{Cov}(X_1^{(N)},X_2^{(N)}).
\]
By Theorem~\ref{thm:State_evolution_approximation_polynomial}
with $m$ replaced by $2m$ we have
\[
\mathbb E\big[(X_1^{(N)})^2\big] = \mathbb E\big[y_t^{2m}\big] + O\big(N^{-1/2+B/K_1}\big),
\]
so in particular $\sup_N\mathbb E[(X_1^{(N)})^2]<\infty$ and hence $\mathrm{Var}(X_1)=O(1)$. Therefore
\[
\frac1N\mathrm{Var}(X_1^{(N)}) = O\Big(\frac1N\Big)\to 0.
\]
It remains to bound the covariance term $\mathrm{Cov}(X_1^{(N)},X_2^{(N)})$.

For this, consider the mixed moment
\[
\mathbb E\big[X_1^{(N)}X_2^{(N)}\big]
  = \mathbb E\big[(x_1^t)^m(x_2^t)^m\big].
\]
As in the proof of Theorem~\ref{thm:State_evolution_approximation_polynomial},
this expands as a sum over pairs of monomial trees with two distinguished
roots labelled $1$ and $2$, together with vertex labelings. The same grouping
by isomorphism classes and the same excess parameter $\Delta$ apply. An
isomorphism class may have $\Delta=0$ only when each component tree has
$\Delta=0$ individually and the two components share no labels; in this case
the Wick functional factorizes and gives
\[
\mathbb E[X_1^{(N)}X_2^{(N)}]
 = \mathrm{Wick}(T(x_1^t)^m)\cdot\mathrm{Wick}(T(x_2^t)^m)
 + O(N^{-1+10/K_1})
 = \mu_{t,m}^2 + O(N^{-1+10/K_1}).
\]

To control the contribution of $\Delta>0$ classes, we use the same
bad-subtree decomposition as in Definition~\ref{defn:bad_subtree}.
In the tree expansion of $\mathbb E[X_1^{(N)}X_2^{(N)}]$, each term is indexed by a
pair of labeled monomial trees $(\mathcal T_1,\mathcal T_2)$ corresponding to $(x_1^t)^m$ and $(x_2^t)^m$,
together with a labeling of their vertices. For each such pair we form a single
monomial tree $\widetilde{\mathcal T}$ by gluing $\mathcal T_1$ and $\mathcal T_2$ under a new root of
degree $2$ (with polynomial weight $1$). The value of the glued tree is then
exactly the product of the values of the two original trees, so the sum over
pairs $(\mathcal T_1,\mathcal T_2)$ may be identified with a sum over such glued trees
$\widetilde{\mathcal T}$.

We define the bad subtree of a pair $(\mathcal T_1,\mathcal T_2)$ to be the bad subtree of its
glued tree $\widetilde{\mathcal T}$. Since each glued tree has the same depth, degrees,
and polynomial weights as in Theorem~\ref{thm:State_evolution_approximation_polynomial},
the estimates there apply similarly to this family. In particular, the total
contribution of all $\Delta>0$ glued trees (and hence of all $\Delta>0$
pairs $(\mathcal T_1,\mathcal T_2)$) is
\[
O(N^{-1/2+B/K_1}),
\]
with constants depending only on $t,m$.

On the other hand, Theorem~\ref{thm:State_evolution_approximation_polynomial} gives
\[
\mathbb E\big[X_1^{(N)}\big] = \mu_{t,m} + O\big(N^{-1/2+B/K_1}\big),
\]
and the same for $X_2^{(N)}$, so
\[
\mathbb E\big[X_1^{(N)}\big]\mathbb E\big[X_2^{(N)}\big]
  = \mu_{t,m}^2 + O\big(N^{-1/2+B/K_1}\big).
\]
Subtracting, we obtain
\[
\mathrm{Cov}(X_1^{(N)},X_2^{(N)})
  = O\big(N^{-1/2+B/K_1}\big).
\]
Thus
\[
\mathrm{Var}(M_{t,m}^{(N)})
 = O\Big(\frac1N\Big) + O\big(N^{-1/2+B/K_1}\big)\xrightarrow[N\to\infty]{}0,
\]
since $K_1$ is fixed and large. By Chebyshev's inequality this implies
$M_{t,m}^{(N)}-\mathbb E[M_{t,m}^{(N)}]\to 0$ in probability, and
Theorem~\ref{thm:State_evolution_approximation_polynomial} gives
$\mathbb E[M_{t,m}^{(N)}]\to\mu_{t,m}=\mathbb E[y_t^m]$, so
$M_{t,m}^{(N)}\to \mathbb E[y_t^m]$ in probability as claimed.
\end{proof}

\subsection{Counter Example}
\label{sec:counter_example}
We now show that the growth-rate condition in Theorem
\ref{thm:State_evolution_approximation_polynomial} is sharp.  
The theorem shows that state evolution remains valid for a fixed degree $D$ and for any
$t = O(\log N)$, but may fail when $D$ grows with $N$. (Note here that the implicit constant of $O(\cdot)$ depends on $D$)
We construct an explicit AMP instance (satisfying all hypotheses
except the growth bound) for which a single tree diagram contributes an exponentially
larger amount than the state evolution prediction once
\[
t \;\gtrsim\; \frac{\log N}{D\log D}.
\]

\begin{prop}
There exists an AMP instance satisfying all assumptions of
Theorem~\ref{thm:State_evolution_approximation_polynomial}
except the iteration bound, for which state evolution fails
when
\(
\frac{\log N}{D\log D}\lesssim t\lesssim\frac{\log N}{\log D}.
\)
In this example $D$ should grow with $N$.
\end{prop}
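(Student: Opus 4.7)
The plan is to exhibit a single labeled tree whose contribution to $\mathbb{E}[(x_i^t)^2]$ already dominates the state-evolution prediction throughout the claimed window; since Gaussianity makes every tree expectation nonnegative, this is enough to refute state evolution. I would take $f_0(z)=z$ and $f_s(z)=z^D/\sqrt{(2D-1)!!}$ for $1\le s\le t-1$ (so that $\tau_s\equiv 1$ and all monomial coefficients lie in $[0,1]$, validating Assumption~\ref{assump:bound} with $M=1$), initialization $x^0=\mathbf{1}_N$, and $A_{ij}\sim\mathcal{N}(0,1/N)$.

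The candidate bad class $\mathcal{I}$ is the \emph{cascade collapse}: the labeled tree for $(x_i^t)^2$ in which, for every $s\in\{1,\dots,t-1\}$, all $2D^{s-1}$ vertices at depth $s$ from the root share one common label $j_s$, while the $2D^{t-1}$ leaves are partitioned into $D^{t-1}$ sibling pairs with pairwise distinct labels $\ell_1,\dots,\ell_{D^{t-1}}$; the labels $i,j_1,\dots,j_{t-1},\ell_1,\dots,\ell_{D^{t-1}}$ are all mutually distinct. I would first check that this class actually appears in the tree expansion: the non-backtracking condition reduces to $j_{s-1}\ne j_{s+1}$ and $j_{t-2}\ne\ell_r$, both automatic, and the Onsager subtraction only removes trees containing a parent-grandchild label collision, which the cascade avoids by construction.

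The contribution then factorizes because the matrix entries $A_{ij_1},A_{j_1j_2},\dots,A_{j_{t-2}j_{t-1}}$ and the $D^{t-1}$ leaf entries $A_{j_{t-1}\ell_r}$ are all distinct and hence independent, with multiplicities $2,2D,\dots,2D^{t-2}$ and $2$ respectively. Using $\mathbb{E}[A^{2k}]=(2k-1)!!/N^k$, an injective-labeling count $\sim N^{t-1+D^{t-1}}$, the $(2D^{t-1}-1)!!$ ways to partition the leaves into labeled sibling pairs, and the rescaling factor $((2D-1)!!)^{-S_{t-2}}$ from the $c_v$'s at the $2S_{t-2}$ interior vertices (with $S_{t-2}:=1+D+\dots+D^{t-2}$), the total contribution of $\mathcal{I}$ to $\mathbb{E}[(x_i^t)^2]$ equals
\[
\frac{\prod_{s=1}^{t-1}(2D^s-1)!!}{\bigl((2D-1)!!\bigr)^{S_{t-2}}}\cdot N^{\,t-1-S_{t-2}}.
\]
Since $\tau_t^2=1$ after rescaling, this must be compared directly with $1$. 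The Stirling-type estimate $\log((2k-1)!!)=k\log k\,(1+o(1))$ together with $\sum_{s=1}^{t-1}sD^s\sim(t-1)D^{t-1}$ reduces its logarithm to $(t-2)D^{t-1}\log D-S_{t-2}\log N+O(D^{t-1})$, which diverges to $+\infty$ precisely when $tD\log D\gg\log N$, matching the lower end of the stated window. The upper bound $t\lesssim\log N/\log D$ enters only through the feasibility constraint $D^{t-1}\le N$ needed to realize $D^{t-1}$ distinct leaf labels in $[N]$; jointly, these two conditions force $D\to\infty$, explaining the last sentence of the proposition.

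The main obstacle I anticipate is the combinatorial bookkeeping of the factor $(2D^{t-1}-1)!!$: one must verify that the partitions of the $2D^{t-1}$ leaves into $D^{t-1}$ labeled sibling pairs yield genuinely distinct labeled trees, so that the super-exponential factor $(\Delta D)^{\Delta D}$ flagged in Remark~\ref{rem:growth_rate}, with $\Delta\approx D^{t-2}$ for this class, is captured in full. This is precisely the factor that makes the bound $t\lesssim\log N/(D\log D)$ sharp rather than the naive $t\lesssim\log N/\log D$, and once it is pinned down the Stirling comparison above is routine.
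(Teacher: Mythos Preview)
Your proposal is correct and follows essentially the same construction as the paper: the same cascade-collapse labeling (all internal generations collapsed to a single label, leaves paired arbitrarily), the same driving factor $(2D^{t-1}-1)!!$ coming from the freedom to pair leaves once their common parent label is fixed, and the same Stirling comparison yielding the threshold $t\gtrsim \log N/(D\log D)$. Two small remarks: your phrase ``sibling pairs'' is misleading (you correctly count \emph{all} $(2D^{t-1}-1)!!$ pairings, which works precisely because every leaf's parent carries the common label $j_{t-1}$), and your derivation is in fact slightly tighter than the paper's, since you keep the intermediate Gaussian moment factors $(2D^s-1)!!$ and explicitly invoke nonnegativity of Gaussian tree expectations to justify the lower bound, both of which the paper leaves implicit.
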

\begin{proof}
Assume $D$ is even and let
\[
f_s(x) = \frac{x^D}{\sqrt{(2D-1)!!}}, \qquad 0\le s\le t.
\]
For $Z\sim\mathcal N(0,1)$ we have
$\mathbb E[f_s(Z)^2]=1$, and hence the state-evolution recursion gives
$\tau_s^2=1$ for all $s$.  
Initialize $x^0=\mathbf 1_N$.  
Thus the uniform boundedness assumptions on the coefficients and variances are satisfied. The off-diagonal entries of $\sqrt N\cdot A$ are IID Gaussian $A_{ij}\sim\mathcal N(0,1)$.

We consider the iterate $(x_i^{t})^m$, whose tree expansion consists of one unlabeled tree $T$, whose root has degree $m$ and all other vertices have degree $D$. Then $T$ satisfies:
\begin{itemize}
    \item the tree has depth $t$
    \item the root (generation $0$) has degree $m$
    \item every internal vertex has degree $D+1$ (one parent edge, $D$ child edges)
    \item the last generation $t$ (depth $t$ and furthest from root) consists of exactly \(mD^{t-1}\) leaves.
\end{itemize}

We consider labelings of $T$ in which:
\begin{itemize}
    \item[(1)] all vertices in generations $0,1,\dots,t-1$ share a common label  
   (so generation $s$ has label $\ell_s$)
   \item[(2)] the leaves at depth $t$ (generation $t$) are paired arbitrarily and each pair receives a single label.
\end{itemize}

The first condition gives exactly \(N^{t-1}\) choices for the labels \(\ell_1,\dots,\ell_{t-1}\). For the leaves, the number of possible pairings is \(
(mD^{t-1}-1)!!,
\) since all vertices in generation $t$ have the same label
we can choose any pairing of the leaf vertices. The number of ways to assign labels to the pairs is the falling factorial
\(
(N)_{mD^{t-1}/2}
\sim
N^{mD^{t-1}/2}
\) since we assume $D^{t-1}\lesssim N^c$ (since $t\lesssim\log N/\log D$) where $c<1/2$ is a constant.

Thus the number of labelings of this type is
\(
N^{t-1} \cdot (mD^{t-1}-1)!! \cdot N^{mD^{t-1}/2}
\) up to the constant factor from the approximation for the falling factorial. Each edge contributes a factor \(N^{-1/2}\) from the scaling of $A$.   
The tree has  
\(
m + mD + mD^2 + \cdots + mD^{t-1}
\)
edges. The AMP normalization gives a factor
\(
N^{-\#E} \geq  N^{-(m + mD + mD^2 + \cdots + mD^{t-1})/2}.
\)
The normalization of the coefficients in $f_s$ gives a factor of $\frac{1}{\sqrt{(2D-1)!!}}$ for each non-leaf vertices for a total factor \[[(2D-1)!!]^{-(m+mD+\dots +m D^{t-2})/2}\geq (2D)^{-4mD^{t-1}}\]
Multiplying by the number of labelings, we obtain,
\begin{align*}
\text{Contribution}(T)
&\gtrsim
N^{t-1} \cdot (mD^{t-1}-1)!! \cdot N^{-(m + mD + mD^2 + \cdots + mD^{t-1})/2} \cdot N^{mD^{t-1}/2}\cdot (2D)^{-4mD^{t-1}}\\
&\gtrsim
(mD^{t-1}-1)!!\cdot N^{-mD^{t-2}}\cdot (2D)^{-4mD^{t-1}}
\end{align*}
Using Stirling approximation,
\[
(mD^{t-1}-1)!! \sim (mD^{t-1})^{mD^{t-1}/2} \;=\; \exp\!\left( \frac{mD^{t-1}}{2}\log ({m D^{t-1}} )\right)
 = \exp\!\left( \frac{mD^{t-1}}{2}\cdot (t-1)\log (mD) \right)
\]

Hence
\begin{align}
\log\big(\text{Contribution}(T)\big)
&\gtrsim\\ \label{eq:exponent2} mD^{t-1}(t-1)\log(mD)&-\log N\cdot (mD^{t-2})-\log(2D)(4mD^{t-1})\gtrsim
\\ \label{eq:exponent}
mD^{t-1}(t-9)\log(mD)&-\log N\cdot (mD^{t-2})
\end{align}

Where $(t-9)$ comes from absorbing the third factor of \eqref{eq:exponent2}. State evolution predicts a contribution of order \(1\).
Therefore, SE fails exactly when the exponent on the RHS of \eqref{eq:exponent} grows with $N$. Assume $m=2$ or any small constant.

The first term dominates as soon as
\[
D^{t-1}  t\log D \gg \log N \cdot D^{t-2}
\]

Rearranging gives
\[
t \;\gtrsim\; \frac{\log N}{D\log D},
\]
which is precisely the boundary claimed.

Thus for any \(t\) in this regime the contribution of a single tree diagram diverges and the state evolution approximation breaks down.
\end{proof}

\section{AMP with a Spiked Matrix Model}
\label{sec:spiked}

In this section, we give a brief overview of the Moment Algebra for a spiked model. We believe a similar argument should work for the spiked model, but we do not provide this argument in this work.

Consider AMP applied to a symmetric rank \( 1 \) spiked Wigner model:
\begin{align}\label{eq:A_spiked_rank_r}
A = \frac{\lambda}{N} v_* v_*^\top + W,
\end{align}
where \( W\in \mathbb R^N \) satisfies the same assumptions as before (symmetric, sub-Gaussian entries, variance \(1/N\)), and \( v_* \in \mathbb{R}^N \) satisfies \( \| v_* \|^2 = N \). For simplicity assume the initial condition is $x^0=1_N\in \mathbb R^N$ i.e. the vector of all $1$'s in $\mathbb R^N$. Then we have that $\sigma_0=\sigma_1=1$ and $\mu_1=\frac{1}{N}\langle x^0, v^*\rangle=\sum_i\lambda v_{*, i}/N $. Let $V=\lambda v_*v_*^\top/N$ 

The main difference between the spiked and unspiked models is that one should keep track of the signal, which can be done in the following way. Each edge in a tree corresponding to an AMP term is assigned a color:
\begin{itemize}
\item \textbf{Yellow edges:} correspond to \( V \) terms.
\item \textbf{Blue edges:} correspond to \( W \) terms.
\end{itemize}

The colored Wick moment algebra is the natural generalization of the unspiked moment algebra. The main idea is to track the number of ways to pair the blue edges and then multiply by the deterministic contribution from the yellow edges.

\begin{defn}[Colored Tree]
A \textbf{colored tree} \( \mathcal T \) is a tree arising from the expansion of AMP iterates, where each edge is assigned a color: yellow (signal) or blue (noise). The total contribution of a tree is calculated as a product over all edges.
\[
\prod_{\text{yellow edges: (i, j)}} \frac{\lambda}{N}v_{*,i}v_{*,j}\;   \prod_{\text{blue edges: (i, j)}} W_{ij} 
\]
where the product runs over edges, and the \( u,v \) are the labels of the incident vertices.
\end{defn}

The following Spiked Wick product on unlabeled colored trees will allow one to compute the expected contribution of the colored tree to the Spiked AMP state evolution moment formulas.

\begin{defn}[Spiked Wick Product]\label{defn:colored_wick_product}
Let \( T \) be a rooted, colored, unlabeled monomial tree (with the root labeled by \( i \in [N] \) and all other vertices unlabeled), where each edge is either:
\begin{itemize}
    \item blue (representing a noise edge \( W_{ij} \)), or
    \item yellow (representing a signal edge from the rank-one spike \( \frac{\lambda}{N} v_* v_*^\top \)).
\end{itemize}

We define the \emph{spiked Wick product} \( \mathrm{Wick}_{\mathcal{S}}({T}) \in \mathbb{R} \) recursively as follows:
\begin{itemize}
    \item \textbf{Base case:} If \( {T} \) is a leaf, then
    \[
    \mathrm{Wick}_{\mathcal{S}}({T}) := 1.
    \]

    \item \textbf{Recursive step:} Suppose the root of \( {T} \) has children \( T_1, \dots, T_d \), and let \( B \subseteq [d] \) denote the indices of the children attached to the root by blue edges. Then:
    \[
    \mathrm{Wick}_{\mathcal{S}}({T}) :=
    \sum_{\text{pairings } \pi \text{ of } B}
    \left(
        \prod_{\{a, b\} \in \pi} \mathrm{Wick}_{\mathcal{S}}(T_{a} \star T_{b})
    \right)
    \cdot
    \left(
        \prod_{k \in [d] \setminus B} \frac{\lambda}{N} \cdot \sum_{j=1}^N v_{*, i}v_{*,j} \cdot \mathrm{Wick}_{\mathcal{S}}(T_k^{(j)})
    \right),
    \]
    where:
    \begin{itemize}
        \item \( T_{a} \star T_{b} \) denotes the tree obtained by gluing \( T_{a} \) and \( T_{b} \) at the root,
        \item \( T_k^{(j)} \) denotes the tree \( T_k \) with the root labeled by \( j \in [N] \),
    \end{itemize}
\end{itemize}
\end{defn}

\bibliographystyle{alpha}
\bibliography{amp}

\end{document}